\newtheorem{thm}{\textbf Theorem}[section]
\newtheorem{lem}{\textbf Lemma}[section]
\newtheorem{prop}{\textbf Proposition}[section]
\numberwithin{equation}{section}
\newcommand{\be}{\begin{eqnarray}}
\newcommand{\ee}{\end{eqnarray}}
\newcommand{\bes}{\begin{eqnarray*}}
\newcommand{\ees}{\end{eqnarray*}}
\begin{document}
\begin{titlepage}
\title{\bf Existence and stability of periodic solution to the 3D Ginzburg-Landau equation
in weighted Sobolev spaces }

\author{Boling Guo$^{~a}$,  \quad Guoquan Qin$~^{b,*}$
\\[10pt]
\small {$^a $ Institute of Applied Physics and Computational Mathematics,
China Academy of Engineering Physics,}\\
\small {   Beijing,  100088,  P. R. China}\\[5pt]
\small {$^b $ Graduate School of China Academy of Engineering Physics,}\\
\small {  Beijing,  100088,  P. R. China}\\[5pt]
}
\footnotetext
{*~Corresponding author.

~~~~~E-mail addresses: gbl@iapcm.ac.cn(B. Guo), 690650952@qq.com(G. Qin).}


\date{}
\end{titlepage}
\maketitle
\begin{abstract}
We prove the existence of time periodic solution to the
3D Ginzburg-Landau equation
in weighted Sobolev spaces.
We consider the cubic Ginzburg-Landau equation
with an external force $g$ satisfying the oddness condition $g(-x,t)=-g(x,t)$.
The existence of the periodic solution is proved for  small
time-periodic external force.
The stability of the time periodic solution
is also considered.
 \vskip0.1in
\noindent{\bf MSC2010: 35Q56, 35B10, 35B35}

\end{abstract}

~~\noindent{ \textbf{Key words}: Periodic solution; Ginzburg-Landau equation;
Weighted Sobolev space
 }



\section{Introduction}
\setcounter{equation}{0}
In this paper, we are concerned with  the following  Ginzburg-Landau equation
\begin{eqnarray}\label{gl}
\begin{cases}
u_{t}-(1+i)\Delta u=|u|^{2}u+g, \ \ x\in \mathbb{R}^{3}, \ \ t\in \mathbb{R},\\
u(t+T)=u(t),\ \ x\in \mathbb{R}^{3},\ \ t\in \mathbb{R}
\end{cases}
\end{eqnarray}
where $u=u(t,x)$ is a complex function,
and $g=g(x, t)$ is a given
external force.

Equation (\ref{gl}) without the external force $g$
is a special form of the following
generalized derivative  Ginzburg-Landau equation
\begin{eqnarray}\label{glg}
u_{t}-(a+i\alpha)\Delta u+(b+i\beta)|u|^{\sigma}u+
|u|^{\delta}(\vec{\lambda}\cdot\nabla u)+|u|^{\delta-2}u^{2}(\vec{\mu}\cdot\nabla \bar{u})=0.
\end{eqnarray}

The Ginzburg-Landau equation
 has a long history in the research  of phase transitions and superconductivity.
  In fluid mechanical systems,  it is a  generic amplitude equation near the
onset of instabilities that lead to chaotic dynamics.
 It is also of  particularly interest due to it's certain connection with the nonlinear
Schr\"{o}dinger equation.

This paper is devoted to the study of the existence of a periodic in time
solution to equation (\ref{gl}) with the external force $g$ satisfying
\begin{eqnarray}\label{Tg}
\begin{cases}
g(x, t+T)=g(x,t), \quad x\in \mathbb{R}^{3}, t\in \mathbb{R},\text{ $T>0$ is a fixed constant, }\\
g(-x, t)=-g(x,t),\quad x\in \mathbb{R}^{3}, t\in \mathbb{R}.
\end{cases}
\end{eqnarray}

Time periodic problems for the Ginzburg-Landau equation have
been extensively investigated in the last decades.
Guo-Jing-Lu \cite{GuoJingLu1998i,GuoJingLu1998ii}
studied  the following cubic-quintic Ginzburg-Landau
equation
\begin{equation}\label{5gl}
  u_{t}=c_{0}u+(c_{0}+i\epsilon c_{1})u_{xx}
  -(\frac{c_{0}}{2}+i\epsilon c_{2})|u|^{2}u-(\frac{c_{0}}{2}+i\epsilon c_{3})|u|^{4}u.
\end{equation}
The existence and properties of the equilibria  of the perturbed system and
 unperturbed system to (\ref{5gl}) are investigated in \cite{GuoJingLu1998i}.
In \cite{GuoJingLu1998ii}, spatial quasiperiodic solutions to
 (\ref{5gl}) are proved to disappear due to
the perturbation and several types of heteroclinic orbits are proposed by
mathematical and numerical analysis.
The following generalized Ginzburg-Landau equation
\begin{equation}\label{6gl}
  u_{t}=\alpha_{0}u+\alpha_{1}u_{xx}
  +\alpha_{2}|u|^{2}u+\alpha_{3}|u|^{2}u_{x}+\alpha_{4}u^{2}\bar{u}_{x}
  +\alpha_{5}|u|^{4}u+f
\end{equation}
is researched by Guo-Yuan \cite{GuoYuan2000,GuoYuan2001}.
The almost periodic solution
to (\ref{6gl}) is established in Guo-Yuan \cite{GuoYuan2001}
provided  that the external force $f$
is almost periodic in time and the coefficients satisfy some conditions.
The one dimension case was investigated in \cite{GuoYuan2000} and periodic
solutions in Sobolev spaces was established under some conditions on the
coefficients and $f.$
Employing a priori estimate and the Leray-Schauder fixed point theorem,
Guo-Jiang \cite{GuoJiang2001} showed the existence and uniqueness of the time-periodic
solution to the Ginzburg-Landau  equation coupled with the BBM equation.
Guo-Yuan \cite{GuoYuan2002} considered equation (\ref{glg})
plus a damping term $u$ and an external force $f$ and established the time
periodic solution in the setting that  $f$ is  periodic in
time variable and $u$ is periodic in spatial variable.
Equation (\ref{glg}) in 3D plus a damping term $u$ and an external force $f$
but without derivative in the nonlinear terms was investigated
by Li-Guo \cite{LiGuo2004} and periodic solution in Sobolev spaces
was obtained with the help of Faedo-Schauder fix point theorem and the standard compactness
arguments.
For more results concerning the periodic solution to the Ginzburg-Landau equation,
one can refer to \cite{Almog1999,DoelmanGardnerJones1995,PorubovVelarde1999,SirovichNewton1986,Yang1988,Zhan2008}.
For results with respect to other aspects to the Ginzburg-Landau equation,
one can refer to
\cite{DuanHolmes1994,DuanHolmesTiti1992,GuoWang1995,GaoWang2009,HanWangGuo2012,LiGuo2000,
WangGuoZhao2004}.

This paper aims to  look for time periodic solution to equation (\ref{gl})
with external force $g$ satisfying (\ref{Tg})
in weighted Sobolev spaces.
The idea  is due to Kagei-Tsuda \cite{KageiTsuda2015}
and Tsuda \cite{Tsuda2016i,Tsuda2016ii,Tsuda2016iii}.

Before going further,  we
define the following functions
\begin{equation*}
\hat{\chi}_{1}(\xi)=\left\{\begin{array}{ll}{1} & {\left(|\xi| \leq r_{1}\right),} \\ {0} & {\left(|\xi| \geq r_{\infty}\right)}\end{array}\right.
\end{equation*}
and
\begin{equation*}
\hat{\chi}_{\infty}(\xi)=1-\hat{\chi}_{1}(\xi)
\end{equation*}
that satisfy
\begin{equation*}
\hat{\chi}_{j}(\xi) \in C^{\infty}\left(\mathbb{R}^{n}\right)
  \quad and \quad \quad 0 \leq \hat{\chi}_{j} \leq 1 \quad for \quad j=1, \infty,
\end{equation*}
where  $0<r_{1}<r_{\infty}$ will be fixed so that $(\ref{r})$ holds for $|\xi| \leq r_{\infty}$.

Based on the above two smooth functions,  we define
operators $P_{1}$ and $P_{\infty}$ on $L^{2}$ by
\begin{equation*}
P_{j} f=\mathcal{F}^{-1} \hat{\chi}_{j} \mathcal{F}[f] \quad\left(f \in L^{2}, j=1, \infty\right),
\end{equation*}
where
$\mathcal{F}[f]$
and $\hat{f}$  denote the Fourier transform of $f$  that is defined by
\begin{equation*}
\hat{f}(\xi)=\mathcal{F}[f](\xi)=\int_{\mathbb{R}^{3}} f(x) e^{-i x \cdot \xi} \mbox{d} x \quad\left(\xi \in \mathbb{R}^{3}\right)
\end{equation*}
and
$\mathcal{F}^{-1}[f]$ is the
 inverse Fourier transform of $f$ defined by
 \begin{equation*}
\mathcal{F}^{-1}[f](x)=(2 \pi)^{-3} \int_{\mathbb{R}^{3}} f(\xi) e^{i \xi \cdot x} \mbox{d} \xi \quad\left(x \in \mathbb{R}^{3}\right).
\end{equation*}

Note that $P_{1}$ and $P_{\infty}$
 decompose a function into its low and high frequency parts, respectively.

Set $F(u,g) \equiv |u|^{2}u+g$.

Applying  $P_{1}$ and $P_{\infty}$ to equation (\ref{gl}), respectively,
one finds
\begin{eqnarray}\label{gl01}
\begin{cases}
\partial_{t}u_{1}+A u_{1}= F_{1}(u,g),\\
\partial_{t}u_{\infty}+A u_{\infty}= F_{\infty}(u,g),
\end{cases}
\end{eqnarray}
where $A=-(1+i)\Delta,$ $u=u_{1}+u_{\infty}$,
$u_{j}=P_{j}u$ and $F_{j}=P_{j}F$ for $j=1,\infty.$
If we can find a periodic solution $u_{1}(t)=u_{1}(t+T)$
and $u_{\infty}(t)=u_{\infty}(t+T)$, then we see that
$u=u_{1}+u_{\infty}$ is a periodic solution satisfying equation (\ref{gl}).

To find a periodic solution to system (\ref{gl01}),
we first construct its local  solution
satisfying $u_{j}(0)=u_{j}(T)$, then we expand the local solution
to a global periodic one.
The reason we applying $P_{j}$ to  (\ref{gl01}) is that
$|1-e^{(1+i)T\theta}|\geq c\theta$ when $\theta>0$
is sufficiently small and this admits us to obtain
some decay in frequency space when $|\xi|$ is small enough.

To state our main results, let us first introduce some notations.

We use $L^{p}(1\leq p\leq\infty)$ and $H^{k}$ to denote the usual Lebesgue space and
Sobolev space, respectively.
The weighted Lebesgue space $L_{1}^{p}$ is defined by
$$L_{1}^{p}=\{
f\in L^{p}: \|f\|_{L_{1}^{p}}=\|(1+|x|)f\|_{L^{p}}<\infty.
\}$$
The weighted Sobolev space $H_{1}^{k}$ is defined by
$$H_{1}^{k}=\{
f\in H^{k}: \|f\|_{H_{1}^{k}}=\|(1+|x|)f\|_{H^{k}}<\infty.
\}$$

We define solution space to (\ref{gl01}) in which  the low frequency part will live
by
\begin{eqnarray*}\label{spu1}
X(a,b)=\left\{u_{1}: \operatorname{supp}\hat{u}_{1}\subset \{|\xi|\leq r_{\infty}\},
u_{1}(-x)=-u_{1}(x), \|u_{1}\|_{X(a,b)}<\infty
\right\},
\end{eqnarray*}
where
\begin{eqnarray*}\label{spu1}
\|u_{1}\|_{X(a,b)}=\|u_{1}\|_{H^{1}(a,b; L^{2})}
+\|x\nabla u_{1}\|_{H^{1}(a,b; L^{2})}
+\|\partial_{t} u_{1}\|_{L^{2}(a,b; L_{1}^{2})}.
\end{eqnarray*}
The solution space to (\ref{gl01}) in which  the high frequency part will live
is defined by
\begin{eqnarray*}\label{spu2}
Y(a,b)=\left\{u_{\infty}: \operatorname{supp}\hat{u}_{\infty}\subset \{|\xi|\geq r_{1}\},
u_{\infty}(-x)=-u_{\infty}(x), \|u_{\infty}\|_{Y(a,b)}<\infty
\right\},
\end{eqnarray*}
where
\begin{eqnarray*}\label{spu1}
\|u_{\infty}\|_{Y(a,b)}=\|u_{\infty}\|_{C([a,b]; H^{2}_{1})}
+\|u_{\infty}\|_{L^{2}(a,b; H^{3}_{1})}
+\|u_{\infty}\|_{H^{1}(a,b; H^{1}_{1})}.
\end{eqnarray*}
We define space $Z(a,b)\equiv X(a,b)\times Y(a,b)$
equipped with the norm
\begin{eqnarray*}\label{spu3}
\|\{u_{1},u_{\infty}\}\|_{Z(a,b)}=\|u_{1}\|_{X(a,b)}+\|u_{\infty}\|_{Y(a,b)}.
\end{eqnarray*}
We denote  by $C_{per}(\mathbb{R}; X)$ the set of all $T-$periodic
continuous functions with values in $X$ equipped with the norm $\|\cdot\|_{C([0, T]; X)}$
and by $L_{per}^{2}(\mathbb{R}; X)$ the set of all $T-$periodic
locally square integrable functions with values in $X$ equipped with the norm
$\|\cdot\|_{L^{2}(0, T; X)}$, and so on.

Let $B_{X(a, b)}(r)$ be the unit closed ball of $Z(a,b)$ centered at 0 with radius $r$,
that is,
\begin{eqnarray*}
B_{X(a, b)}(r)=\left\{
\{u_{1},u_{\infty}\}\in X(a,b);
\|\{u_{1},u_{\infty}\}\|_{X(a,b)}\leq r
\right\}.
\end{eqnarray*}

Set
\begin{eqnarray*}\label{spg}
[g]\equiv \|g\|_{L^{2}(0, T; L_{1}^{1})}+\|g\|_{L^{2}(0, T; H_{1}^{1})}.
\end{eqnarray*}

Our first  result  concerning the uniquely existence of the periodic solution
to the (\ref{gl}) in weighted Sobolev space
reads
\begin{thm}\label{thm1}
Let the  external force
 $g\in L_{per}^{2}(\mathbb{R}; H^{1}_{1}(\mathbb{R}^{3})\cap L^{1}_{1}(\mathbb{R}^{3}))$
 and (\ref{Tg}) be satisfied by $g$.
There exists constants $\delta_{0}>0$ and $C_{0}>0$ such that if $[g]\leq \delta_{0}$,
then equation (\ref{gl}) admits a periodic solution $u_{per}$ with period $T$
that satisfies $\{u_{1}, u_{\infty}\}\in Z_{per}(\mathbb{R})$,
where $u_{1}=P_{1}u$ and $u_{\infty}=P_{\infty}u$ and there holds
\begin{equation*}
  \|\{u_{1}, u_{\infty}\}\|_{Z(0, T)}\leq C_{0}[g].
\end{equation*}
Define $W=\{u: \{P_{1}u, P_{\infty}u\}\in Z_{per}(\mathbb{R}),
\|\{P_{1}u, P_{\infty}u\}\|_{Z(0, T)}\leq C_{0}\delta_{0}
\}.$
Then the  uniqueness of periodic solution of (\ref{gl}) holds in $W.$
\end{thm}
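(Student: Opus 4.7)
The plan is to realize $\{u_1,u_\infty\}$ as a fixed point of a contraction on a small ball in $Z_{per}(\mathbb{R})$. Given a candidate $v=v_1+v_\infty\in Z_{per}(\mathbb{R})$, set $F(v,g)=|v|^2v+g$ and define $\Phi(\{v_1,v_\infty\})=\{u_1,u_\infty\}$ by solving the two linear time-periodic problems
\begin{equation*}
\partial_t u_j + A u_j = P_j F(v,g),\qquad u_j(0)=u_j(T),\qquad j=1,\infty,
\end{equation*}
with $u_1$ supported in the low-frequency band $\{|\xi|\le r_\infty\}$ and $u_\infty$ in the high-frequency band $\{|\xi|\ge r_1\}$. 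Since $F(v,g)$ is odd in $x$ whenever $v$ and $g$ are, the oddness constraints in the definitions of $X(a,b)$ and $Y(a,b)$ are preserved by $\Phi$. A fixed point of $\Phi$ in $Z_{per}(\mathbb{R})$ produces $u=u_1+u_\infty$ which, by construction, solves (\ref{gl}) and is $T$-periodic.

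The first technical step is to establish linear periodic solvability with the right estimates on each band. Applying Duhamel on $[0,T]$ and imposing $u_j(0)=u_j(T)$, one gets, in Fourier variables,
\begin{equation*}
\hat u_j(0,\xi)=\bigl(1-e^{-(1+i)T|\xi|^2}\bigr)^{-1}\int_0^T e^{-(1+i)(T-s)|\xi|^2}\,\widehat{P_jF}(s,\xi)\,ds .
\end{equation*}
On the high-frequency side $|\xi|\ge r_1$ the factor $(1-e^{-(1+i)T|\xi|^2})^{-1}$ is a smooth bounded multiplier, so standard parabolic smoothing for $e^{-At}$ yields
$\|u_\infty\|_{Y(0,T)}\lesssim \|P_\infty F(v,g)\|_{L^2(0,T;H^1_1)}$. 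On the low-frequency side the same factor blows up like $|\xi|^{-2}$ at the origin, and the usable input is the oddness hypothesis: since $F(v,g)$ is odd, $\widehat{P_1F}(s,0)=0$, and in fact $\widehat{P_1F}(s,\xi)=O(|\xi|)$ near $\xi=0$, which together with the weight $(1+|x|)$ in $L^1_1,H^1_1$ yields the cancellation needed to control both $\hat u_1$ and $\nabla_\xi\hat u_1$. Converting back by Plancherel and $x\leftrightarrow i\nabla_\xi$, this will give
$\|u_1\|_{X(0,T)}\lesssim \|P_1F(v,g)\|_{L^2(0,T;L^1_1)}+\|P_1F(v,g)\|_{L^2(0,T;H^1_1)}$.

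The second step is the nonlinear estimate. In three dimensions the weighted Sobolev space $H^2_1$ is a Banach algebra and embeds into $L^\infty_1$, and together with the spaces appearing in $X(0,T)$ this produces a trilinear bound
\begin{equation*}
[\,|v|^2v\,]\;\le\;C\|\{v_1,v_\infty\}\|_{Z(0,T)}^{3},
\end{equation*}
so combining with the linear estimates yields
$\|\Phi(\{v_1,v_\infty\})\|_{Z(0,T)}\le C_1\bigl([g]+\|\{v_1,v_\infty\}\|_{Z(0,T)}^{3}\bigr)$. Choosing $r=2C_1[g]$ and $\delta_0$ so small that $4C_1^3\delta_0^2\le 1$, the map $\Phi$ sends the closed ball $B_{Z_{per}}(r)$ into itself. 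The analogous estimate on the difference $|v|^2v-|w|^2w$, written as a trilinear form in $v-w,v,w$, turns $\Phi$ into a contraction on that ball, and Banach's theorem produces a unique fixed point $\{u_1,u_\infty\}\in B_{Z_{per}}(r)\subset W$ with $\|\{u_1,u_\infty\}\|_{Z(0,T)}\le 2C_1[g]$, giving both existence and uniqueness in $W$ with $C_0=2C_1$.

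The main obstacle is the low-frequency weighted estimate: one must show that the multiplier $(1-e^{-(1+i)T|\xi|^2})^{-1}$, which carries a nonintegrable $|\xi|^{-2}$ singularity at the origin, is tamed when it acts on symbols that vanish linearly at $\xi=0$ (the Fourier trace of oddness) and, crucially, that this tameness survives one $\xi$-derivative, since the weight $(1+|x|)$ in the target norm corresponds in Fourier to a derivative in $\xi$. Handling this requires a pointwise bound of the form $|\partial_\xi^\alpha (1-e^{-(1+i)T|\xi|^2})^{-1}|\lesssim |\xi|^{-2-|\alpha|}$ for $|\xi|\le r_\infty$, combined with the linear vanishing of $\widehat{P_1F}$ at the origin and a Hardy-type inequality; once this is established, all remaining bounds are routine product estimates in $H^k_1$.
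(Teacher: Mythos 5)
Your proposal is correct and follows essentially the same route as the paper: the same low/high frequency splitting, the same inversion of $1-\mbox{e}^{-TA}$ with the bound $|1-\mbox{e}^{-T(1+i)|\xi|^{2}}|^{-1}\leq C|\xi|^{-2}$ (and $|\xi|^{-3}$ for one $\xi$-derivative) tamed by the oddness cancellation $\hat F_{1}(0)=0$, weighted parabolic/energy estimates for the high-frequency band, the trilinear weighted product estimates, and a small-data fixed-point argument. The only organizational difference is that you run a contraction directly in $Z_{per}(\mathbb{R})$, whereas the paper iterates on $[0,T]$ with the endpoint condition $u_{j}(0)=u_{j}(T)$ and then extends to a global periodic solution by solving shifted initial value problems and gluing via uniqueness; these are equivalent in substance, with your uniqueness claim in the larger ball $W$ following from the same difference estimate applied with radius $C_{0}\delta_{0}$.
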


After obtaining the time periodic solution to (\ref{gl}),
a natural question is to ask whether this periodic solution is stable
under small perturbation.

Suppose that $v_{per}$ is the time periodic solution  to (\ref{gl})
 established in Theorem \ref{thm1}.

Then the perturbation $w=v-v_{per}$ satisfy
\begin{equation}\label{gldiff}
  \partial_{t}w-(1+i)\Delta w=2|v_{per}|^{2}w+v_{per}^{2}\bar{w}+|w|^{2}w+|w|^{2}v_{per}.
\end{equation}
We investigate the initial value problem of  equation (\ref{gldiff}) with initial condition $w(0)=w_{0}.$

The second result concerning with the stability of the time periodic solution
obtained in Theorem \ref{thm1} reads
\begin{thm}\label{thm2}
Let the  external force
 $g\in L_{per}^{2}(\mathbb{R}; H^{1}_{1}(\mathbb{R}^{3})\cap L^{1}_{1}(\mathbb{R}^{3}))$
 and (\ref{Tg}) be satisfied by $g$.
Suppose that $v_{per}$ is the time periodic solution established in Theorem \ref{thm1}
and assume $w_{0}\in H^{2}\cap L^{1}.$
If there is a  constant $\eta$ small enough such that
\begin{equation*}
  \|w_{0}\|_{H^{1}\cap L^{1}}+[g]\leq \eta,
\end{equation*}
then equation (\ref{gldiff}) admits a unique global solution $w\in C([0, T]; H^{2})$
satisfying
\begin{equation*}
  \|\nabla^{l}w(t)\|_{L^{2}}\leq C(1+t)^{-\frac{3}{4}-\frac{l}{2}}
\end{equation*}
for $t\in [0, \infty]$ and $l=0,1.$
\end{thm}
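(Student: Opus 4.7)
The strategy is local well-posedness followed by a time-weighted Duhamel bootstrap, treating the small periodic solution $v_{per}$ from Theorem \ref{thm1} as a coefficient that can be absorbed due to smallness. Writing $A = -(1+i)\Delta$ and using Duhamel's formula
\[
w(t) = e^{-tA} w_0 + \int_0^t e^{-(t-s)A}\bigl[2|v_{per}|^2 w + v_{per}^2 \bar{w} + |w|^2 w + |w|^2 v_{per}\bigr](s)\,\mathrm{d}s,
\]
one first establishes local existence in $C([0,T^*]; H^2)$ via contraction mapping, exploiting that $e^{-tA}$ is a bounded analytic semigroup whose Gaussian-type kernel (the real part of the exponent is $-|x|^2/8t$) gives the $L^p$-$L^q$ smoothing $\|\nabla^l e^{-tA} f\|_{L^q} \leq C t^{-\frac{3}{2}(1/p-1/q)-l/2}\|f\|_{L^p}$, together with the fact that $v_{per}$ is uniformly bounded in $H^2$ by $C[g]$ as a consequence of Theorem \ref{thm1} (Bernstein for the low-frequency part $v_1$ and the embedding $H^2_1 \hookrightarrow H^2$ for the high-frequency part $v_\infty$).

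To derive the decay, I would introduce the time-weighted norm
\[
M(t) = \sup_{0 \leq s \leq t} \Bigl[(1+s)^{3/4}\|w(s)\|_{L^2} + (1+s)^{5/4}\|\nabla w(s)\|_{L^2}\Bigr]
\]
and seek a bootstrap inequality $M(t) \leq C\bigl(\eta + [g]\, M(t) + M(t)^2 + M(t)^3\bigr)$, coupled with a companion $H^2$ energy bound that prevents finite-time blow-up. The linear term contributes $\|\nabla^l e^{-tA} w_0\|_{L^2} \leq C\|w_0\|_{H^1\cap L^1}(1+t)^{-3/4-l/2}$ directly from the semigroup estimate. For the cubic self-interaction, Gagliardo-Nirenberg yields $\||w|^2 w\|_{L^1} \leq C\|w\|_{L^2}^{3/2}\|\nabla w\|_{L^2}^{3/2} \leq C M(t)^3 (1+s)^{-3}$, and after splitting $\int_0^t = \int_0^{t/2} + \int_{t/2}^t$ with $L^1\to L^2$ and $L^2\to L^2$ semigroup bounds respectively, one extracts the expected $M(t)^3(1+t)^{-3/4-l/2}$ contribution. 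The mixed term $|w|^2 v_{per}$ is controlled analogously using $\|v_{per}\|_{L^\infty} \leq C[g]$.

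The principal obstacle is the linear-in-$w$ coupling $2|v_{per}|^2 w + v_{per}^2 \bar{w}$, because $v_{per}$ is only time-periodic, so a naive Duhamel bound diverges. My plan to overcome this is to exploit \emph{both} the smallness and the spatial decay of $v_{per}$: since $v_{per} \in H^2_1$, the embedding $H^2(\mathbb{R}^3) \hookrightarrow L^\infty$ applied to $(1+|x|) v_{per}$ furnishes the pointwise bound $|v_{per}(x,t)| \leq C[g]/(1+|x|)$ uniformly in $t$. Combined with Hardy's inequality one obtains $\int |v_{per}|^2 |w|^2\,\mathrm{d}x \leq C[g]^2 \bigl(\|w\|_{L^2}^2 + \|\nabla w\|_{L^2}^2\bigr)$, which produces only an $[g]^2$-factor in the parallel $H^2$ energy estimate. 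For the $L^1$-side of the Duhamel bootstrap one uses bounds such as $\||v_{per}|^2 w\|_{L^{3/2}} \leq C\|v_{per}\|_{L^6}^2 \|w\|_{L^2}$ with $\|v_{per}\|_{L^6} \leq C\|v_{per}\|_{H^1} \leq C[g]$, together with the semigroup estimate $\|e^{-(t-s)A}\|_{L^{3/2}\to L^2}\leq C(t-s)^{-1/2}$, so that this contribution is of order $[g]^2 M(t)$ and is absorbed into the left-hand side once $[g]$ is small. Closing the bootstrap then gives $M(t) + \|w(t)\|_{H^2} \leq C\eta$ on the maximal interval of existence; the standard continuation principle extends the solution globally and the claimed decay follows immediately from the definition of $M$.
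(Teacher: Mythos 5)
Your overall plan (time-weighted bootstrap treating $v_{per}$ as a small coefficient) is in the right spirit, but the way you handle the linear coupling $2|v_{per}|^{2}w+v_{per}^{2}\bar w$ does not close the bootstrap, and this is precisely where the paper's argument is structured differently. With your $L^{3/2}$ estimate, the Duhamel contribution to $\|w(t)\|_{L^{2}}$ is bounded by $C[g]^{2}M(t)\int_{0}^{t}(t-s)^{-1/4}(1+s)^{-3/4}\,\mathrm{d}s$ (note $\|e^{-(t-s)A}\|_{L^{3/2}\to L^{2}}\le C(t-s)^{-1/4}$, not $(t-s)^{-1/2}$), and this integral is only $O(1)$: the portion $s\in[t/2,t]$ alone contributes about $t^{3/4}(1+t)^{-3/4}$. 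Hence $(1+t)^{3/4}\|w(t)\|_{L^{2}}$ acquires an unbounded factor and the inequality $M(t)\le C(\eta+[g]M(t)+\cdots)$ is not obtained even for $l=0$; smallness of $[g]$ cannot repair a wrong time exponent. The situation is worse for $l=1$: without a spectral gap the best available bound near $s=t$ is $\|\nabla e^{-(t-s)A}\|_{L^{2}\to L^{2}}\le C(t-s)^{-1/2}$ against an integrand decaying at most like $[g]^{2}\|\nabla w(s)\|_{L^{2}}\sim(1+s)^{-5/4}$, and $\int_{t/2}^{t}(t-s)^{-1/2}(1+s)^{-5/4}\,\mathrm{d}s\sim(1+t)^{-3/4}$, short of the required $(1+t)^{-5/4}$. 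A secondary issue: your pointwise bound $|v_{per}(x,t)|\le C[g]/(1+|x|)$ needs $\|(1+|x|)v_{per}\|_{H^{2}}$, which Theorem \ref{thm1} does not provide for the low-frequency part $v_{1}$ (the $X$-norm only controls quantities like $\|x\nabla v_{1}\|_{L^{2}}$); the paper only ever uses $\|(1+|x|)v_{per}\|_{L^{2}}$-type quantities.

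The paper closes these gaps by retaining the frequency decomposition in the stability analysis. For $w_{1}=P_{1}w$ it does use Duhamel, but with the low-frequency kernel bound $\|\nabla^{l}e^{-tA}P_{1}f\|_{L^{2}}\le C(1+t)^{-3/4-l/2}\|f\|_{L^{1}}$, which has no short-time singularity even for $l=1$, combined with the weighted estimate $\||v_{per}|^{2}w\|_{L^{1}}\le\|v_{per}\|_{L^{\infty}}\|(1+|x|)v_{per}\|_{L^{2}}\|w/(1+|x|)\|_{L^{2}}\le C[g]^{2}\|\nabla w\|_{L^{2}}\sim(1+s)^{-5/4}$ (Hardy applied inside the Duhamel integrand, not only in an energy estimate), so the time integral converges at the right rate (Lemma \ref{lemstability1}). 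For $w_{\infty}=P_{\infty}w$ it avoids Duhamel entirely: the high-frequency Poincar\'e inequality $\|f_{\infty}\|_{L^{2}}\le C\|\nabla f_{\infty}\|_{L^{2}}$ of Lemma \ref{lem404} turns the $H^{1}$ energy identity into a differential inequality with exponential damping (Lemma \ref{lemstability2}), whose integration yields the $(1+t)^{-5/4}$ decay of $\|w_{\infty}\|_{H^{1}}$; the two pieces are then combined through a continuity/contradiction argument. To salvage your scheme you would need to incorporate this splitting, or an equivalent spectral-gap mechanism for the high frequencies, so that the $v_{per}$-linear term is only ever integrated against a kernel or weight with genuine time decay.
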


The rest of the paper is organized as follows.
In section 2, we collect some useful Lemmas.
In section 3, we deal with the low frequency  part of (\ref{gl01}).
With help of weighted energy method, the high frequency  part of (\ref{gl01}) is treated in section 4.
Section 5 devotes to estimate the nonlinear and nonhomogeneous terms of (\ref{gl01}).
We finally give the proof of Theorems \ref{thm1} and
\ref{thm2} in Sections 6 and 7, respectively.

\section{Preliminaries}

The following useful Lemmas can be found in \cite{KageiTsuda2015}.

\begin{lem}\label{lem201}
We have
\begin{equation*}
\|f\|_{L^{\infty}(\mathbb{R}^{3})} \leq C\|\nabla f\|_{H^{1}(\mathbb{R}^{3})}
\end{equation*}
for $f \in H^{2}(\mathbb{R}^{3})$.
\end{lem}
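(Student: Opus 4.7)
\begin{pf}[Plan for Lemma 2.1]
The plan is to prove this via the Fourier inversion formula combined with a low/high frequency splitting; the inequality is essentially a Gagliardo--Nirenberg type estimate that exploits the special dimension $n=3$ in which $|\xi|^{-2}$ is locally integrable at the origin but $|\xi|^{-4}$ is integrable at infinity.

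First I would start from the pointwise bound
\begin{equation*}
 |f(x)| \leq (2\pi)^{-3}\int_{\mathbb{R}^{3}} |\hat{f}(\xi)|\,\md\xi,
\end{equation*}
valid for any $f \in H^{2}(\mathbb{R}^{3})$ since $\hat{f} \in L^{1}$ (which itself will follow a posteriori from the same estimate). Then I would split the integral at radius $R>0$ to be chosen,
\begin{equation*}
 \int_{\mathbb{R}^{3}} |\hat{f}(\xi)|\,\md\xi
 = \int_{|\xi|\leq R}|\xi|^{-1}\bigl(|\xi||\hat{f}(\xi)|\bigr)\,\md\xi
 + \int_{|\xi|\geq R}|\xi|^{-2}\bigl(|\xi|^{2}|\hat{f}(\xi)|\bigr)\,\md\xi,
\end{equation*}
and apply Cauchy--Schwarz to each piece. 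Since $\int_{|\xi|\leq R}|\xi|^{-2}\md\xi = CR$ and $\int_{|\xi|\geq R}|\xi|^{-4}\md\xi = CR^{-1}$ in three dimensions, Plancherel gives
\begin{equation*}
 \int_{\mathbb{R}^{3}} |\hat{f}(\xi)|\,\md\xi
 \leq C\bigl(R^{1/2}\|\nabla f\|_{L^{2}} + R^{-1/2}\|\nabla^{2} f\|_{L^{2}}\bigr).
\end{equation*}

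Next I would optimize in $R$ by choosing $R=\|\nabla^{2}f\|_{L^{2}}/\|\nabla f\|_{L^{2}}$ (assuming both are nonzero; the degenerate cases are handled separately), which yields the Gagliardo--Nirenberg bound
\begin{equation*}
 \|f\|_{L^{\infty}(\mathbb{R}^{3})}
 \leq C\|\nabla f\|_{L^{2}}^{1/2}\|\nabla^{2} f\|_{L^{2}}^{1/2}.
\end{equation*}
Finally, applying the AM--GM inequality $\sqrt{ab}\leq \tfrac{1}{2}(a+b)$ and the definition of the Sobolev norm gives
\begin{equation*}
 \|f\|_{L^{\infty}(\mathbb{R}^{3})}
 \leq C\bigl(\|\nabla f\|_{L^{2}}+\|\nabla^{2}f\|_{L^{2}}\bigr)
 \leq C\|\nabla f\|_{H^{1}(\mathbb{R}^{3})},
\end{equation*}
as required. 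There is no substantial obstacle here since the statement is a well-known embedding; the only technical point one must be careful about is the optimization in $R$, and observing that the borderline case $\nabla f\equiv 0$ forces $f\equiv 0$ (so the inequality holds trivially), while $\nabla^{2}f\equiv 0$ combined with $f\in H^{2}$ again forces $f\equiv 0$.
\end{pf}
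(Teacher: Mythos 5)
Your argument is correct: the low/high frequency splitting of $\int|\hat{f}|\,\md\xi$, Cauchy--Schwarz with the weights $|\xi|^{-1}$ and $|\xi|^{-2}$ (both admissible in dimension three), optimization in $R$, and the final AM--GM step give exactly the Gagliardo--Nirenberg bound $\|f\|_{L^{\infty}}\leq C\|\nabla f\|_{L^{2}}^{1/2}\|\nabla^{2}f\|_{L^{2}}^{1/2}\leq C\|\nabla f\|_{H^{1}}$, and your handling of the degenerate cases and of the a posteriori integrability of $\hat{f}$ is fine. Note that the paper itself does not prove this lemma; it simply quotes it from the reference of Kagei and Tsuda, so your Fourier-splitting derivation supplies a standard, self-contained proof of precisely the inequality the paper takes for granted.
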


\begin{lem}\label{lem403}
For  $k$ being a nonnegative integer and  $2 \leq p \leq \infty,$
suppose that a function $f$  satisfies $\operatorname{supp}\hat{f}\subset \{|\xi|\leq r_{\infty}\}.$
Let  $f_{1}=P_{1}f.$
Then it finds
\begin{eqnarray*}
\left\|\nabla^{k} f_{1}\right\|_{L^{2}}+\left\|f_{1}\right\|_{L^{p}} \leq C\left\|f_{1}\right\|_{L^{2}} \quad\left(f \in L^{2}\right),\\
\left\|f_{1}\right\|_{H_{1}^{k}} \leq C\left\|f_{1}\right\|_{L_{1}^{2}}\quad\left(f \in L^{2} \cap L_{1}^{2}\right),\\
\left\|\nabla f_{1}\right\|_{H_{1}^{k}} \leq C(\left\|f_{1}\right\|_{L^{2}}
+\|x\nabla f_{1}\|_{L^{2}}) \quad\left(f \in L^{2}, x\nabla f\in L^{2} \right),\\
\left\|f_{1}\right\|_{L_{1}^{2}}+\left\|f_{1}\right\|_{L^{2}}
+\|x\nabla f_{1}\|_{L^{2}} \leq C\left\|f_{1}\right\|_{L_{1}^{1}} \quad\left(f \in L^{2} \cap L_{1}^{1}\right).
\end{eqnarray*}
\end{lem}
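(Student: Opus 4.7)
\textbf{Proof Plan for Lemma \ref{lem403}.}

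The plan is to reduce all four assertions to two ingredients: (i) classical Bernstein inequalities coming from the compact support of $\hat f_1$; and (ii) a reproducing--kernel identity combined with the elementary weight splitting $1+|x|\le(1+|x-y|)(1+|y|)$. Since $\hat f_1=\hat\chi_1\hat f$ is supported in $\{|\xi|\le r_\infty\}$, I would fix once and for all a Schwartz function $\psi$ with $\hat\psi\equiv 1$ on $\{|\xi|\le r_\infty\}$ and $\hat\psi$ compactly supported, so that $\partial^\alpha f_1=(\partial^\alpha\psi)*f_1$ for every multi-index $\alpha$. The functions $\partial^\alpha\psi$ and their weighted versions $(1+|\cdot|)^m\partial^\alpha\psi$ all lie in $L^1\cap L^2$, and this is essentially the only analytic input.

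For the first estimate, $\|\nabla^k f_1\|_{L^2}\le r_\infty^k\|f_1\|_{L^2}$ is immediate from Plancherel applied to $\widehat{\nabla^k f_1}$ on $\{|\xi|\le r_\infty\}$, and $\|f_1\|_{L^p}\le C\|f_1\|_{L^2}$ for $2\le p\le\infty$ follows from Young applied to $f_1=\psi*f_1$ with $\psi\in L^r$, where $1+1/p=1/r+1/2$. For the second estimate, I would bound $\|(1+|x|)\partial^\alpha f_1\|_{L^2}$ for each $|\alpha|\le k$ by writing $\partial^\alpha f_1=(\partial^\alpha\psi)*f_1$ and using $1+|x|\le(1+|x-y|)(1+|y|)$ inside the convolution, so that Young's inequality yields
\[
\|(1+|x|)\partial^\alpha f_1\|_{L^2}\le\|(1+|\cdot|)\partial^\alpha\psi\|_{L^1}\,\|(1+|x|)f_1\|_{L^2};
\]
summing over $|\alpha|\le k$ gives (b), after replacing $1+|x|$ by the equivalent smooth weight $\langle x\rangle=(1+|x|^2)^{1/2}$ so that commutators with $\partial^\alpha$ are well defined. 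Estimate (c) then follows by applying (b) to $\nabla f_1$, which still has Fourier support in $\{|\xi|\le r_\infty\}$: this yields $\|\nabla f_1\|_{H^k_1}\le C\|\nabla f_1\|_{L^2_1}\le C(\|\nabla f_1\|_{L^2}+\|x\nabla f_1\|_{L^2})$, and Bernstein bounds $\|\nabla f_1\|_{L^2}$ by $\|f_1\|_{L^2}$.

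For the last estimate, writing $f_1=\psi*f_1$ and using Young's $L^1\times L^2\to L^2$ inequality gives $\|f_1\|_{L^2}\le\|\psi\|_{L^2}\|f_1\|_{L^1}$ at once. To control $\|xf_1\|_{L^2}$ and $\|x\nabla f_1\|_{L^2}$, I would use the splitting
\[
x f_1(x)=\bigl((\cdot)\psi\bigr)*f_1(x)+\psi*\bigl((\cdot)f_1\bigr)(x),
\]
and the analogous identity with $\psi$ replaced by $\nabla\psi$. Since $\psi,\nabla\psi,(\cdot)\psi,(\cdot)\nabla\psi$ all lie in $L^2$, Young bounds each term by $C(\|f_1\|_{L^1}+\|(\cdot)f_1\|_{L^1})\le C\|f_1\|_{L^1_1}$. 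The main technical obstacle throughout is the non-smoothness of the weight $1+|x|$ at the origin, which complicates commutator computations in (b); I would sidestep this by working with $\langle x\rangle$ in place of $1+|x|$ and verifying norm equivalence, after which all commutator estimates go through cleanly.
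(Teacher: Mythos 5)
Your proposal is correct. Note that the paper itself does not prove this lemma at all: it is quoted verbatim from the reference \cite{KageiTsuda2015} ("The following useful Lemmas can be found in..."), so there is no in-paper argument to compare against; your reproducing-kernel scheme ($f_{1}=\psi*f_{1}$ with $\hat\psi\equiv1$ on $\{|\xi|\le r_{\infty}\}$, Bernstein via Plancherel and Young, and the weight splitting $1+|x|\le(1+|x-y|)(1+|y|)$) is the standard route and is essentially the argument carried out in that reference using the Schwartz kernel of the low-frequency cut-off. All four estimates check out: Plancherel on $\{|\xi|\le r_{\infty}\}$ and Young with $\psi\in L^{r}$, $1+1/p=1/r+1/2$, give the first line with a constant uniform in $p\in[2,\infty]$; the weighted Young bound $\|(1+|x|)\partial^{\alpha}f_{1}\|_{L^{2}}\le\|(1+|\cdot|)\partial^{\alpha}\psi\|_{L^{1}}\|(1+|\cdot|)f_{1}\|_{L^{2}}$ gives the second; applying it to $\nabla f_{1}$ (whose Fourier support is still in the ball) plus Bernstein gives the third; and the identity $xf_{1}=((\cdot)\psi)*f_{1}+\psi*((\cdot)f_{1})$, together with its analogue for $\nabla\psi$, gives the fourth. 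The one genuinely delicate point is the passage from $\sum_{|\alpha|\le k}\|(1+|x|)\partial^{\alpha}f_{1}\|_{L^{2}}$ to the norm $\|(1+|x|)f_{1}\|_{H^{k}}$ actually used in the paper, since $\partial^{\alpha}$ must land on the weight; your replacement of $1+|x|$ by the equivalent smooth weight $(1+|x|^{2})^{1/2}$, whose derivatives of order $\ge1$ are bounded, together with Bernstein control of the resulting lower-order unweighted terms, handles this adequately, so no gap remains.
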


\begin{lem}\label{lem404}
(i)\ \ For  $k$ being a nonnegative integer, one has
 $\|P_{\infty}f\|_{H^{k}}\leq C\|f\|_{H^{k}}.$

(ii)\ \
Suppose that a function $f_{\infty}$  satisfies $\operatorname{supp}\hat{f}_{\infty}\subset \{|\xi|\geq r_{1}\}$ and $f_{\infty} \in H^{1},$
then it finds
\begin{equation*}
\left\|f_{\infty}\right\|_{L^{2}} \leq C\left\|\nabla f_{\infty}\right\|_{L^{2}}.
\end{equation*}
\end{lem}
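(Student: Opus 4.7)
Both parts of this lemma are pure Fourier-multiplier facts, and the plan is to prove them entirely by Plancherel's theorem, reducing the statements to elementary pointwise bounds on the symbols.

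For part (i), I would observe that $P_\infty$ is the Fourier multiplier with symbol $\hat{\chi}_\infty = 1-\hat{\chi}_1$, which by construction lies in $C^\infty(\mathbb{R}^n)$ and satisfies $0\leq \hat{\chi}_\infty\leq 1$. Since Fourier multiplication commutes with $\nabla^\alpha$ for any multi-index $\alpha$, I would write, for $|\alpha|\leq k$,
\begin{equation*}
\|\nabla^\alpha P_\infty f\|_{L^2}^2 = \|\mathcal{F}[\nabla^\alpha P_\infty f]\|_{L^2}^2 = \int_{\mathbb{R}^3} |\xi^\alpha|^2 |\hat{\chi}_\infty(\xi)|^2 |\hat{f}(\xi)|^2\, d\xi \leq \|\nabla^\alpha f\|_{L^2}^2,
\end{equation*}
and then sum over $|\alpha|\leq k$ to conclude. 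This is completely routine and I would expect no obstacles.

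For part (ii), the plan is to exploit the low-frequency cutoff. Since $\hat{f}_\infty$ is supported in $\{|\xi|\geq r_1\}$, the inequality $1\leq |\xi|^2/r_1^2$ holds throughout the support, so by Plancherel
\begin{equation*}
\|f_\infty\|_{L^2}^2 = \int_{|\xi|\geq r_1} |\hat{f}_\infty(\xi)|^2\, d\xi \leq \frac{1}{r_1^2}\int_{\mathbb{R}^3} |\xi|^2 |\hat{f}_\infty(\xi)|^2\, d\xi = \frac{1}{r_1^2}\|\nabla f_\infty\|_{L^2}^2,
\end{equation*}
which gives the claim with $C=1/r_1$. This is a frequency-space Poincaré inequality; the only nontrivial ingredient is that $r_1>0$, which is guaranteed by the construction of $\hat{\chi}_1$.

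There is no real obstacle in either part — the whole content of the lemma is the observation that the multiplier symbols and the frequency support make Plancherel directly applicable. The only thing worth being careful about is making sure one uses the equivalence $\|f\|_{H^k}^2 \sim \sum_{|\alpha|\leq k}\|\nabla^\alpha f\|_{L^2}^2$ consistently so that the constants in part (i) really are independent of $f$; I would state the proof cleanly enough that the constant $C$ in part (i) can even be taken to equal $1$.
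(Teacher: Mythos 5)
Your proof is correct, and it is the standard argument: part (i) follows from Plancherel and the bound $0\leq\hat{\chi}_\infty\leq 1$ on the multiplier, and part (ii) is the frequency-space Poincar\'e inequality using $1\leq|\xi|^2/r_1^2$ on the support of $\hat{f}_\infty$. The paper itself gives no proof of this lemma (it simply cites Kagei--Tsuda), and your Plancherel-based argument is exactly the reasoning behind the cited result, so there is nothing to correct.
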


\begin{lem}\label{lem405}
For  $\chi$ being a Schwartz function on $\mathbb{R}^{n}$, there holds
\begin{equation*}
\left\||x| (\chi*f)\right\|_{L^{2}} \leq
C\left\|f\right\|_{L^{2}}+C\left\|x f\right\|_{L^{2}}.
\end{equation*}
\end{lem}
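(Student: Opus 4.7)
The plan is to use the triangle inequality $|x|\le |x-y|+|y|$ inside the convolution integral to split the weight between $\chi$ and $f$. Writing out the convolution,
\begin{equation*}
(\chi * f)(x)=\int_{\mathbb{R}^{n}}\chi(x-y)f(y)\,\mbox{d}y,
\end{equation*}
I would estimate pointwise
\begin{equation*}
|x|\,|(\chi * f)(x)|\le \int_{\mathbb{R}^{n}}|x-y|\,|\chi(x-y)|\,|f(y)|\,\mbox{d}y+\int_{\mathbb{R}^{n}}|\chi(x-y)|\,|y|\,|f(y)|\,\mbox{d}y.
\end{equation*}
The right-hand side has the form $(\psi *|f|)(x)+(|\chi|*h)(x)$ with $\psi(z)=|z||\chi(z)|$ and $h(y)=|y||f(y)|$.

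Next, I would apply Young's convolution inequality $\|g*f\|_{L^{2}}\le \|g\|_{L^{1}}\|f\|_{L^{2}}$ to each term, obtaining
\begin{equation*}
\||x|(\chi *f)\|_{L^{2}}\le \|\psi\|_{L^{1}}\|f\|_{L^{2}}+\|\chi\|_{L^{1}}\|xf\|_{L^{2}}.
\end{equation*}
Since $\chi$ is Schwartz, both $\|\chi\|_{L^{1}}$ and $\|\,|\cdot|\chi\,\|_{L^{1}}=\|\psi\|_{L^{1}}$ are finite (indeed, $(1+|z|)^{n+2}|\chi(z)|$ is bounded, so $|z||\chi(z)|$ is integrable). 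Absorbing these constants into a single $C$ yields the claimed bound.

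There is essentially no real obstacle here; the only point to be careful about is justifying the finiteness of $\|\psi\|_{L^{1}}$, which is immediate from the rapid decay of Schwartz functions. The argument is a soft one, relying only on the triangle inequality, Young's inequality, and the Schwartz property of $\chi$.
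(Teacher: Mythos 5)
Your argument is correct: the pointwise splitting $|x|\le|x-y|+|y|$ inside the convolution followed by Young's inequality, with finiteness of $\|\,|\cdot|\chi\|_{L^{1}}$ and $\|\chi\|_{L^{1}}$ from the Schwartz decay, gives exactly the stated bound. The paper itself offers no proof of this lemma (it is quoted from the cited reference of Kagei and Tsuda), and your argument is the standard one used there, so there is nothing further to reconcile.
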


\begin{lem}\label{lem406}
Let $f_{\infty}$ be satisfying $\operatorname{supp}\hat{f}_{\infty}\subset \{|\xi|\geq r_{1}\}$
and $\|f_{\infty}\|_{H^{1}_{1}}<\infty.$
Then there is a  constant $C>0$ independent of $f_{\infty}$ such
that
\begin{equation*}
\frac{r_{1}^{2}}{2}\left\||x| f_{\infty}\right\|_{L^{2}}^{2} \leq
\left\||x| \nabla f_{\infty}\right\|_{L^{2}}^{2}+C\left\|f_{\infty}\right\|_{L^{2}}^{2}.
\end{equation*}
\end{lem}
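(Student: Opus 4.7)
The plan is to reduce Lemma \ref{lem406} to the Poincaré-type estimate in Lemma \ref{lem404}(ii) by showing that multiplication by a coordinate $x_j$ preserves the low-frequency cutoff property. The key observation is that in Fourier variables $\widehat{x_j f_\infty}(\xi) = i\partial_{\xi_j}\hat{f}_\infty(\xi)$, and taking distributional derivatives cannot enlarge the support. Since $\hat{f}_\infty$ vanishes on the open set $\{|\xi|<r_1\}$ by hypothesis, so does $\partial_{\xi_j}\hat{f}_\infty$ for each $j$, and therefore $\mathrm{supp}\,\widehat{x_jf_\infty}\subset\{|\xi|\geq r_1\}$.

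Next, under the hypothesis $\|f_\infty\|_{H^1_1}<\infty$, the function $x_j f_\infty$ belongs to $H^1(\mathbb{R}^3)$: indeed $x_j f_\infty\in L^2$, and $\partial_k(x_j f_\infty)=\delta_{jk}f_\infty+x_j\partial_k f_\infty\in L^2$. Combined with Step 1, Lemma \ref{lem404}(ii) applies to each $x_j f_\infty$, yielding
\begin{equation*}
r_1^2\|x_j f_\infty\|_{L^2}^2\leq \|\nabla(x_j f_\infty)\|_{L^2}^2\qquad(j=1,2,3).
\end{equation*}

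I would then expand using the product rule $\partial_k(x_j f_\infty)=\delta_{jk}f_\infty+x_j\partial_k f_\infty$ and invoke the elementary inequality $|a+b|^2\leq 2|a|^2+2|b|^2$ pointwise to obtain
\begin{equation*}
\sum_{j,k=1}^{3}\|\partial_k(x_j f_\infty)\|_{L^2}^2\leq 2\sum_{j,k=1}^{3}\|\delta_{jk}f_\infty\|_{L^2}^2+2\sum_{j,k=1}^{3}\|x_j\partial_k f_\infty\|_{L^2}^2=6\|f_\infty\|_{L^2}^2+2\||x|\nabla f_\infty\|_{L^2}^2.
\end{equation*}
Summing the estimate from Step 2 over $j$ and dividing by $2$ yields the claimed bound with $C=3$.

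The only genuinely subtle point is Step 1: one must be careful that vanishing of $\hat{f}_\infty$ on an open set rigorously transfers to vanishing of its distributional derivatives there, so that the Poincaré estimate of Lemma \ref{lem404}(ii) is legitimately applicable to $x_j f_\infty$. The rest is a direct product-rule expansion and an application of a single Cauchy–Schwarz-type split, with the constant $C$ arising solely from the cross-term $2\|f_\infty\|^2$ produced by the derivative of the weight.
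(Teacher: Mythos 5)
Your argument is correct, and it is worth noting that the paper itself offers no proof of this lemma at all -- it is simply quoted from \cite{KageiTsuda2015}. Your route (show $\operatorname{supp}\widehat{x_j f_\infty}\subset\{|\xi|\ge r_1\}$ via $\widehat{x_jf_\infty}=i\partial_{\xi_j}\hat f_\infty$ and the fact that distributional differentiation does not enlarge support, then apply the high-frequency Poincar\'e estimate to $x_jf_\infty$ and expand $\partial_k(x_jf_\infty)=\delta_{jk}f_\infty+x_j\partial_kf_\infty$) is essentially the physical-space rendering of the standard Fourier-side computation with $\nabla_\xi(\xi_k\hat f_\infty)=e_k\hat f_\infty+\xi_k\nabla_\xi\hat f_\infty$, and it reproduces the stated constant $\tfrac{r_1^2}{2}$ with $C=3$. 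The one point you should make explicit: Lemma \ref{lem404}(ii) as stated only asserts $\|g\|_{L^2}\le C\|\nabla g\|_{L^2}$ with an unspecified constant, whereas your Step 2 uses the quantitative form $r_1^2\|x_jf_\infty\|_{L^2}^2\le\|\nabla(x_jf_\infty)\|_{L^2}^2$; this sharper version is needed to land exactly on the prefactor $\tfrac{r_1^2}{2}$, and it does hold, but you should justify it in one line by Plancherel, since $|\widehat{\nabla g}(\xi)|=|\xi||\hat g(\xi)|\ge r_1|\hat g(\xi)|$ on $\operatorname{supp}\hat g\subset\{|\xi|\ge r_1\}$. With that line added (and your observation that $\|f_\infty\|_{H^1_1}<\infty$ gives $x_jf_\infty\in H^1$, which you verify correctly), the proof is complete and self-contained.
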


The following Hardy  inequality can be found in \cite{Evans1998,Okita2014}.
\begin{lem}\label{lemhardy}
\begin{equation}
\left\|\frac{u}{|x|}\right\|_{L^{2}(\mathbb{R}^{3})}\leq C\|\nabla u\|_{L^{2}(\mathbb{R}^{3})}
\end{equation}
for $u\in H^{1}(\mathbb{R}^{3}).$
\end{lem}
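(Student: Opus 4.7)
The plan is to deduce this three-dimensional weighted inequality from the special pointwise identity $\nabla\cdot(x/|x|^2) = 1/|x|^2$ that holds in $\mathbb{R}^3$, combined with integration by parts and Cauchy--Schwarz. I would first establish the inequality on a dense subset of $H^1(\mathbb{R}^3)$ and then extend by continuity.

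On $u \in C_c^\infty(\mathbb{R}^3\setminus\{0\})$ the computation is a one-liner: a direct differentiation gives $\nabla\cdot(x/|x|^2) = 3/|x|^2 - 2/|x|^2 = 1/|x|^2$, so
\begin{equation*}
\int_{\mathbb{R}^3}\frac{|u|^2}{|x|^2}\,dx = \int_{\mathbb{R}^3}|u|^2\,\nabla\cdot\Big(\frac{x}{|x|^2}\Big)\,dx = -2\,\mathrm{Re}\int_{\mathbb{R}^3}\bar u\,\nabla u\cdot\frac{x}{|x|^2}\,dx,
\end{equation*}
where no boundary contribution appears because the support avoids the origin and is compact. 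The pointwise bound $|x\cdot\nabla u|/|x|^2 \le |\nabla u|/|x|$ together with Cauchy--Schwarz yields $\|u/|x|\|_{L^2}^2 \le 2\,\|u/|x|\|_{L^2}\,\|\nabla u\|_{L^2}$, and dividing through gives the inequality with constant $C=2$.

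To pass to general $u \in H^1(\mathbb{R}^3)$, I would mollify $u$ and multiply by a smooth radial cutoff $\chi_\epsilon$ that vanishes on $\{|x|\le\epsilon\}$, equals one on $\{|x|\ge 2\epsilon\}$, and satisfies $|\nabla\chi_\epsilon|\lesssim 1/\epsilon$. The resulting approximants lie in $C_c^\infty(\mathbb{R}^3\setminus\{0\})$ and converge to $u$ in $H^1$; the key quantitative step is the estimate $\int|u\,\nabla\chi_\epsilon|^2\,dx \lesssim \epsilon$, which uses the three-dimensional volume gain of the annulus $\{\epsilon\le|x|\le 2\epsilon\}$ combined with the Sobolev embedding $H^1(\mathbb{R}^3)\hookrightarrow L^6(\mathbb{R}^3)$ via H\"older. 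Applying the inequality to the approximants and extracting a subsequence with $u_n/|x|\to u/|x|$ a.e.\ allows Fatou's lemma to transfer the bound to $u$.

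The principal subtlety is this density step: in dimension at most two the singleton $\{0\}$ has positive $H^1$-capacity, one cannot cut off near the origin without destroying $H^1$ convergence, and the unweighted Hardy inequality in fact fails. Since the lemma is quoted from standard references (Evans and \cite{Okita2014}), I would simply invoke the classical capacity/cut-off argument rather than reproduce it in detail.
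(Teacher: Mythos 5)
Your argument is essentially correct, but note that the paper itself does not prove this lemma at all: it is simply quoted from the references (Evans; Okita), so any proof you give is "extra" relative to the paper. What you write is the classical divergence/integration-by-parts proof, and its core is sound: the identity $\nabla\cdot\bigl(x/|x|^{2}\bigr)=1/|x|^{2}$ in $\mathbb{R}^{3}$, integration by parts on $C_{c}^{\infty}(\mathbb{R}^{3}\setminus\{0\})$, Cauchy--Schwarz, and division by $\|u/|x|\|_{L^{2}}$ give the inequality with $C=2$ (the sharp constant in three dimensions), and the cutoff-plus-mollification density step together with Fatou transfers it to all of $H^{1}(\mathbb{R}^{3})$. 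Two small inaccuracies are worth fixing. First, the quantitative claim $\int|u\,\nabla\chi_{\epsilon}|^{2}\,dx\lesssim\epsilon$ is not what H\"older plus $H^{1}\hookrightarrow L^{6}$ actually yields: with $|\nabla\chi_{\epsilon}|\lesssim\epsilon^{-1}$ on an annulus of volume $\sim\epsilon^{3}$ one only gets a bound $C\|u\|_{L^{6}(\{\epsilon\le|x|\le2\epsilon\})}^{2}$, which tends to $0$ as $\epsilon\to0$ but with no rate in general; fortunately convergence to zero is all the density argument needs, so the proof survives. Second, the closing heuristic about low dimensions is off: in $\mathbb{R}^{2}$ a point has zero $H^{1}$-capacity (positive capacity occurs only in dimension one), and the failure of this Hardy inequality in two dimensions is due to the degeneration of the constant (there $\nabla\cdot(x/|x|^{2})=0$, and more precisely the sharp factor $(n-2)/2$ vanishes), not to an obstruction in the cutoff/density step. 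Since this remark is tangential to the three-dimensional statement, it does not affect the validity of your proof, but as written it is misleading and should either be corrected or simply replaced by the citation the paper itself uses.
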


\section{Estimates of the low frequency part}
This section studies the following  low frequency  equation
\begin{eqnarray}\label{gllow}
\begin{cases}
\partial_{t}u_{1}+A u_{1}=F_{1},\\
u_{1}|_{t=0}=u_{10},\\
u_{1}(0)=u_{1}(T),
\end{cases}
\end{eqnarray}
where $F_{1}$ is assumed to satisfy $\operatorname{supp}\hat{F}_{1}\subset\{ |\xi|\leq r_{\infty}\}$.

Formally, the solution of equation (\ref{gllow}) can be written  as
\begin{eqnarray}\label{formulalow}
u_{1}(t)=\mbox{e}^{-tA}u_{10}+\int_{0}^{t}\mbox{e}^{-(t-s)A}F_{1}(s)\mbox{d}s.
\end{eqnarray}

Let $u_{1}(0)=u_{1}(T)$ in (\ref{formulalow}), we have
\begin{eqnarray}\label{low1}
u_{10}=\mbox{e}^{-TA}u_{10}+\int_{0}^{T}\mbox{e}^{-(T-s)A}F_{1}(s)\mbox{d}s,
\end{eqnarray}
which yields
\begin{eqnarray}\label{low2}
u_{10}=(1-\mbox{e}^{-TA})^{-1}\int_{0}^{T}\mbox{e}^{-(T-s)A}F_{1}(s)\mbox{d}s
\end{eqnarray}
provided that $(1-\mbox{e}^{-TA})^{-1}$ exists in some sense.

Substituting (\ref{low2}) into (\ref{formulalow}) leads to
\begin{eqnarray}\label{formulalow1}
u_{1}(t)=\mbox{e}^{-tA}(1-\mbox{e}^{-TA})^{-1}\int_{0}^{T}\mbox{e}^{-(T-s)A}F_{1}(s)\mbox{d}s
+\int_{0}^{t}\mbox{e}^{-(t-s)A}F_{1}(s)\mbox{d}s.
\end{eqnarray}

This leads to  the following Proposition
concerning the solvability of (\ref{gllow}).
\begin{prop}\label{myprop1}
If $F_{1}$ satisfies the following three conditions
\begin{enumerate}
  \item  $F_{1}(-x)=-F_{1}(x);$
  \item  $\operatorname{supp}\hat{F}_{1}\subset\{ |\xi|\leq r_{\infty}\};$
  \item  $F_{1}\in L^{2}(0, T; L^{2}\cap L^{1}_{1}),$
\end{enumerate}
then  $u_{1}(t)$ defined by (\ref{formulalow1}) is  a solution of (\ref{gllow})
in $X(0, T)$
and there holds
\begin{equation}\label{lowin1}
  \|u_{1}\|_{X(0, T)}\leq C\|F_{1}\|_{L^{2}(0, T;L_{1}^{1})}.
\end{equation}
\end{prop}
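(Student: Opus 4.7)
The plan is to Fourier-transform everything and work with pointwise bounds in $\xi$-space, exploiting both the low-frequency support and the oddness of $F_1$. Writing $\hat A(\xi)=(1+i)|\xi|^2$, the formula $(\ref{formulalow1})$ becomes
\begin{equation*}
\hat u_1(t,\xi) = e^{-t\hat A(\xi)}m(\xi)\int_0^T e^{-(T-s)\hat A(\xi)}\hat F_1(s,\xi)\,ds + \int_0^t e^{-(t-s)\hat A(\xi)}\hat F_1(s,\xi)\,ds,
\end{equation*}
where $m(\xi)=(1-e^{-T\hat A(\xi)})^{-1}$, supported in $\{|\xi|\le r_\infty\}$. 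The two key pointwise facts I would first establish are: (a) the multiplier bounds $|m(\xi)|\le C|\xi|^{-2}$ and $|\nabla_\xi m(\xi)|\le C|\xi|^{-3}$ for $|\xi|\le r_\infty$, obtained directly from the quoted estimate $|1-e^{(1+i)T\theta}|\ge c\theta$ for small $\theta>0$; and (b) from oddness of $F_1$ together with $F_1\in L^1_1$, that $\hat F_1(s,0)=0$ while $\nabla_\xi\hat F_1$ remains bounded, yielding $|\hat F_1(s,\xi)|\le C|\xi|\|F_1(s)\|_{L^1_1}$ and $|\nabla_\xi\hat F_1(s,\xi)|\le C\|F_1(s)\|_{L^1_1}$.

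Combining (a) and (b) gives the uniform bounds $|\hat u_1(t,\xi)|\le C|\xi|^{-1}\|F_1\|_{L^2(0,T;L^1_1)}$ and $|\nabla_\xi\hat u_1(t,\xi)|\le C|\xi|^{-2}\|F_1\|_{L^2(0,T;L^1_1)}$ on $\{|\xi|\le r_\infty\}$. The first is square-integrable in three dimensions on the ball of radius $r_\infty$, yielding $\|u_1(t)\|_{L^2}\le C\|F_1\|_{L^2(0,T;L^1_1)}$. The second bound alone is \emph{not} square-integrable in 3D; this is precisely why the norm in $X(a,b)$ uses $x\nabla u_1$ rather than $xu_1$, since the Fourier transform of $x_k\partial_j u_1$ equals (up to sign) $\delta_{jk}\hat u_1+\xi_j\partial_{\xi_k}\hat u_1$, and the extra factor of $\xi$ turns $|\xi|^{-2}$ into $|\xi|^{-1}$, which is $L^2$ on a ball in 3D.

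With these Fourier estimates in hand I would bound the three pieces of $\|u_1\|_{X(0,T)}$ in turn. For $\|u_1\|_{H^1(0,T;L^2)}$, the $L^2$ bound is immediate from Plancherel, and the time derivative is controlled via the equation $\partial_t u_1=(1+i)\Delta u_1+F_1$ together with the pointwise bound on $|\xi|^2\hat u_1$ and the Bernstein-type estimate $\|F_1\|_{L^2}\le C\|F_1\|_{L^1}$ available on low frequencies. For $\|x\nabla u_1\|_{H^1(0,T;L^2)}$, Plancherel reduces the task to $L^2$ bounds on $\hat u_1$, $\xi_j\partial_{\xi_k}\hat u_1$, and their $t$-derivatives; each such bound is a direct computation from the formula using (a) and (b). For $\|\partial_t u_1\|_{L^2(0,T;L^2_1)}$, Plancherel reduces to controlling $\widehat{\partial_t u_1}$ and $\nabla_\xi\widehat{\partial_t u_1}$ in $L^2$, and a pointwise computation using $\widehat{\partial_t u_1}=-\hat A\hat u_1+\hat F_1$ shows both are controlled by $\|F_1\|_{L^2(0,T;L^1_1)}$.

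The main obstacle is the $|\xi|^{-2}$ singularity of $m(\xi)$ at the origin, which would prevent membership in standard spaces. It is defused by a combination of three ingredients: the oddness condition $F_1(-x)=-F_1(x)$ forces $\hat F_1(0)=0$ and provides one factor of $|\xi|$; the norm of $X(a,b)$ uses $x\nabla u_1$ rather than $xu_1$, providing a second factor of $\xi$ in Fourier; and the three-dimensional setting makes $|\xi|^{-2}$ locally integrable (which would fail in dimension $\le 2$). Finally, verifying that $u_1$ actually solves $(\ref{gllow})$ is a routine differentiation under the integral of the closed form $(\ref{formulalow1})$, together with evaluation at $t=0$ and $t=T$ using $(1-e^{-TA})\circ(1-e^{-TA})^{-1}=I$ on the subspace $\{f:\operatorname{supp}\hat f\subset\{|\xi|\le r_\infty\},\,\hat f(0)=0\}$.
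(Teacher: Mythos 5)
Your proposal is correct and follows essentially the same route as the paper: the key ingredients — the multiplier bounds $|(1-e^{-T\hat A})^{-1}|\le C|\xi|^{-2}$ and $|\partial_\xi(1-e^{-T\hat A})^{-1}|\le C|\xi|^{-3}$ on the low-frequency support, the oddness of $F_1$ forcing $\hat F_1(0)=0$ so that $|\hat F_1(\xi)|\le |\xi|\,\|F_1\|_{L^1_1}$, the identification of $x\nabla u_1$ with $\partial_\xi(\xi\hat u_1)$, and the $3$D integrability of $|\xi|^{-1}$ on a ball — are exactly those of Propositions \ref{myprop2}--\ref{myprop3} and their combination in the paper's proof of Proposition \ref{myprop1}. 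The only difference is organizational: you run the argument as a single pointwise symbol computation followed by Plancherel, whereas the paper modularizes it into semigroup estimates and a bounded-inverse estimate before composing them via the Duhamel formula (\ref{formulalow1}).
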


To prove Proposition \ref{myprop1}, we first investigate the properties of $e^{-tA}.$
We have the following Proposition describing the properties of $e^{-tA}.$

\begin{prop}\label{myprop2}
We have the following assertions:

\begin{enumerate}
  \item
   If $u_{1}$ satisfies $\operatorname{supp}\hat{u}_{1}\subset \{|\xi|\leq r_{\infty}\}$
  and $\|u_{1}\|_{L^{2}}+\|x \nabla u_{1}\|_{L^{2}}<\infty$,
  then $\operatorname{supp}\widehat{e^{-tA}u_{1}}\subset \{|\xi|\leq r_{\infty}\}$ and
  one finds
  \begin{eqnarray}
   \|e^{-tA}u_{1}\|_{L^{2}}+
   \|\partial_{t}e^{-tA}u_{1}\|_{L^{2}}\leq C\|u_{1}\|_{L^{2}},\label{in1}\\
   \|x \nabla e^{-tA}u_{1}\|_{L^{2}}
    +\|x \partial_{t}\nabla e^{-tA}u_{1}\|_{L^{2}}\leq C(\|u_{1}\|_{L^{2}}+\|x \nabla u_{1}\|_{L^{2}}), \label{in2}\\
    \|\partial_{t}e^{-tA}u_{1}\|_{L_{1}^{2}}\leq C(\|u_{1}\|_{L^{2}}+\|x \nabla u_{1}\|_{L^{2}}),\label{in3}
  \end{eqnarray}
  where in the above three inequalities, $t\in[0, T^{\prime}]$
  with $T^{\prime}>0$ any given number and $C$ depends on $T^{\prime}.$
  \item If $\operatorname{supp}\hat{F}_{1}\subset \{|\xi|\leq r_{\infty}\}$
  and $\|F_{1}\|_{L^{2}(0, T; L^{2})}+\|x \nabla F_{1}\|_{L^{2}(0, T; L^{2})}<\infty$,
  then $\operatorname{supp}\mathcal{F}[\int_{0}^{t}\mbox{e}^{-(t-s)A}F_{1}(s)\mbox{d}s]\subset \{|\xi|\leq r_{\infty}\}$  and
  there holds
  \begin{eqnarray}
    &&\left\|\int_{0}^{t}\mbox{e}^{-(t-s)A}F_{1}(s)\mbox{d}s\right\|_{H^{1}(0, T; L^{2})}
  +\left\|x\nabla\int_{0}^{t}\mbox{e}^{-(t-s)A}F_{1}(s)\mbox{d}s\right\|_{H^{1}(0, T; L^{2})}\nonumber\\
  &&\leq C\|F_{1}\|_{L^{2}(0, T; L^{2})}+\|x \nabla F_{1}\|_{L^{2}(0, T; L^{2})}\label{in4}
  \end{eqnarray}
  and  if in addition $F_{1}\in L^{2}(0, T; L_{1}^{2}),$  then
  \begin{eqnarray}
    \left\|\partial_{t}\int_{0}^{t}\mbox{e}^{-(t-s)A}F_{1}(s)\mbox{d}s\right\|_{L^{2}(0, T; L_{1}^{2})}
  \leq C\|F_{1}\|_{L^{2}(0, T; L_{1}^{2})}, \label{in5}
  \end{eqnarray}
  where in the above two inequalities, $C$  depends on $T.$
\end{enumerate}
\end{prop}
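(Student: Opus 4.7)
The strategy is Fourier analytic. Since $A=-(1+i)\Delta$, the operator $e^{-tA}$ is multiplication by $e^{-(1+i)t|\xi|^2}$ on the Fourier side, with modulus $e^{-t|\xi|^2}\le 1$. The two support assertions of the proposition are therefore immediate from the hypotheses on $\hat{u}_1$ and $\hat{F}_1$. Lemma \ref{lem403} applied with $f_1=e^{-tA}u_1$ (which is frequency-supported in $\{|\xi|\le r_\infty\}$) gives Bernstein-type bounds, and together with Plancherel they yield the unweighted inequality in part (1): the $L^2$ estimate is contractivity of $e^{-tA}$, while the $\partial_t$ estimate follows from $\partial_t e^{-tA}u_1=(1+i)\Delta e^{-tA}u_1$ after the Laplacian is absorbed into a factor $r_\infty^2$.

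The weighted estimates in part (1) all reduce to these unweighted ones through a single commutator identity. Applying $i\partial_{\xi_k}$ to the product $e^{-(1+i)t|\xi|^2}\hat{u}_1$ and Fourier inverting yields
\begin{equation*}
x_k\, e^{-tA}u_1 \;=\; e^{-tA}(x_k u_1)\;-\;2(1+i)t\,e^{-tA}\partial_k u_1.
\end{equation*}
Since $e^{-tA}$ commutes with $\nabla$ and $\partial_t e^{-tA}=(1+i)\Delta e^{-tA}$, every quantity $x\nabla^m\partial_t^n e^{-tA}u_1$ breaks into a finite sum of terms of the forms $e^{-tA}(x\nabla^{m'}u_1)$ and $t^{\ell}e^{-tA}\nabla^{m''}u_1$, with $m',m''$ at most a small integer. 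Lemma \ref{lem403} controls the first type by $\|u_1\|_{L^2}+\|x\nabla u_1\|_{L^2}$, and the second by $\|u_1\|_{L^2}$ via the frequency cutoff; tracking the polynomial powers of $t$ produces a constant depending on $T'$. This handles every weighted estimate in part (1), including the $L^2_1$-bound on $\partial_t e^{-tA}u_1$.

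Part (2) follows from part (1) via Duhamel's formula. The support assertion is immediate from preservation of frequency support under multiplication. Each spatial norm on the left-hand sides, applied pointwise in $t$ to $v(t):=\int_0^t e^{-(t-s)A}F_1(s)\,\mbox{d}s$, reduces to the corresponding part-(1) bound by Minkowski's integral inequality with $F_1(s)$ in place of $u_1$, followed by integration in $t$. For the time-derivative piece one uses
\begin{equation*}
\partial_t v(t)\;=\;F_1(t)\;+\;(1+i)\Delta\, v(t),
\end{equation*}
so that the $\partial_t$ norm is controlled by $\|F_1\|_{L^2(0,T;L^2)}$ plus the Laplacian of $v$; the Laplacian is again absorbed by the low-frequency cutoff $|\xi|\le r_\infty$. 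The weighted $\partial_t$ bound is obtained by applying the commutator identity once more to move $x$ inside the integrand.

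The main obstacle is the bookkeeping in the weighted $\partial_t$ estimates, where each $\partial_t$ contributes a Laplacian and each $x$ contributes either an extra $\nabla$ (with a polynomial factor in $t$) or an honest weight. No new idea is needed beyond the commutator identity and Lemma \ref{lem403}, but careful tracking is required to ensure that every resulting term lands in the stated right-hand sides; this is the only non-mechanical step.
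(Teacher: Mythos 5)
Your proposal is correct and follows essentially the route the paper indicates: the paper's own proof simply asserts that the support properties and the inequalities follow from the Plancherel theorem and the frequency support of $u_{1}$ (omitting details), and your Fourier-multiplier argument with the commutator identity $x_{k}e^{-tA}u_{1}=e^{-tA}(x_{k}u_{1})-2(1+i)t\,e^{-tA}\partial_{k}u_{1}$ plus the low-frequency cutoff is exactly the standard way to carry out those omitted details, including the Duhamel/Minkowski reduction for part (2). No gaps beyond routine bookkeeping, which you correctly flag.
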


\begin{proof}
\begin{enumerate}
  \item
  The support properties can be easily verified
  and the three inequalities can be justified with the help of
  Plancherel theorem and the support property of $u_{1}.$
  We omit the details.
  \item
   Assertions in 1 and the support property of $F_{1}$
   leads to assertions in 2.
\end{enumerate}
We thus complete the proof of Proposition \ref{myprop2}.
\end{proof}

Next, we establish sufficient conditions that claim
the existence of $(1-\mbox{e}^{-TA})^{-1}.$
\begin{prop}\label{myprop3}
Let the conditions in Proposition \ref{myprop1} be satisfied.
Then   the following equation
\begin{equation}\label{inverse}
  (1-\mbox{e}^{-TA})u_{1}=F_{1}
\end{equation}
admits  a unique solution $u_{1}$ that satisfies $\operatorname{supp}\hat{u}_{1}\subset \{|\xi|\leq r_{\infty}\}$ and
\begin{equation*}
  \|u_{1}\|_{L^{2}}+\|x \nabla u_{1}\|_{L^{2}}\leq C\|F_{1}\|_{L_{1}^{1}}.
\end{equation*}
Consequently, $1-\mbox{e}^{-TA}$ has a bounded inverse
 $(1-\mbox{e}^{-TA})^{-1}: X_{1}\rightarrow X_{2}$,
 where $X_{1}=\{F_{1}: \operatorname{supp}\hat{F}_{1}\subset \{|\xi|\leq r_{\infty}\},
 F_{1}\in L^{2}\cap L_{1}^{1}
 \}$ and
 $X_{2}=\{u_{1}: \operatorname{supp}\hat{u}_{1}\subset \{|\xi|\leq r_{\infty}\},
 u_{1}\in L^{2}, x\nabla u_{1}\in L^{2}
 \}$ and there holds
  \begin{equation*}
  \|(1-\mbox{e}^{-tA})^{-1}F_{1}\|_{L^{2}}+\|x \nabla (1-\mbox{e}^{-tA})^{-1}F_{1}\|_{L^{2}}\leq C\|F_{1}\|_{L_{1}^{1}}.
\end{equation*}
\end{prop}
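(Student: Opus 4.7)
The plan is to pass to the Fourier side, where the problem becomes a multiplier equation, and to control the only obstruction --- the $|\xi|^{-2}$ singularity at the origin --- by extracting a compensating power of $|\xi|$ from $\hat{F}_1$. Since $A=-(1+i)\Delta$ acts on the Fourier side as multiplication by $(1+i)|\xi|^2$, the equation $(1-e^{-TA})u_1=F_1$ becomes $(1-e^{-(1+i)T|\xi|^2})\hat{u}_1(\xi)=\hat{F}_1(\xi)$. On the support $|\xi|\leq r_\infty$ of $\hat{F}_1$, the inequality $(\ref{r})$ gives $|1-e^{-(1+i)T|\xi|^2}|\geq c|\xi|^2$, so away from $\xi=0$ the formula
$$\hat{u}_1(\xi)=\frac{\hat{F}_1(\xi)}{1-e^{-(1+i)T|\xi|^2}}$$
defines a unique candidate with the required support (extended by zero for $|\xi|>r_\infty$).

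The first estimate exploits the oddness of $F_1$. From $F_1(-x)=-F_1(x)$ one has $\hat{F}_1(-\xi)=-\hat{F}_1(\xi)$, so $\hat{F}_1(0)=0$. Because $F_1\in L^1_1$, both $F_1$ and $xF_1$ lie in $L^1$, so $\hat{F}_1\in C^1$ with $\|\nabla_\xi\hat{F}_1\|_{L^\infty}\leq C\|F_1\|_{L^1_1}$; the mean value theorem then yields $|\hat{F}_1(\xi)|\leq C|\xi|\|F_1\|_{L^1_1}$. Writing $m(\xi)=1/(1-e^{-(1+i)T|\xi|^2})$ on the support, we have $|m(\xi)|\leq C/|\xi|^2$ and hence $|\hat{u}_1(\xi)|\leq C\|F_1\|_{L^1_1}/|\xi|$, which is in $L^2(\mathbb{R}^3)$ because $|\xi|^{-2}$ is locally integrable in three dimensions; Plancherel then gives $\|u_1\|_{L^2}\leq C\|F_1\|_{L^1_1}$. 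For the weighted bound I would use the Fourier identity $\widehat{x_k\partial_j u_1}(\xi)=-\delta_{jk}\hat{u}_1(\xi)-\xi_j\partial_{\xi_k}\hat{u}_1(\xi)$, so it suffices to bound $\xi\otimes\nabla_\xi\hat{u}_1$ in $L^2$. Differentiating $m$ gives $|\nabla_\xi m(\xi)|\leq C/|\xi|^3$ on the support, so the product rule yields
$$|\xi\,\nabla_\xi\hat{u}_1(\xi)|\leq |\xi||m||\nabla_\xi\hat{F}_1|+|\xi||\hat{F}_1||\nabla_\xi m|\leq \frac{C\|F_1\|_{L^1_1}}{|\xi|},$$
which again lies in $L^2(\mathbb{R}^3)$. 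Together these estimates give the asserted inequality.

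Uniqueness is immediate: any $u_1\in X_2$ with $(1-e^{-TA})u_1=0$ satisfies $(1-e^{-(1+i)T|\xi|^2})\hat{u}_1(\xi)=0$ almost everywhere, and since this factor vanishes only at the single point $\xi=0$, one has $\hat{u}_1=0$ a.e.\ and hence $u_1=0$. The bounded-inverse statement from $X_1$ to $X_2$ is then a direct repackaging of the estimates above. The main obstacle is that $m$ has a $|\xi|^{-2}$ singularity at the origin while $\nabla_\xi m$ blows up like $|\xi|^{-3}$; both dangers must be absorbed by the single vanishing of $\hat{F}_1$ at $\xi=0$ together with the local integrability of $|\xi|^{-2}$ in $\mathbb{R}^3$. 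This is precisely where the oddness of $F_1$ and the dimension $n=3$ enter the argument in an essential way.
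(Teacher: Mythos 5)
Your proposal is correct and follows essentially the same route as the paper: Fourier multiplier analysis with the lower bound $|1-e^{-(1+i)T|\xi|^2}|\geq c|\xi|^2$ near the origin, the oddness of $F_1$ giving $\hat F_1(0)=0$ so that $|\hat F_1(\xi)|\leq C|\xi|\,\|F_1\|_{L^1_1}$, local square-integrability of $|\xi|^{-1}$ in dimension three, and the derivative bound $|\nabla_\xi m|\leq C|\xi|^{-3}$ for the weighted term. Your use of the vanishing of $\hat F_1$ at the origin to control the term $|\hat F_1||\nabla_\xi m|$ is in fact slightly more careful than the paper's corresponding step, but the argument is the same in substance.
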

\begin{proof}
First, direct computation yields
\begin{equation*}
  |1-\mbox{e}^{-(1+i)\theta}|\geq  \sin\theta \geq c\theta
\end{equation*}
provided $\theta>0$ is sufficiently small.

This fact allows us to conclude that there exists a constant
$r_{\infty}>0$ such that
 \begin{equation}\label{r}
  |1-\mbox{e}^{-T\hat{A}}|=|1-\mbox{e}^{-T(1+i)|\xi|^{2}}|\leq   C \frac{1}{|\xi|^{2}}
\end{equation}
when $|\xi|\leq r_{\infty}.$

Applying Fourier transform to (\ref{inverse}), one obtains
\begin{equation}\label{inverse1}
  \hat{u}_{1}=(1-\mbox{e}^{-T\hat{A}})^{-1}\hat{F}_{1}.
\end{equation}
Thus, the support property of $u_{1}$ can be easily verified.

Plancherel theorem and (\ref{r}) yield
\begin{eqnarray*}
&&\|u_{1}\|_{L^{2}}
=\|\hat{u}_{1}\|_{L^{2}}
=\|(1-\mbox{e}^{-T\hat{A}})^{-1}\hat{F}_{1}\|_{L^{2}}
\leq C\left\|\frac{1}{|\xi|^{2}}\hat{F}_{1}\right\|_{L^{2}}
\leq C\left\|\frac{1}{|\xi|^{2}}(\hat{F}_{1}(\xi)-\hat{F}_{1}(0))\right\|_{L^{2}}\\
&&\leq C\left\|\frac{1}{|\xi|}\hat{F}_{1}\partial_{\xi}\hat{F}_{1}\right\|_{L^{2}}
\leq C\left\|\frac{1}{|\xi|}\right\|_{L^{2}(|\xi|\leq r_{\infty})}
\|F_{1}\|_{L^{1}}
\leq C\|F_{1}\|_{L^{1}},
\end{eqnarray*}
where $\hat{F}_{1}(0)=0$ since $F_{1}(-x)=-F_{1}(x).$

Note that $|\partial_{\xi} (1-\mbox{e}^{-T\hat{A}})^{-1}|\leq C\frac{1}{|\xi|^{3}}$.
This fact leads to
\begin{eqnarray*}
  \|x\nabla u_{1}\|_{L^{2}}
  =\|\partial_{\xi}(i\xi \hat{u}_{1})\|_{L^{2}}
  &\leq& C\| \hat{u}_{1}\|_{L^{2}}
  +C\| i\xi \partial_{\xi}\hat{u}_{1}\|_{L^{2}}\\
  &\leq& C\| \hat{u}_{1}\|_{L^{2}}
  +C\| i\xi \partial_{\xi} [(1-\mbox{e}^{-T\hat{A}})^{-1}\hat{F}_{1}]\|_{L^{2}}\\
  &\leq& C\| \hat{u}_{1}\|_{L^{2}}
  + C\| i\xi [\partial_{\xi} (1-\mbox{e}^{-T\hat{A}})^{-1}]\hat{F}_{1}\|_{L^{2}}\\
  &&+C\| i\xi  (1-\mbox{e}^{-T\hat{A}})^{-1}\partial_{\xi}\hat{F}_{1}\|_{L^{2}}\\
  &\leq& C\| \hat{u}_{1}\|_{L^{2}}
  +C\left\| \frac{1}{|\xi|^{2}}\hat{F}_{1}\right\|_{L^{2}}
  +C\left\| \frac{1}{|\xi|}\partial_{\xi}\hat{F}_{1}\right\|_{L^{2}}\\
  &\leq& C\| u_{1}\|_{L^{2}}
  +C\| F_{1}\|_{L^{1}}
  +C\|\partial_{\xi}\hat{F}_{1} \|_{L^{\infty}}\\
  &\leq& C\| u_{1}\|_{L^{2}}
  +C\| F_{1}\|_{L^{1}}
  +C\|x F_{1} \|_{L^{1}}.
\end{eqnarray*}
We thus complete the proof of Proposition \ref{myprop3}.
\end{proof}
We are now in a position to prove Proposition \ref{myprop1}.

\textbf{Proof of Proposition \ref{myprop1}:}
We only need to prove (\ref{lowin1}).

Using (\ref{in1}), (\ref{in2}), (\ref{in4}) and Proposition \ref{myprop3}, one finds
\begin{eqnarray*}
&&\left\|\mbox{e}^{-tA}(1-\mbox{e}^{-TA})^{-1}\int_{0}^{T}\mbox{e}^{-(T-s)A}F_{1}(s)\mbox{d}s\right\|_{H^{1}(0,T; L^{2})}\\
&&\quad+\left\|x \nabla\mbox{e}^{-tA}(1-\mbox{e}^{-TA})^{-1}\int_{0}^{T}\mbox{e}^{-(T-s)A}F_{1}(s)\mbox{d}s\right\|_{H^{1}(0,T; L^{2})}\\
&&\leq
C\left\|\int_{0}^{T}\mbox{e}^{-(T-s)A}(1-\mbox{e}^{-TA})^{-1}F_{1}(s)\mbox{d}s\right\|_{H^{1}(0,T; L^{2})}\\
&&\quad+C\left\|x \nabla\int_{0}^{T}\mbox{e}^{-(T-s)A}(1-\mbox{e}^{-TA})^{-1}F_{1}(s)\mbox{d}s\right\|_{H^{1}(0,T; L^{2})}\\
&&\leq C\|(1-\mbox{e}^{-TA})^{-1}F_{1}\|_{L^{2}(0,T; L^{2})}
+C\|x\nabla(1-\mbox{e}^{-TA})^{-1}F_{1}\|_{L^{2}(0,T; L^{2})}\\
&&\leq C\|F_{1}\|_{L^{2}(0,T; L_{1}^{1})}.
\end{eqnarray*}
Employing (\ref{in3}) and (\ref{in4}) yields
\begin{eqnarray*}
&&\left\|\partial_{t}\mbox{e}^{-tA}(1-\mbox{e}^{-TA})^{-1}\int_{0}^{T}\mbox{e}^{-(T-s)A}F_{1}(s)\mbox{d}s\right\|_{L^{2}(0,T; L_{1}^{2})}\\
&&\leq
C\left\|\int_{0}^{T}\mbox{e}^{-(T-s)A}(1-\mbox{e}^{-TA})^{-1}F_{1}(s)\mbox{d}s\right\|_{L^{2}(0,T; L^{2})}\\
&&\quad+C\left\|x \nabla\int_{0}^{T}\mbox{e}^{-(T-s)A}(1-\mbox{e}^{-TA})^{-1}F_{1}(s)\mbox{d}s\right\|_{L^{2}(0,T; L^{2})}\\
&&\leq C\|(1-\mbox{e}^{-TA})^{-1}F_{1}\|_{L^{2}(0,T; L^{2})}
+C\|x\nabla(1-\mbox{e}^{-TA})^{-1}F_{1}\|_{L^{2}(0,T; L^{2})}\\
&&\leq C\|F_{1}\|_{L^{2}(0,T; L_{1}^{1})}.
\end{eqnarray*}

Invoking (\ref{in4}), (\ref{in5})  and Lemma \ref{lem403}(ii), we obtain
\begin{eqnarray*}
    &&\left\|\int_{0}^{t}\mbox{e}^{-(t-s)A}F_{1}(s)\mbox{d}s\right\|_{H^{1}(0, T; L^{2})}
  +\left\|x\nabla\int_{0}^{t}\mbox{e}^{-(t-s)A}F_{1}(s)\mbox{d}s\right\|_{H^{1}(0, T; L^{2})}\\
  &&\quad+\left\|\partial_{t}\int_{0}^{t}\mbox{e}^{-(t-s)A}F_{1}(s)\mbox{d}s\right\|_{L^{2}(0, T; L_{1}^{2})}
  \\
  &&\leq C\|F_{1}\|_{L^{2}(0, T; L^{2})}+\|x \nabla F_{1}\|_{L^{2}(0, T; L^{2})}
  + C\|F_{1}\|_{L^{2}(0, T; L_{1}^{2})}\\
  &&\leq C\|F_{1}\|_{L^{2}(0,T; L_{1}^{1})}.
  \end{eqnarray*}
We thus complete the proof of Proposition \ref{myprop1}.

\section{Estimates of the high frequency part }

The following high frequency  equation are investigated in this section
\begin{eqnarray}\label{glhigh}
\begin{cases}
\partial_{t}u_{\infty}+A u_{\infty}=F_{\infty},\\
u_{\infty}|_{t=0}=u_{\infty0},\\
u_{\infty}(0)=u_{\infty}(T),
\end{cases}
\end{eqnarray}
where $F_{\infty}$ is assumed to satisfy $\operatorname{supp}\hat{F}_{\infty}\subset\{ |\xi|\geq r_{1}\}$.

Similar to the derivation of (\ref{formulalow1}),
 we formally have
\begin{eqnarray}\label{formulahigh1}
u_{\infty}(t)=\mbox{e}^{-tA}(1-\mbox{e}^{-TA})^{-1}\int_{0}^{T}\mbox{e}^{-(T-s)A}F_{\infty}(s)\mbox{d}s
+\int_{0}^{t}\mbox{e}^{-(t-s)A}F_{\infty}(s)\mbox{d}s.
\end{eqnarray}

The following Proposition is concerned with the solvability of (\ref{glhigh}).

\begin{prop}\label{prop6050}
Assume that $F_{\infty}\in L^{2}(0, T; X_{\infty}^{1})$
and $F_{\infty}(-x)=-F_{\infty}(x)$,
where
  $X_{\infty}^{k}=\{f: \operatorname{supp}\hat{f}\subset\{|\xi|\geq r_{1}\}, f\in H_{1}^{k}\}$.
Then $u_{\infty}(t)$ defined by (\ref{formulahigh1}) is a solution of (\ref{glhigh})
in $Y(0, T)$
and there holds
\begin{equation*}
  \|u_{\infty}\|_{Y(0, T)}\leq C\|F_{\infty}\|_{L^{2}(0, T; H_{1}^{1})}.
\end{equation*}
\end{prop}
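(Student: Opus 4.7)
The plan is to establish the required bounds in three stages: first prove invertibility of $1-e^{-TA}$ on the high-frequency subspace, then carry out a weighted energy estimate for the inhomogeneous Cauchy problem, and finally combine the two.

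First I would handle $(1-e^{-TA})^{-1}$. On $\{|\xi|\geq r_{1}\}$ the multiplier satisfies
\begin{equation*}
\bigl|1-e^{-T(1+i)|\xi|^{2}}\bigr|\geq 1-e^{-T|\xi|^{2}}\geq 1-e^{-Tr_{1}^{2}}>0,
\end{equation*}
so $1-e^{-TA}$ is boundedly invertible on any $L^{2}$-based space of functions spectrally supported in $\{|\xi|\geq r_{1}\}$. Because the symbol $(1-e^{-T(1+i)|\xi|^{2}})^{-1}$ together with its $\xi$-derivatives is uniformly bounded on that region, the inverse maps $X_{\infty}^{k}$ into itself with norm depending only on $T$ and $r_{1}$; in particular it commutes with $P_{\infty}$ and preserves the oddness $u(-x)=-u(x)$. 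Combined with the trivial bound $\|e^{-tA}f\|_{H_{1}^{k}}\leq C\|f\|_{H_{1}^{k}}$ for $t\in[0,T]$ and Lemma \ref{lem405} to handle the weight $|x|$ under convolution, this gives $u_{\infty 0}:=u_{\infty}(0)\in H_{1}^{2}$ with $\|u_{\infty 0}\|_{H_{1}^{2}}\leq C\|F_{\infty}\|_{L^{2}(0,T;H_{1}^{1})}$.

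Second, I would apply the weighted energy method to $\partial_{t}u_{\infty}+Au_{\infty}=F_{\infty}$. For $k=0,1,2$ differentiate the equation $k$ times, pair with $\nabla^{k}\bar u_{\infty}$ and with $(1+|x|)^{2}\nabla^{k}\bar u_{\infty}$, integrate, and take the real part. The unweighted identity produces
\begin{equation*}
\tfrac{1}{2}\tfrac{d}{dt}\|\nabla^{k}u_{\infty}\|_{L^{2}}^{2}+\|\nabla^{k+1}u_{\infty}\|_{L^{2}}^{2}=\mathrm{Re}\langle\nabla^{k}F_{\infty},\nabla^{k}u_{\infty}\rangle.
\end{equation*}
The weighted identity produces the same principal terms plus commutators arising from $[\Delta,(1+|x|)^{2}]$, which generate $x\cdot\nabla u_{\infty}$-type contributions. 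I would absorb them using Lemma \ref{lem406} to control $\||x|\nabla^{k}u_{\infty}\|_{L^{2}}^{2}$ by $\||x|\nabla^{k+1}u_{\infty}\|_{L^{2}}^{2}+\|\nabla^{k}u_{\infty}\|_{L^{2}}^{2}$ and the Hardy inequality (Lemma \ref{lemhardy}) for terms involving $|x|^{-1}$. A Cauchy--Schwarz on the right-hand side and summing over $k=0,1,2$ would then give
\begin{equation*}
\|u_{\infty}\|_{C([0,T];H_{1}^{2})}^{2}+\|u_{\infty}\|_{L^{2}(0,T;H_{1}^{3})}^{2}\leq C\|u_{\infty 0}\|_{H_{1}^{2}}^{2}+C\|F_{\infty}\|_{L^{2}(0,T;H_{1}^{1})}^{2}.
\end{equation*}
The $H^{1}(0,T;H_{1}^{1})$ piece of $\|u_{\infty}\|_{Y(0,T)}$ follows by reading $\partial_{t}u_{\infty}=(1+i)\Delta u_{\infty}+F_{\infty}$ and applying the $L^{2}(0,T;H_{1}^{3})$ bound just obtained. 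Combining with the first step yields the desired estimate.

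The main obstacle is the weighted energy step: the dissipation $-(1+i)\Delta$ is not formally self-adjoint so the real part only captures half of its coercivity, and the commutators with $(1+|x|)^{2}$ are of the same order as the dissipation in its $x$-derivative component. Making these quantities fit on the left-hand side requires the high-frequency Poincar\'e-type estimate in Lemma \ref{lem406}, which turns $\||x|u_{\infty}\|_{L^{2}}^{2}$ into $\||x|\nabla u_{\infty}\|_{L^{2}}^{2}$ at the cost of a lower-order term, together with Hardy's inequality to deal with the factor $x/|x|^{2}$ produced by the commutators. Once this bookkeeping is in place the remaining estimates are standard Gronwall arguments, and the symmetry and spectral support of $u_{\infty}$ are automatic because every operator appearing in the formula (the semigroup, the inverse $(1-e^{-TA})^{-1}$ and $P_{\infty}$) is a radial Fourier multiplier supported in $\{|\xi|\geq r_{1}\}$.
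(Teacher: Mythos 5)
Your overall architecture matches the paper's: a weighted energy estimate giving the $C([0,T];H^{2}_{1})\cap L^{2}(0,T;H^{3}_{1})$ control, recovery of the $H^{1}(0,T;H^{1}_{1})$ piece from the equation itself, and inversion of $1-e^{-TA}$ on the high-frequency subspace to handle the periodic initial datum. Your route to the invertibility is genuinely different and more elementary: you bound the symbol directly via $|1-e^{-T(1+i)|\xi|^{2}}|\geq 1-e^{-T|\xi|^{2}}\geq 1-e^{-Tr_{1}^{2}}$ on $\{|\xi|\geq r_{1}\}$ and note that the symbol and its $\xi$-derivatives are bounded there, so the inverse acts boundedly on $H^{k}_{1}$ by Plancherel plus the product rule in $\xi$; the paper instead deduces exponential decay of $e^{-tA}$ in $H^{2}_{1}$ from the weighted energy inequality (Proposition \ref{prop606}) and concludes in Proposition \ref{prop605} that the spectral radius of $e^{-TA}$ on $X_{\infty}^{2}$ is below one. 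Your multiplier argument buys a proof of invertibility that does not depend on the energy estimate at all, which is a perfectly acceptable simplification.

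There is, however, a genuine gap in your stage one as written: you claim $\|u_{\infty 0}\|_{H^{2}_{1}}\leq C\|F_{\infty}\|_{L^{2}(0,T;H^{1}_{1})}$ from the bounded inverse, the trivial bound $\|e^{-tA}f\|_{H^{k}_{1}}\leq C\|f\|_{H^{k}_{1}}$ and Lemma \ref{lem405}. Those ingredients only control $\int_{0}^{T}e^{-(T-s)A}F_{\infty}(s)\,\mbox{d}s$ in $H^{1}_{1}$, because $F_{\infty}(s)$ has only one derivative; the extra derivative cannot come from mere boundedness of the semigroup, and the naive smoothing estimate $\|e^{-\tau A}f\|_{H^{2}_{1}}\leq C\tau^{-1/2}\|f\|_{H^{1}_{1}}$ does not close either, since $\tau^{-1/2}$ is not square integrable near $\tau=0$ and Cauchy--Schwarz against $\|F_{\infty}\|_{L^{2}(0,T;H^{1}_{1})}$ fails at this endpoint. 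Since your stage-two energy estimate is then fed $\|u_{\infty 0}\|_{H^{2}_{1}}$, the argument in your ordering is circular at exactly this point. The repair is the paper's ordering: first run the weighted energy estimate (your stage two) on the Duhamel part $v(t)=\int_{0}^{t}e^{-(t-s)A}F_{\infty}(s)\,\mbox{d}s$ with zero data, which yields $\|v(T)\|_{H^{2}_{1}}\leq C\|F_{\infty}\|_{L^{2}(0,T;H^{1}_{1})}$ because one derivative of $F_{\infty}$ is integrated by parts onto the dissipation (this is claim 2 of Proposition \ref{prop605}); then set $u_{\infty 0}=(1-e^{-TA})^{-1}v(T)$ and apply your multiplier bound for the inverse. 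A minor point in the same spirit: at the top level of your energy identity the term $\mathrm{Re}\langle\nabla^{2}F_{\infty},\nabla^{2}u_{\infty}\rangle$ (and its weighted analogue) must be integrated by parts before applying Cauchy--Schwarz, exactly as the paper does for its term $B$ when $|\alpha|\geq 1$, since $F_{\infty}$ is only in $H^{1}_{1}$.
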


To prove Proposition \ref{prop6050},
we first establish the following weighted energy estimates.

\begin{prop}\label{prop606}
For the smooth solution of equation (\ref{glhigh}),
there holds the following estimate
\begin{eqnarray}\label{inhigh}
\frac{1}{2}\frac{\mbox{d}}{\mbox{d}t}\| u_{\infty}\|_{H^{2}_{1}}^{2}
+d\| u_{\infty}\|_{H^{3}_{1}}^{2}
\leq
C
\|F_{\infty}\|_{H^{1}_{1}}^{2},
\end{eqnarray}
where $d$ is a positive constant  large enough.
\end{prop}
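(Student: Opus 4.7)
The plan is to run a weighted energy argument on the equation for $u_{\infty}$ after applying $\partial^{\alpha}$ with $|\alpha|\leq 2$ and testing against $(1+|x|)^{2}\overline{\partial^{\alpha}u_{\infty}}$. Taking the real part, the time derivative part contributes $\tfrac{1}{2}\tfrac{d}{dt}\|(1+|x|)\partial^{\alpha}u_{\infty}\|_{L^{2}}^{2}$; one integration by parts on $-(1+i)\Delta\partial^{\alpha}u_{\infty}$ yields the principal dissipation $\|(1+|x|)\nabla\partial^{\alpha}u_{\infty}\|_{L^{2}}^{2}$ together with cross terms coming from $\nabla(1+|x|)^{2}=2(1+|x|)\hat{x}$. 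Summing over $|\alpha|\leq 2$ supplies all the weighted gradient norms through third order; the missing zero-order piece $\|(1+|x|)u_{\infty}\|_{L^{2}}^{2}$ needed to complete $\|u_{\infty}\|_{H_{1}^{3}}^{2}$ is furnished by Lemma~\ref{lem406}, while any auxiliary unweighted $L^{2}$ norms are absorbed via Lemma~\ref{lem404}(ii).

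The cross terms demand some care because of the complex coefficient $(1+i)$. Writing $v=\partial^{\alpha}u_{\infty}$, one has
\begin{equation*}
2\,\mathrm{Re}\!\left[(1+i)\!\int(1+|x|)\,\bar{v}\,\hat{x}\!\cdot\!\nabla v\,dx\right]
=\int(1+|x|)\,\hat{x}\!\cdot\!\nabla(|v|^{2})\,dx-2\!\int(1+|x|)\,\mathrm{Im}(\bar{v}\,\hat{x}\!\cdot\!\nabla v)\,dx.
\end{equation*}
A second integration by parts rewrites the first integral as $-\int(3+2/|x|)|v|^{2}\,dx$; the mildly singular term $\int|v|^{2}/|x|\,dx$ is controlled by the Hardy inequality (Lemma~\ref{lemhardy}) combined with Lemma~\ref{lem404}(ii). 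The imaginary term is bounded by Young's inequality as $\epsilon\|(1+|x|)\nabla v\|_{L^{2}}^{2}+C_{\epsilon}\|v\|_{L^{2}}^{2}$ and then absorbed into the main dissipation.

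For the forcing, the delicate case is $|\alpha|=2$, because the hypothesis only provides $\|F_{\infty}\|_{H_{1}^{1}}$ on the right. I would split $\partial^{\alpha}=\partial_{i}\partial^{\alpha'}$ and integrate by parts in $\partial_{i}$, transferring one derivative from $F_{\infty}$ onto the test function $(1+|x|)^{2}\overline{\partial^{\alpha}u_{\infty}}$; the resulting bilinear form is controlled by $\epsilon\|u_{\infty}\|_{H_{1}^{3}}^{2}+C_{\epsilon}\|F_{\infty}\|_{H_{1}^{1}}^{2}$ through Cauchy--Schwarz and Young's inequality, with the $\epsilon$-term absorbed once more into the dissipation. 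The cases $|\alpha|\leq 1$ are a direct application of Cauchy--Schwarz.

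The main obstacle I anticipate is the precise bookkeeping of the cross terms generated by the weight $(1+|x|)^{2}$, especially the singular contribution $|v|^{2}/|x|$ coming from $\mathrm{div}\,\hat{x}=2/|x|$. Once this has been neutralized through Hardy's inequality and the high-frequency Poincar\'e-type estimate of Lemma~\ref{lem404}(ii), summing over $|\alpha|\leq 2$ and invoking Lemma~\ref{lem406} reconstructs the full $\|u_{\infty}\|_{H_{1}^{3}}^{2}$ on the left with a definite positive constant $d$, yielding \eqref{inhigh}.
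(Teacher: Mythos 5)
Your overall strategy---applying $\partial^{\alpha}$ for $|\alpha|\le 2$, testing against a weighted conjugate, integrating by parts once on the forcing at top order, and invoking Lemmas \ref{lem404}(ii) and \ref{lem406} to reconstruct the weighted norms---is the same in spirit as the paper's, but there is a genuine gap at your final absorption claim. All of your residual error terms of the form $C_{\epsilon}\|\partial^{\alpha}u_{\infty}\|_{L^{2}}^{2}$, $|\alpha|\le 2$ (from the cross terms, from the Hardy treatment of the $|v|^{2}/|x|$ contribution, and from the integration by parts on the forcing), carry constants that are large: $1/\epsilon$, the Hardy constant, and the Poincar\'e-type constant of Lemma \ref{lem404}(ii), which is of order $1/r_{1}$. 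Lemma \ref{lem404}(ii) converts these terms into unweighted gradient norms but does not shrink their coefficients; and since the weight $(1+|x|)^{2}$ equals $1$ near the origin, a term $K\|\nabla u_{\infty}\|_{H^{2}}^{2}$ with $K>1$ cannot be absorbed into the dissipation $\sum_{|\alpha|\le2}\|(1+|x|)\nabla\partial^{\alpha}u_{\infty}\|_{L^{2}}^{2}$, whose coefficient is at most $1$ (and strictly smaller after the $\epsilon$-absorptions). So your single-pass argument only yields an inequality with an extra $C\|u_{\infty}\|_{H^{2}}^{2}$ on the right with large $C$, not (\ref{inhigh}). The missing ingredient---and the key structural step in the paper---is to derive the \emph{unweighted} estimate $\frac{1}{2}\frac{d}{dt}\|u_{\infty}\|_{H^{2}}^{2}+\|\nabla u_{\infty}\|_{H^{2}}^{2}\le C\|F_{\infty}\|_{H^{1}}^{2}$ (the paper's (\ref{energy3})) separately, and then add $d_{1}$ times it to the weighted estimate with $d_{1}$ sufficiently large, so that $d_{1}\|\nabla u_{\infty}\|_{H^{2}}^{2}$ dominates all the large-constant unweighted errors; this is also the origin of the ``$d$ large enough'' in the statement.

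A secondary point: the paper uses the weight $|x|^{2}$ rather than $(1+|x|)^{2}$, so its commutator involves $\nabla(|x|^{2})=2x$, which is smooth, and no singular $\operatorname{div}(x/|x|)=2/|x|$ term or Hardy inequality ever appears; the unweighted part of $\|\cdot\|_{H^{k}_{1}}$ is then supplied by the separate unweighted estimate. Your handling of the singular term is repairable (note that Lemma \ref{lemhardy} controls $\int |v|^{2}/|x|^{2}$, so reaching $\int |v|^{2}/|x|$ needs an additional Cauchy--Schwarz or a splitting near the origin), but it produces precisely the kind of large-constant unweighted terms described above, so it does not remove the need for the two-tier combination with the unweighted energy inequality.
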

\begin{proof}
The standard  energy estimates yields
\begin{eqnarray}\label{energy1}
\frac{1}{2}\frac{\mbox{d}}{\mbox{d}t}\|u_{\infty}\|_{L^{2}}^{2}
+\|\nabla u_{\infty}\|_{L^{2}}^{2}
\leq \frac{1}{\epsilon}\|F_{\infty}\|_{L^{2}}^{2}+\epsilon\|u_{\infty}\|_{L^{2}}^{2}
\end{eqnarray}
and
\begin{eqnarray}\label{energy2}
\frac{1}{2}\frac{\mbox{d}}{\mbox{d}t}\|\nabla u_{\infty}\|_{H^{1}}^{2}
+\|\nabla^{2} u_{\infty}\|_{H^{1}}^{2}
\leq
C\|F_{\infty}\|_{H^{1}}^{2}.
\end{eqnarray}
where we have used integration by parts in the derivation of  (\ref{energy2}).

Combining (\ref{energy1}), (\ref{energy2}) and Lemma \ref{lem404}(ii),
one deduces after choosing $\epsilon$ small enough that
\begin{eqnarray}\label{energy3}
\frac{1}{2}\frac{\mbox{d}}{\mbox{d}t}\| u_{\infty}\|_{H^{2}}^{2}
+\|\nabla u_{\infty}\|_{H^{2}}^{2}
\leq
C\|F_{\infty}\|_{H^{1}}^{2}.
\end{eqnarray}
Applying $\partial_{x}^{\alpha}$ to $(\ref{glhigh})_{1}$ for $0\leq|\alpha|\leq 2$
and dotting the result with $\partial_{x}^{\alpha}\bar{u}_{\infty}$, one obtains
\begin{eqnarray}\label{wenergy1}
&&\frac{1}{2}\frac{\mbox{d}}{\mbox{d}t}\| |x|\partial_{x}^{\alpha}u_{\infty}\|_{L^{2}}^{2}
+\| |x|\partial_{x}^{\alpha}\nabla u_{\infty}\|_{L^{2}}^{2}\nonumber\\
&&=-Re\left\{
(1+i)\int_{\mathbb{R}^{3}}\nabla(|x|^{2})\nabla\partial_{x}^{\alpha} u_{\infty}
\partial_{x}^{\alpha} \bar{u}_{\infty}\mbox{d}x
\right\}
+Re\int_{\mathbb{R}^{3}}|x|^{2}\partial_{x}^{\alpha} F_{\infty}\partial_{x}^{\alpha} \bar{u}_{\infty}\mbox{d}x\nonumber\\
&&=A+B.
\end{eqnarray}
When $|\alpha|=0,$ we find
\begin{eqnarray*}
A\leq
\epsilon_{1} |u_{\infty}|_{L^{2}_{1}}^{2}+C\frac{1}{\epsilon_{1}}\|\nabla u_{\infty}\|_{L^{2}}^{2}
\end{eqnarray*}
and
\begin{eqnarray*}
B
\leq
\epsilon_{1}|u_{\infty}|_{L^{2}_{1}}^{2}+C\frac{1}{\epsilon_{1}}|F_{\infty}|_{L^{2}_{1}},
\end{eqnarray*}
where $|f|_{L^{p}_{1}}=\|xf\|_{L^{p}}.$

When $|\alpha|\geq1,$ one derives
\begin{eqnarray*}
A
\leq
\|x\nabla \partial_{x}^{\alpha}u_{\infty}\|_{L^{2}}\| \partial_{x}^{\alpha}u_{\infty}\|_{L^{2}}
\leq
\epsilon_{2} |\nabla^{2}u_{\infty}|_{H^{1}_{1}}^{2}
+C\frac{1}{\epsilon_{2}} \|\nabla u_{\infty}\|_{H^{1}}^{2}
\end{eqnarray*}
and
\begin{eqnarray*}
&&B
=
-Re\int_{\mathbb{R}^{3}}\nabla(|x|^{2})\partial_{x}^{\alpha-1}F_{\infty}
\partial_{x}^{\alpha}\bar{u}_{\infty}\mbox{d}x
-Re\int_{\mathbb{R}^{3}}|x|^{2}\partial_{x}^{\alpha-1}F_{\infty}\partial_{x}^{\alpha+1}\bar{u}_{\infty}\mbox{d}x\nonumber\\
&&\quad\leq
\|x \partial_{x}^{\alpha-1}F_{\infty}\|_{L^{2}}\| \partial_{x}^{\alpha}u_{\infty}\|_{L^{2}}
+\|x \partial_{x}^{\alpha-1}F_{\infty}\|_{L^{2}}\| x\partial_{x}^{\alpha+1}u_{\infty}\|_{L^{2}}\nonumber\\
&&\quad\leq
\frac{1}{2}|F_{\infty}|_{H^{1}_{1}}^{2}
+\frac{1}{2}\|u_{\infty}\|_{H^{2}}^{2}
+\epsilon_{2}|\nabla u_{\infty}|_{H^{2}_{1}}^{2}
+C\frac{1}{\epsilon_{2}}|F_{\infty}|_{H^{1}_{1}}^{2},
\end{eqnarray*}
where $|f|_{H^{k}_{1}}=\|xf\|_{H^{k}}.$

Substituting the above estimates into (\ref{wenergy1}), we obtain
\begin{eqnarray*}
&&\frac{1}{2}\frac{\mbox{d}}{\mbox{d}t}| u_{\infty}|_{H^{2}_{1}}^{2}
+|\nabla u_{\infty}|_{H^{2}_{1}}^{2}\nonumber\\
&&\leq
\epsilon_{1} |u_{\infty}|_{L^{2}_{1}}^{2}
+\epsilon_{2} |\nabla u_{\infty}|_{H^{2}_{1}}^{2}
+C\frac{1}{\epsilon_{1}}\|u_{\infty}\|_{H^{2}}^{2}
+C\|u_{\infty}\|_{H^{2}}^{2}\nonumber\\
&&\quad+C\frac{1}{\epsilon_{2}}\|u_{\infty}\|_{H^{2}}^{2}
+C(1+\frac{1}{\epsilon_{1}})|F_{\infty}|_{H^{1}_{1}}^{2}
+C\frac{1}{\epsilon_{2}}|F_{\infty}|_{H^{1}_{1}}^{2}.
\end{eqnarray*}
Choosing $\epsilon_{2}$ small enough and employing Lemmas \ref{lem404}(ii) and \ref{lem406} yield
\begin{eqnarray}\label{wenergy3}
&&\frac{1}{2}\frac{\mbox{d}}{\mbox{d}t}| u_{\infty}|_{H^{2}_{1}}^{2}
+|\nabla u_{\infty}|_{H^{2}_{1}}^{2}\nonumber\\
&&\leq
C[\epsilon_{1} |u_{\infty}|_{L^{2}_{1}}^{2}
+(\frac{1}{\epsilon_{1}}+1)\|u_{\infty}\|_{H^{2}}^{2}
+(\frac{1}{\epsilon_{1}}+1)|F_{\infty}|_{H^{1}_{1}}^{2}]\nonumber\\
&&\leq C[\epsilon_{1} \|x \nabla u_{\infty}\|_{L^{2}}^{2}
+\epsilon_{1}\| \nabla u_{\infty}\|_{L^{2}}^{2}
+(\frac{1}{\epsilon_{1}}+1)\|u_{\infty}\|_{H^{2}}^{2}
+(\frac{1}{\epsilon_{1}}+1)|F_{\infty}|_{H^{1}_{1}}^{2}].
\end{eqnarray}

Choosing $\epsilon_{1}$ small enough, then adding
(\ref{wenergy3}) to $d_{1}\times (\ref{energy3})$
with $d_{1}$ sufficiently large,
one deduces
\begin{eqnarray*}
\frac{1}{2}\frac{\mbox{d}}{\mbox{d}t}(d_{1}\| u_{\infty}\|_{H^{2}_{1}}^{2})
+d\| u_{\infty}\|_{H^{3}_{1}}^{2}
\leq
C
\|F_{\infty}\|_{H^{1}_{1}}^{2}.
\end{eqnarray*}
We thus complete the proof of Proposition \ref{prop606}.
\end{proof}
Based on the weighted energy estimates provided by Proposition \ref{prop606},
we next show the existence of $(1-\mbox{e}^{-TA})^{-1}$ in high frequency.
\begin{prop}\label{prop605}
We claim that:
\begin{enumerate}
  \item
  If $\operatorname{supp}\hat{u}_{\infty0}\subset\{|\xi|\geq r_{1}\}$ and
  $u_{\infty0}\in H_{1}^{2}$, then
  $\operatorname{supp}\mathcal{F}[\mbox{e}^{-tA}u_{\infty0}]\subset\{|\xi|\geq r_{1}\}$ and
  \begin{equation*}
    \|\mbox{e}^{-tA}u_{\infty0}\|_{H_{1}^{k}}\leq C\mbox{e}^{-at}\|u_{\infty0}\|_{H_{1}^{2}}
  \end{equation*}
  for all $t\geq0.$
  \item If $\operatorname{supp}\hat{F}_{\infty}\subset\{|\xi|\geq r_{1}\}$ and
  $F_{\infty}\in L^{2}(0, T;H_{1}^{1})$, then
  $\operatorname{supp}\mathcal{F}[\mbox{e}^{-tA}F_{\infty}]\subset\{|\xi|\geq r_{1}\}$ and
  \begin{equation*}
    \left\|\int_{0}^{t}\mbox{e}^{-(t-s)A}F_{\infty}(s)\mbox{d}s\right\|_{H_{1}^{2}}\leq C\left\{\int_{0}^{t}e^{-a(t-s)}\|F_{\infty}\|^{2}_{H_{1}^{1}}\mbox{d}s\right\}^{\frac{1}{2}}
  \end{equation*}
  for $t\in [0, T]$ with $C$ depending on $T.$
  \item
  The spectral radius of $\mbox{e}^{-TA}$ in $X_{\infty}^{2}$ is less than 1.
  Accordingly, $1-\mbox{e}^{-TA}$ has a bounded inverse $(1-\mbox{e}^{-TA})^{-1}$ on
  $X_{\infty}$ and there holds
  \begin{equation*}
    \|(1-\mbox{e}^{-TA})^{-1}u\|_{H_{1}^{2}}\leq C\|u\|_{H_{1}^{2}}
  \end{equation*}
  provided that $u\in X_{\infty}^{2}.$
\end{enumerate}
\end{prop}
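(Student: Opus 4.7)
The plan is to build all three assertions on Proposition \ref{prop606} (the weighted energy estimate) together with the Fourier-multiplier representation of the semigroup $\mbox{e}^{-tA}$. The support preservation in (1) and (2) is immediate since $\mbox{e}^{-tA}$ is multiplication by $\mbox{e}^{-t(1+i)|\xi|^{2}}$ on the Fourier side and therefore cannot alter frequency-support; the same goes for the Duhamel integral $\int_{0}^{t}\mbox{e}^{-(t-s)A}F_{\infty}(s)\,\mbox{d}s$ in (2).

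For the decay bound in (1), I would apply Proposition \ref{prop606} to $u_{\infty}(t):=\mbox{e}^{-tA}u_{\infty0}$, which solves (\ref{glhigh}) with $F_{\infty}=0$, to obtain $\frac{1}{2}\frac{\mbox{d}}{\mbox{d}t}\|u_{\infty}\|_{H_{1}^{2}}^{2}+d\|u_{\infty}\|_{H_{1}^{3}}^{2}\leq 0$. The high-frequency cutoff combined with Lemmas \ref{lem404}(ii) and \ref{lem406} produces a lower bound $\|u_{\infty}\|_{H_{1}^{3}}^{2}\geq (2a/d)\|u_{\infty}\|_{H_{1}^{2}}^{2}$ for some $a>0$, and differential Gronwall then delivers $\|u_{\infty}(t)\|_{H_{1}^{2}}\leq \mbox{e}^{-at}\|u_{\infty0}\|_{H_{1}^{2}}$; the case $k=0,1$ follows from $\|\cdot\|_{H_{1}^{k}}\leq C\|\cdot\|_{H_{1}^{2}}$. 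For (2), the same Proposition \ref{prop606} with zero initial datum yields $\frac{\mbox{d}}{\mbox{d}t}\|u_{\infty}\|_{H_{1}^{2}}^{2}+2a\|u_{\infty}\|_{H_{1}^{2}}^{2}\leq C\|F_{\infty}\|_{H_{1}^{1}}^{2}$; multiplying by $\mbox{e}^{2at}$ and integrating on $[0,t]$ produces the stated bound after taking square roots.

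For (3), applying (1) at $t=nT$ gives $\|\mbox{e}^{-nTA}u\|_{H_{1}^{2}}\leq C\mbox{e}^{-anT}\|u\|_{H_{1}^{2}}$ for $u\in X_{\infty}^{2}$, and the spectral radius formula yields $\rho_{X_{\infty}^{2}}(\mbox{e}^{-TA})\leq \lim_{n\to\infty}(C\mbox{e}^{-anT})^{1/n}=\mbox{e}^{-aT}<1$; consequently the Neumann series $\sum_{n\geq 0}\mbox{e}^{-nTA}$ converges in operator norm on $X_{\infty}^{2}$ to $(1-\mbox{e}^{-TA})^{-1}$, and summing the bound from (1) at times $nT$ delivers the stated inequality. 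The main obstacle is the spectral-gap inequality $\|u_{\infty}\|_{H_{1}^{3}}^{2}\gtrsim \|u_{\infty}\|_{H_{1}^{2}}^{2}$: the weight $(1+|x|)$ in the definition of $H_{1}^{k}$ does not commute with the frequency cutoff $\{|\xi|\geq r_{1}\}$, so one must combine Lemma \ref{lem404}(ii) with Lemma \ref{lem406} and iterate order by order to propagate the gap through every term of the weighted norm. Once that gap is in hand, the remainder of the proposition reduces to a routine semigroup-plus-Gronwall argument.
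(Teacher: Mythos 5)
Your proposal is correct and follows essentially the same route as the paper: apply Proposition \ref{prop606} with $F_{\infty}=0$ (resp.\ with zero initial datum) and Gronwall to get claims 1 and 2, then the spectral radius formula and a Neumann series for claim 3. The only remark is that the ``spectral-gap'' inequality you single out as the main obstacle is immediate, since by definition of the weighted norms $\|u_{\infty}\|_{H_{1}^{3}}\geq\|u_{\infty}\|_{H_{1}^{2}}$ (the high-frequency localization has already been exploited inside the proof of Proposition \ref{prop606}), so no order-by-order iteration with Lemmas \ref{lem404}(ii) and \ref{lem406} is needed at that step.
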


\begin{proof}
\begin{enumerate}
  \item
  From (\ref{inhigh}) we derive
  \begin{eqnarray}\label{inhigh1}
\frac{1}{2}\frac{\mbox{d}}{\mbox{d}t}\| u_{\infty}\|_{H^{2}_{1}}^{2}
+d\| u_{\infty}\|_{H^{2}_{1}}^{2}
\leq
C
\|F_{\infty}\|_{H^{1}_{1}}^{2},
\end{eqnarray}
Let $F_{\infty}=0$ in (\ref{inhigh1}), then one finds
\begin{eqnarray*}
\| u_{\infty}\|_{H^{2}_{1}}^{2}
\leq
\mbox{e}^{-2dt}
\| u_{\infty0}\|_{H^{2}_{1}}^{2},
\end{eqnarray*}
So there exists $a=d$ such that
\begin{eqnarray*}
\| u_{\infty}\|_{H^{2}_{1}}
\leq
\mbox{e}^{-at}
\| u_{\infty0}\|_{H^{2}_{1}}.
\end{eqnarray*}
This proves claim 1.
  \item
  The inequality in claim 2 is proved by setting $ u_{\infty0}=0$ in (\ref{inhigh1}).
  \item
  Let $\operatorname{supp}\hat{u}\subset \{|\xi|\geq r_{1}\}$.
  From claim 1, we know
  \begin{equation*}
    \|(\mbox{e}^{-TA})^{j}u\|_{H_{1}^{2}}
    =\|\mbox{e}^{-jTA}u\|_{H_{1}^{2}}
    \leq \mbox{e}^{-jat}
\| u\|_{H^{2}_{1}}.
  \end{equation*}
Consequently, we find $ \|(\mbox{e}^{-TA})^{j}\|\leq \mbox{e}^{-jat}$.
This leads to
\begin{equation*}
  \lim_{j\rightarrow \infty}\|(\mbox{e}^{-TA})^{j}\|^{\frac{1}{j}}
  \leq \lim_{j\rightarrow \infty}\mbox{e}^{-at} <1.
\end{equation*}
\end{enumerate}
We thus complete the proof of Proposition \ref{prop605}.
\end{proof}

We are in a position to prove Proposition \ref{prop6050}.

\textbf{Proof of Proposition \ref{prop6050}:}

Invoking (\ref{inhigh}) yields
\begin{eqnarray}\label{6071}
&&\|u_{\infty}\|_{H_{1}^{2}}^{2}
+\|u_{\infty}\|_{L^{2}(0,T;H_{1}^{3})}^{2}\nonumber\\
&&\leq C\{
\|u_{0\infty}\|_{H_{1}^{2}}^{2}
+\|F_{\infty}\|_{L^{2}(0,T;H_{1}^{1})}^{2}
\}\nonumber\\
&&\leq C\left\{
\left\|(1-\mbox{e}^{-TA})^{-1}\int_{0}^{T}
\mbox{e}^{-(T-s)A}F_{\infty}\mbox{d}s\right\|_{H_{1}^{2}}^{2}
+\|F_{\infty}\|_{L^{2}(0,T;H_{1}^{1})}^{2}
\right\}.
\end{eqnarray}
According to claim 3 and claim 2 of Proposition \ref{prop605}, one finds
\begin{eqnarray}\label{6072}
(\ref{6071})
&\leq& C\left\{
\left\|\int_{0}^{T}
\mbox{e}^{-(T-s)A}F_{\infty}\mbox{d}s\right\|_{H_{1}^{2}}^{2}
+\|F_{\infty}\|_{L^{2}(0,T;H_{1}^{1})}^{2}
\right\}\nonumber\\
&\leq& C
\|F_{\infty}\|_{L^{2}(0,T;H_{1}^{1})}^{2}.
\end{eqnarray}
Note that $u_{\infty}$ satisfies (\ref{glhigh}).
This leads to
\begin{eqnarray*}
\|\partial_{t}u_{\infty}\|_{L^{2}(0, T;H_{1}^{1})}
&\leq& C\|u_{\infty}\|_{L^{2}(0, T;H_{1}^{3})}
+C\|F_{\infty}\|_{L^{2}(0, T;H_{1}^{1})}\nonumber\\
&\leq& C\|F_{\infty}\|_{L^{2}(0, T;H_{1}^{1})}.\quad \text{(by (\ref{6071}) and (\ref{6072}))}
\end{eqnarray*}
We thus complete the proof of Proposition \ref{prop6050}.

\section{Estimates of the nonlinear and inhomogeneous terms}
This section deals with the nonlinear and inhomogeneous terms.
We have the following Proposition.
\begin{prop}\label{prop801}
For $u=u_{1}+u_{\infty},$ there holds the following estimates
\begin{eqnarray}
&&\|F_{1}(u,g)\|_{L^{2}(0, T;L^{1}_{1})}
\leq C\|\{u_{1},u_{\infty}\}\|_{Z(0, T)}^{3}+C\|g\|_{L^{2}(0, T;L^{1}_{1})},\label{f1ug}\\
&&\|F_{\infty}(u,g)\|_{L^{2}(0, T;H^{1}_{1})}
\leq C\|\{u_{1},u_{\infty}\}\|_{Z(0, T)}^{3}+C\|g\|_{L^{2}(0, T;H^{1}_{1})},\label{f2ug}\\
&&\|F_{1}(u,g)-F_{1}(v,g)\|_{L^{2}(0, T;L^{1}_{1})}
\leq C(\|\{u_{1},u_{\infty}\}\|_{Z(0, T)}^{2}
+\|\{v_{1},v_{\infty}\}\|_{Z(0, T)}^{2})\nonumber\\
&&\qquad\qquad\qquad\qquad\qquad\qquad\qquad\qquad\times(\|\{u_{1}-v_{1},u_{\infty}-v_{\infty}\}\|_{Z(0, T)}),\label{f3ug}\\
&&\|F_{\infty}(u,g)-F_{\infty}(v,g)\|_{L^{2}(0, T;H^{1}_{1})}
\leq C(\|\{u_{1},u_{\infty}\}\|_{Z(0, T)}^{2}
+\|\{v_{1},v_{\infty}\}\|_{Z(0, T)}^{2})\nonumber\\
&&\qquad\qquad\qquad\qquad\qquad\qquad\qquad\qquad\times(\|\{u_{1}-v_{1},u_{\infty}-v_{\infty}\}\|_{Z(0, T)}).\label{f4ug}
\end{eqnarray}
\end{prop}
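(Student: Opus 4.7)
The strategy is a systematic multilinear Hölder–Sobolev estimate. My plan is to decompose $F(u,g)=|u|^{2}u+g$ via $u=u_{1}+u_{\infty}$, estimate each resulting product separately in $L^{2}(0,T;L^{1}_{1})$ or $L^{2}(0,T;H^{1}_{1})$, and exploit the Sobolev-type embeddings of Lemmas \ref{lem201}, \ref{lem403}, \ref{lem404}, \ref{lem405}. The first reduction is to remove the projectors: since $P_{1}$ and $P_{\infty}$ are convolutions with Schwartz kernels $\chi_{j}=\mathcal{F}^{-1}\hat{\chi}_{j}$, the splitting $|x|\leq|x-y|+|y|$ used in Lemma \ref{lem405} yields $\|P_{1}f\|_{L^{1}_{1}}\leq C\|f\|_{L^{1}_{1}}$ and, combined with Lemma \ref{lem404}(i), $\|P_{\infty}f\|_{H^{1}_{1}}\leq C\|f\|_{H^{1}_{1}}$. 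Applied to $g$ this generates the inhomogeneous contributions $C\|g\|_{L^{2}(0,T;L^{1}_{1})}$ and $C\|g\|_{L^{2}(0,T;H^{1}_{1})}$ in (\ref{f1ug})--(\ref{f2ug}).

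For the cubic term I expand $|u|^{2}u=(u_{1}+u_{\infty})\overline{(u_{1}+u_{\infty})}(u_{1}+u_{\infty})$ into eight monomials of the form $v^{(1)}v^{(2)}v^{(3)}$. For each monomial I route the weight $(1+|x|)$ onto the factor bearing the most favorable weighted norm. Monomials with at least one $u_{\infty}$-factor exploit $\|(1+|x|)u_{\infty}\|_{L^{\infty}}\leq C\|u_{\infty}\|_{H^{2}_{1}}$ (Lemma \ref{lem201} applied to $(1+|x|)u_{\infty}$) and $\|u_{\infty}\|_{L^{2}_{1}}\leq\|u_{\infty}\|_{H^{2}_{1}}$; the remaining factors are controlled in $L^{\infty}$ by Lemma \ref{lem201} for $u_{\infty}$ and by the low-frequency bound $\|u_{1}\|_{L^{p}}\leq C\|u_{1}\|_{L^{2}}$ of Lemma \ref{lem403}(first) for $u_{1}$. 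For the $H^{1}_{1}$ estimate the Leibniz rule places one gradient on one factor, and the weighted gradient bounds $\|\nabla u_{1}\|_{H^{k}_{1}}\leq C(\|u_{1}\|_{L^{2}}+\|x\nabla u_{1}\|_{L^{2}})$ of Lemma \ref{lem403}(third) together with $\nabla u_{\infty}\in L^{2}(0,T;H^{2}_{1})$ (from the $Y$-norm) absorb it. After integrating in time, Hölder allocates one factor to $L^{2}_{t}$ (typically the $L^{2}(0,T;H^{3}_{1})$ component of $Y$) while the other two sit in $L^{\infty}_{t}$ via the $C([0,T];\cdot)$-embeddings of the $H^{1}_{t}$ parts of $X$ and of the $H^{2}_{1}$-component of $Y$, producing a cubic bound.

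The delicate case is the purely low-frequency monomial $|u_{1}|^{2}u_{1}$, because $X(0,T)$ does not a priori include a weighted $L^{2}$ bound on $u_{1}$ itself. Here I exploit the odd symmetry $u_{1}(-x)=-u_{1}(x)$, which makes $|u_{1}|^{2}u_{1}$ odd and hence $\widehat{|u_{1}|^{2}u_{1}}(0)=0$, so that after applying $P_{1}$ the Fourier-side argument from the proof of Proposition \ref{myprop3} (writing $\hat h(\xi)=\xi\cdot\!\int_{0}^{1}\nabla\hat h(s\xi)\,ds$ and using the compact frequency support) combined with Lemma \ref{lem403}(fourth) controls $\|P_{1}(|u_{1}|^{2}u_{1})\|_{L^{2}_{1}}$ and hence its $L^{1}_{1}$-norm in terms of $\|u_{1}\|_{L^{p}}$, which in turn is bounded by $\|u_{1}\|_{L^{2}}$ via Lemma \ref{lem403}(first). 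Finally, the difference estimates (\ref{f3ug})--(\ref{f4ug}) follow from the polarization identity
\begin{equation*}
|u|^{2}u-|v|^{2}v=(u-v)|u|^{2}+v(u-v)\bar{u}+v^{2}(\bar{u}-\bar{v}),
\end{equation*}
which rewrites the difference as a sum of trilinears in $(u-v,u,v)$ of exactly the same structural type as before, so the identical Hölder–Sobolev scheme yields the advertised quadratic prefactor. The main obstacle is therefore the bookkeeping of weights across the eight cubic monomials and, in particular, the weighted handling of the pure $u_{1}^{3}$ contribution, which must exploit the oddness and compact frequency support of $u_{1}$.
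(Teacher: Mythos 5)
Your overall scheme (strip the projectors via their Schwartz kernels, expand $|u|^{2}u$ into monomials in $u_{1},u_{\infty}$, estimate each by H\"older together with Lemmas \ref{lem201}, \ref{lem403}--\ref{lem405}, and get the differences from the trilinear identity $|u|^{2}u-|v|^{2}v=(u-v)|u|^{2}+v(u-v)\bar u+v^{2}(\bar u-\bar v)$) is exactly the paper's route. The problem is your treatment of the ``delicate'' pure low-frequency monomial $u_{1}^{3}$ in the $L^{1}_{1}$ bound (\ref{f1ug}). You propose to use oddness, $\widehat{|u_{1}|^{2}u_{1}}(0)=0$, and the Fourier-side argument of Proposition \ref{myprop3} to control $\|P_{1}(|u_{1}|^{2}u_{1})\|_{L^{2}_{1}}$ ``and hence its $L^{1}_{1}$-norm.'' That last inference is false on $\mathbb{R}^{3}$: a bound on $\|(1+|x|)f\|_{L^{2}}$ gives no control of $\|(1+|x|)f\|_{L^{1}}$ (one would need roughly the weight $(1+|x|)^{3}$ in $L^{2}$ to dominate the weighted $L^{1}$ norm by Cauchy--Schwarz). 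Moreover, the cancellation $\hat F_{1}(0)=0$ plays no role at this stage; it is needed downstream, in Propositions \ref{myprop1} and \ref{myprop3}, where one divides by $1-e^{-T\hat A}\sim|\xi|^{2}$, not in estimating $\|F_{1}\|_{L^{1}_{1}}$ itself.

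The gap is easily repaired, and the repair is what the paper does: no weighted $L^{2}$ bound on $u_{1}$ itself is needed, because the weight can be loaded onto a single factor through Sobolev embedding,
\begin{equation*}
\|x\,u_{1}^{3}\|_{L^{1}}\le \|u_{1}\|_{L^{2}}\|u_{1}\|_{L^{3}}\|x u_{1}\|_{L^{6}},
\qquad
\|x u_{1}\|_{L^{6}}\le C\|\nabla(x u_{1})\|_{L^{2}}\le C\bigl(\|u_{1}\|_{L^{2}}+\|x\nabla u_{1}\|_{L^{2}}\bigr),
\end{equation*}
and both $\|u_{1}\|_{L^{2}}$ and $\|x\nabla u_{1}\|_{L^{2}}$ are controlled uniformly in time by the $X(0,T)$-norm (through the embedding $H^{1}(0,T)\hookrightarrow C([0,T])$), while $\|u_{1}\|_{L^{3}},\|u_{1}\|_{L^{6}}\le C\|u_{1}\|_{L^{2}}$ by Lemma \ref{lem403}. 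With this replacement (and the analogous weighted Leibniz estimates for the $H^{1}_{1}$ bound (\ref{f2ug}), where the paper again uses $\|xu_{1}\|_{L^{6}}$, $\|u_{1}\|_{L^{\infty}}\le C\|u_{1}\|_{L^{2}}$ and the $H^{2}_{1}$ control of $u_{\infty}$), your argument coincides with the paper's proof; the remaining bookkeeping of the mixed monomials and the time integration (finite $T$, so $L^{\infty}_{t}$ bounds suffice) is as you describe.
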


\begin{proof}
First, one easily derives from Lemma \ref{lem403} that
\begin{eqnarray}\label{nonlinear1}
&&\|P_{1}(|u|^{2}u)\|_{L^{1}_{1}}
\leq
C(\|u_{1}^{3}\|_{L^{1}_{1}}
+\|u_{1}^{2}u_{\infty}\|_{L^{1}_{1}}
+\|u_{\infty}^{2}u_{1}\|_{L^{1}_{1}}
+\|u_{\infty}^{3}\|_{L^{1}_{1}}
)\nonumber\\
&&\qquad\qquad\qquad=I_{1}+I_{2}+I_{3}+I_{4}.
\end{eqnarray}

Using interpolation and $\|f\|_{L^{6}(\mathbb{R}^{3})}\leq C\|\nabla f\|_{L^{2}(\mathbb{R}^{3})}$,
we obtain
\begin{eqnarray*}
&&I_{1}
=C\|u_{1}^{3}\|_{L^{1}_{1}}\nonumber\\
&&\leq
C\|u_{1}^{3}\|_{L^{1}}+C\|xu_{1}^{3}\|_{L^{1}}\nonumber\\
&&\leq
C\|u_{1}\|_{L^{2}}\|u_{1}\|_{L^{3}}\|u_{1}\|_{L^{6}}
+C\|u_{1}\|_{L^{2}}\|u_{1}\|_{L^{3}}\|xu_{1}\|_{L^{6}}\nonumber\\
&&\leq
C\|u_{1}\|_{L^{2}}\|u_{1}\|_{L^{2}}^{\frac{1}{2}}\|u_{1}\|_{L^{6}}^{\frac{1}{2}}\|u_{1}\|_{L^{6}}
+C\|u_{1}\|_{L^{2}}\|u_{1}\|_{L^{2}}^{\frac{1}{2}}\|u_{1}\|_{L^{6}}^{\frac{1}{2}}\|u_{1}\|_{L^{6}}\|xu_{1}\|_{L^{6}}\nonumber\\
&&\leq
C\|u_{1}\|_{L^{2}}^{\frac{3}{2}}\|\nabla u_{1}\|_{L^{2}}^{\frac{3}{2}}
+C\|u_{1}\|_{L^{2}}^{\frac{3}{2}}\|\nabla u_{1}\|_{L^{2}}^{\frac{1}{2}}(\|u_{1}\|_{L^{2}}+\| x\nabla u_{1}\|_{L^{2}})\nonumber\\
&&\leq
C\|u_{1}\|_{L^{2}}^{3}+C\|u_{1}\|_{L^{2}}^{2}\|x\nabla u_{1}\|_{L^{2}},
\end{eqnarray*}
where in the last inequality, we have invoked Lemma \ref{lem403}.

Similarly, it finds
\begin{eqnarray*}
&&I_{2}
=C\|u_{1}^{2}u_{\infty}\|_{L^{1}_{1}}\nonumber\\
&&\leq
C\|u_{1}^{2}u_{\infty}\|_{L^{1}}+C\|xu_{1}^{2}u_{\infty}\|_{L^{1}}\nonumber\\
&&\leq
C\|u_{\infty}\|_{L^{2}}\|u_{1}\|_{L^{3}}\|u_{1}\|_{L^{6}}
+C\|u_{\infty}\|_{L^{2}}\|u_{1}\|_{L^{3}}\|xu_{1}\|_{L^{6}}\nonumber\\
&&\leq
C\|u_{\infty}\|_{L^{2}}\|u_{1}\|_{L^{2}}^{\frac{1}{2}}\|u_{1}\|_{L^{6}}^{\frac{3}{2}}
+C\|u_{\infty}\|_{L^{2}}\|u_{1}\|_{L^{2}}^{\frac{1}{2}}\|u_{1}\|_{L^{6}}^{\frac{1}{2}}\|\nabla (xu_{1})\|_{L^{2}}\nonumber\\
&&\leq
C\|u_{\infty}\|_{L^{2}}\| u_{1}\|_{L^{2}}^{\frac{1}{2}}\|\nabla u_{1}\|_{L^{2}}^{\frac{3}{2}}
+C\|u_{\infty}\|_{L^{2}}\| u_{1}\|_{L^{2}}^{\frac{1}{2}}\|\nabla u_{1}\|_{L^{2}}^{\frac{1}{2}}(\|u_{1}\|_{L^{2}}+\| x\nabla u_{1}\|_{L^{2}})\nonumber\\
&&\leq
C\|u_{\infty}\|_{L^{2}}\| u_{1}\|_{L^{2}}^{2}+C\|u_{\infty}\|_{L^{2}}\|x\nabla u_{1}\|_{L^{2}},
\end{eqnarray*}
\begin{eqnarray*}
&&I_{3}
=C\|u_{\infty}^{2}u_{1}\|_{L^{1}_{1}}\nonumber\\
&&\leq
C\|u_{\infty}^{2}u_{1}\|_{L^{1}}+C\|xu_{\infty}^{2}u_{1}\|_{L^{1}}\nonumber\\
&&\leq
C\|u_{\infty}\|_{L^{2}}\|u_{\infty}\|_{L^{3}}(\|u_{1}\|_{L^{6}}
+\|xu_{1}\|_{L^{6}})\nonumber\\
&&\leq
C\|u_{\infty}\|_{L^{2}}\|u_{\infty}\|_{L^{2}}^{\frac{1}{2}}\|\nabla u_{\infty}\|_{L^{2}}^{\frac{1}{2}}
(\|\nabla u_{1}\|_{L^{2}}
+\|\nabla (xu_{1})\|_{L^{2}})\nonumber\\
&&\leq
C\|u_{\infty}\|_{L^{2}}^{\frac{3}{2}}\|\nabla u_{\infty}\|_{L^{2}}^{\frac{1}{2}}
(\| u_{1}\|_{L^{2}}
+\|x\nabla u_{1}\|_{L^{2}}),
\end{eqnarray*}
and
\begin{eqnarray*}
&&I_{4}
=C\|u_{\infty}^{3}\|_{L^{1}_{1}}\nonumber\\
&&\leq
C\|u_{\infty}^{3}\|_{L^{1}}+C\|xu_{\infty}^{3}\|_{L^{1}}\nonumber\\
&&\leq
C\|u_{\infty}\|_{L^{2}}\|u_{\infty}\|_{L^{3}}(\|u_{\infty}\|_{L^{6}}
+\|xu_{\infty}\|_{L^{6}})\nonumber\\
&&\leq
C\|u_{\infty}\|_{L^{2}}^{\frac{3}{2}}\|\nabla u_{\infty}\|_{L^{2}}^{\frac{1}{2}}
(\| \nabla u_{\infty}\|_{L^{2}}
+\|x \nabla u_{\infty}\|_{L^{2}}).
\end{eqnarray*}
Inserting the above estimates of $I_{1}-I_{4}$ to
(\ref{nonlinear1}), one obtains (\ref{f1ug}).

Employing Lemmas \ref{lem404} and \ref{lem405}, we easily find
\begin{eqnarray}\label{nonlinear2}
&&\|P_{\infty}(|u|^{2}u)\|_{H^{1}_{1}}
\leq
C(\|u_{1}^{3}\|_{H^{1}_{1}}
+\|u_{1}^{2}u_{\infty}\|_{H^{1}_{1}}
+\|u_{\infty}^{2}u_{1}\|_{H^{1}_{1}}
+\|u_{\infty}^{3}\|_{H^{1}_{1}}
)\nonumber\\
&&\qquad\qquad\qquad\qquad=J_{1}+J_{2}+J_{3}+J_{4}.
\end{eqnarray}
Lemma (\ref{lem403}) again leads to
\begin{eqnarray*}
&&J_{1}
=C\|u_{1}^{3}\|_{H^{1}_{1}}\nonumber\\
&&\leq
C\|u_{1}^{3}\|_{H^{1}}+C\|xu_{1}^{3}\|_{H^{1}}\nonumber\\
&&\leq C\|u_{1}^{3}\|_{L^{2}}
+C\|\nabla (u_{1}^{3})\|_{L^{2}}
+C\|x u_{1}^{3}\|_{L^{2}}
+C\|\nabla (x u_{1}^{3})\|_{L^{2}}\nonumber\\
&&\leq C\|u_{1}\|_{L^{6}}^{3}
+C\|u_{1}^{2}\nabla u_{1}\|_{L^{2}}
+C\|x u_{1}\|_{L^{6}}\| u_{1}\|_{L^{\infty}}\| u_{1}\|_{L^{3}}
+C\|x u_{1}^{2}\nabla u_{1}\|_{L^{2}}\nonumber\\
&&\leq C\|u_{1}\|_{L^{2}}^{3}
+C\|u_{1}\|_{L^{\infty}}^{2}\|\nabla u_{1}\|_{L^{2}}
+(\|u_{1}\|_{L^{2}}+\|x \nabla u_{1}\|_{L^{2}})\|u_{1}\|_{L^{2}}\|u_{1}\|_{L^{2}}^{\frac{1}{2}}\|\nabla u_{1}\|_{L^{2}}^{\frac{1}{2}}\nonumber\\
&&\quad+\|u_{1}\|_{L^{\infty}}^{2}\|x \nabla u_{1}\|_{L^{2}}\nonumber\\
&&\leq C\|u_{1}\|_{L^{2}}^{3}+C\|u_{1}\|_{L^{2}}^{2}\|x \nabla u_{1}\|_{L^{2}},
\end{eqnarray*}

\begin{eqnarray*}
&&J_{2}
=C\|u_{1}^{2}u_{\infty}\|_{H^{1}_{1}}\nonumber\\
&&\leq
C\|u_{1}^{2}u_{\infty}\|_{H^{1}}+C\|xu_{1}^{2}u_{\infty}\|_{H^{1}}\nonumber\\
&&\leq C\|u_{1}^{2}u_{\infty}\|_{L^{2}}
+C\|\nabla (u_{1}^{2}u_{\infty})\|_{L^{2}}
+C\|x u_{1}^{2}u_{\infty}\|_{L^{2}}
+C\|\nabla (x u_{1}^{2}u_{\infty})\|_{L^{2}}\nonumber\\
&&\leq C\|u_{1}\|_{L^{\infty}}^{2}\|u_{\infty}\|_{L^{2}}
+C\|u_{1}^{2}\nabla u_{\infty}\|_{L^{2}}
+C\|u_{1}u_{\infty}\nabla u_{1}\|_{L^{2}}
+C\|x u_{\infty}\|_{L^{2}}\| u_{1}\|_{L^{\infty}}^{2}\nonumber\\
&&\quad+C\|x u_{1}^{2}\nabla u_{\infty}\|_{L^{2}}
+C\|x u_{1}u_{\infty}\nabla u_{1}\|_{L^{2}}\nonumber\\
&&\leq C\|u_{1}\|_{L^{2}}^{2}\|u_{\infty}\|_{L^{2}}
+C\|u_{1}\|_{L^{\infty}}^{2}\|\nabla u_{\infty}\|_{L^{2}}
+C\|u_{1}\|_{L^{\infty}}\|u_{\infty}\|_{L^{\infty}}\|\nabla u_{1}\|_{L^{2}}\nonumber\\
&&\quad+C\|x u_{\infty}\|_{L^{2}}\| u_{1}\|_{L^{\infty}}^{2}
+C\|x\nabla  u_{\infty}\|_{L^{2}}\| u_{1}\|_{L^{\infty}}^{2}
+\|u_{1}\|_{L^{\infty}}\|u_{\infty}\|_{L^{\infty}}\|x \nabla u_{1}\|_{L^{2}}
\nonumber\\
&&\leq C\|u_{1}\|_{L^{2}}^{2}(\|u_{\infty}\|_{L^{2}}+\| u_{\infty}\|_{H^{2}})
+C\|u_{1}\|_{L^{2}}^{2}(\|x  u_{\infty}\|_{L^{2}}+\|x \nabla u_{\infty}\|_{L^{2}})\nonumber\\
&&\quad+C\|u_{1}\|_{L^{2}}\|u_{\infty}\|_{H^{2}}\|x \nabla u_{1}\|_{L^{2}},
\end{eqnarray*}

\begin{eqnarray*}
&&J_{3}
=C\|u_{\infty}^{2}u_{1}\|_{H^{1}_{1}}\nonumber\\
&&\leq
C\|u_{\infty}^{2}u_{1}\|_{H^{1}}+C\|xu_{\infty}^{2}u_{1}\|_{H^{1}}\nonumber\\
&&\leq C\|u_{\infty}^{2}u_{1}\|_{L^{2}}
+C\|\nabla (u_{1}u_{\infty}^{2})\|_{L^{2}}
+C\|x u_{1}u_{\infty}^{2}\|_{L^{2}}
+C\|\nabla (x u_{1}u_{\infty}^{2})\|_{L^{2}}\nonumber\\
&&\leq C\|u_{\infty}\|_{L^{\infty}}^{2}\|u_{1}\|_{L^{2}}
+C\|u_{\infty}^{2}\nabla u_{1}\|_{L^{2}}
+C\|u_{1}u_{\infty}\nabla u_{\infty}\|_{L^{2}}\nonumber\\
&&\quad+C\|x u_{\infty}\|_{L^{2}}\| u_{1}\|_{L^{\infty}}\| u_{\infty}\|_{L^{\infty}}
+C\|x u_{\infty}^{2}\nabla u_{1}\|_{L^{2}}
+C\|x u_{1}u_{\infty}\nabla u_{\infty}\|_{L^{2}}\nonumber\\
&&\leq C\|u_{1}\|_{L^{2}}\|u_{\infty}\|_{H^{2}}^{2}
+C\|u_{1}\|_{L^{\infty}}\|u_{\infty}\|_{L^{\infty}}\|\nabla u_{\infty}\|_{L^{2}}
+C\|u_{1}\|_{L^{\infty}}\|u_{\infty}\|_{L^{\infty}}\|x  u_{\infty}\|_{L^{2}}\nonumber\\
&&\quad+\|u_{\infty}\|_{L^{\infty}}\|x \nabla u_{1}\|_{L^{2}}
+\|u_{1}\|_{L^{\infty}}\|u_{\infty}\|_{L^{\infty}}\|x \nabla u_{\infty}\|_{L^{2}}
\nonumber\\
&&\leq C\|u_{1}\|_{L^{2}}\|u_{\infty}\|_{H^{2}}^{2}
+C\|u_{1}\|_{L^{2}}\|u_{\infty}\|_{H^{2}}\|x  u_{\infty}\|_{L^{2}}\nonumber\\
&&\quad+\|u_{\infty}\|_{H^{2}}\|x \nabla u_{1}\|_{L^{2}}
+\|u_{1}\|_{L^{2}}\|u_{\infty}\|_{H^{2}}\|x \nabla u_{\infty}\|_{L^{2}},
\end{eqnarray*}
and
\begin{eqnarray*}
&&J_{4}
=C\|u_{\infty}^{3}\|_{H^{1}_{1}}\nonumber\\
&&\leq
C\|u_{\infty}^{3}\|_{H^{1}}+C\|xu_{\infty}^{3}\|_{H^{1}}\nonumber\\
&&\leq C\|u_{\infty}^{3}\|_{L^{2}}
+C\|\nabla (u_{\infty}^{3})\|_{L^{2}}
+C\|x u_{\infty}^{3}\|_{L^{2}}
+C\|\nabla (x u_{\infty}^{3})\|_{L^{2}}\nonumber\\
&&\leq C\|u_{\infty}\|_{H^{2}}^{3}
+C\|u_{\infty}\|_{H^{2}}^{2}\|x u_{\infty}\|_{L^{2}}
+C \|u_{\infty}\|_{H^{2}}^{2}\|x \nabla u_{\infty}\|_{L^{2}}.
\end{eqnarray*}
Substituting the above estimates of $J_{1}-J_{4}$ into (\ref{nonlinear2}),
we obtain (\ref{f2ug}).

Repeating the above process for $P_{j}(|u|^{2}u-|v|^{2}v)$ instead of
$P_{j}(|u|^{2}u)$ yields (\ref{f3ug}) and (\ref{f4ug}), we omit the details
and thus complete the proof of Proposition \ref{prop801}.
\end{proof}

\section{Proof of Theorem \ref{thm1}}

We will use iteration argument to prove Theorem \ref{thm1}.

 Let  $t\in [0, T]$. We define $u_{1}^{(0)}(t)$ and $u_{\infty}^{(0)}(t)$  as
 \begin{eqnarray}\label{801}
 \begin{cases}
   u_{1}^{(0)}(t)=\mbox{e}^{-tA}(1-\mbox{e}^{-TA})^{-1}\int_{0}^{T}\mbox{e}^{-(T-s)A}[P_{1}g]\mbox{d}s
+\int_{0}^{t}\mbox{e}^{-(t-s)A}[P_{1}g]\mbox{d}s, \\
   u_{\infty}^{(0)}(t)=\mbox{e}^{-tA}(1-\mbox{e}^{-TA})^{-1}\int_{0}^{T}\mbox{e}^{-(T-s)A}[P_{\infty}g]\mbox{d}s
+\int_{0}^{t}\mbox{e}^{-(t-s)A}[P_{\infty}g]\mbox{d}s.
\end{cases}
 \end{eqnarray}

We  inductively define $u_{1}^{(l)}(t)$ and $u_{\infty}^{(l)}(t)$ for $l\geq 1$ as
\begin{eqnarray}\label{802}
 \begin{cases}
   u_{1}^{(l)}(t)=\mbox{e}^{-tA}(1-\mbox{e}^{-TA})^{-1}
   \int_{0}^{T}\mbox{e}^{-(T-s)A}[F_{1}(u^{(l-1)}, g)]\mbox{d}s
+\int_{0}^{t}\mbox{e}^{-(t-s)A}[F_{1}(u^{(l-1)}, g)]\mbox{d}s, \\
   u_{\infty}^{(l)}(t)=\mbox{e}^{-tA}(1-\mbox{e}^{-TA})^{-1}
   \int_{0}^{T}\mbox{e}^{-(T-s)A}[F_{\infty}(u^{(l-1)}, g)]\mbox{d}s
+\int_{0}^{t}\mbox{e}^{-(t-s)A}[F_{\infty}(u^{(l-1)}, g)]\mbox{d}s,
\end{cases}
 \end{eqnarray}
where $u^{(l-1)}=u_{1}^{(l-1)}+u_{\infty}^{(l-1)}.$

It is easy to see that $ u_{1}^{(l)}(0)= u_{1}^{(l)}(T)$
and $ u_{\infty}^{(l)}(0)= u_{\infty}^{(l)}(T)$ for $l\geq0.$

We first show the existence of local solution to system (\ref{gl01})
satisfying
$u_{j}(0)=u_{j}(T)(j=0,1).$
\begin{prop}\label{prop804}
Let  $\delta_{2}>0$ be a constant small enough and let  $[g]\leq \delta_{2}.$
Then  there uniquely exists  solution
 $\{u_{1},u_{\infty}\}$ to system (\ref{gl01}) on $[0, T]$ in $B_{Z(0, T)}(C_{3}[g])$
 with $u_{j}(0)=u_{j}(T)(j=0,1)$.
 The uniqueness holds  in $B_{Z(0, T)}(C_{3}\delta_{2}).$
\end{prop}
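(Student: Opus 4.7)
The plan is to apply a contraction mapping argument to the iteration map
$\Phi: \{u_{1}, u_{\infty}\} \mapsto \{\tilde u_{1}, \tilde u_{\infty}\}$
defined by the right-hand sides of (\ref{802}), so that a fixed point of $\Phi$ yields the desired periodic-in-endpoints solution on $[0,T]$. The strategy is standard: verify self-mapping on a small closed ball, verify contraction there, and invoke Banach's fixed point theorem.

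First I would combine the linear estimates with the nonlinear estimates. By Proposition \ref{myprop1} applied to the low-frequency component and Proposition \ref{prop6050} applied to the high-frequency component (both built to absorb the periodicity condition via the operators $(1-e^{-TA})^{-1}$), we get
\begin{equation*}
\|\Phi(\{u_{1}, u_{\infty}\})\|_{Z(0,T)}
\leq C\|F_{1}(u,g)\|_{L^{2}(0,T;L^{1}_{1})}
+ C\|F_{\infty}(u,g)\|_{L^{2}(0,T;H^{1}_{1})}.
\end{equation*}
Plugging in (\ref{f1ug}) and (\ref{f2ug}) of Proposition \ref{prop801} gives
\begin{equation*}
\|\Phi(\{u_{1},u_{\infty}\})\|_{Z(0,T)} \leq C_{1}\|\{u_{1},u_{\infty}\}\|_{Z(0,T)}^{3} + C_{1}[g].
\end{equation*}
Setting $C_{3}=2C_{1}$, the ball $B_{Z(0,T)}(C_{3}[g])$ is mapped into itself as soon as $C_{1}(C_{3}[g])^{3}\leq C_{1}[g]$, i.e.\ as soon as $C_{3}^{3}[g]^{2}\leq 1$, which holds provided $\delta_{2}$ is chosen small enough (independent of $[g]$).

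Next I would verify contraction. For $\{u_{1},u_{\infty}\}, \{v_{1},v_{\infty}\}\in B_{Z(0,T)}(C_{3}\delta_{2})$, the same linear estimates together with the Lipschitz estimates (\ref{f3ug}) and (\ref{f4ug}) of Proposition \ref{prop801} yield
\begin{equation*}
\|\Phi(\{u_{1},u_{\infty}\}) - \Phi(\{v_{1},v_{\infty}\})\|_{Z(0,T)}
\leq C_{2}(C_{3}\delta_{2})^{2}\,\|\{u_{1}-v_{1},u_{\infty}-v_{\infty}\}\|_{Z(0,T)}.
\end{equation*}
Shrinking $\delta_{2}$ further so that $C_{2}(C_{3}\delta_{2})^{2}<\tfrac{1}{2}$, the map $\Phi$ becomes a strict contraction on $B_{Z(0,T)}(C_{3}[g])$, and Banach's fixed point theorem delivers a unique fixed point $\{u_{1},u_{\infty}\}$ in that ball. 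The endpoint condition $u_{j}(0)=u_{j}(T)$ is preserved along the iteration (as is evident from the formulas (\ref{802})) and therefore passes to the limit because $Z(0,T)$ embeds into $C([0,T];\cdot)$ for each of its components. The symmetry $u_{j}(-x)=-u_{j}(x)$ and the spectral support conditions are likewise preserved by $\Phi$, so the limit lies in $Z(0,T)$ as required.

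Finally, for the uniqueness claim in the larger ball $B_{Z(0,T)}(C_{3}\delta_{2})$, any other solution $\{v_{1},v_{\infty}\}\in B_{Z(0,T)}(C_{3}\delta_{2})$ is necessarily a fixed point of $\Phi$; applying the same contraction estimate (which is valid on all of $B_{Z(0,T)}(C_{3}\delta_{2})$, not just on the smaller self-mapping ball) forces $\{u_{1},u_{\infty}\}=\{v_{1},v_{\infty}\}$. The only delicate step is the simultaneous calibration of the two smallness conditions on $\delta_{2}$, one for self-mapping and one for contraction; this is routine because the nonlinearity is cubic, so both conditions reduce to $\delta_{2}^{2}$ being small relative to explicit constants coming from Propositions \ref{myprop1}, \ref{prop6050}, and \ref{prop801}.
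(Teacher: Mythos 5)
Your proposal is correct and takes essentially the same route as the paper: the paper runs the identical scheme as an explicit Picard iteration (\ref{801})--(\ref{802}), obtaining the uniform bound from Propositions \ref{myprop1} and \ref{prop6050} together with (\ref{f1ug})--(\ref{f2ug}), and the contraction estimate for successive differences from (\ref{f3ug})--(\ref{f4ug}), which is just the constructive form of your Banach fixed point argument. Your calibration of $\delta_{2}$ (one smallness condition for the self-map, one for the contraction, both quadratic in $\delta_{2}$ because the nonlinearity is cubic) matches the paper's choices of $\delta_{1}$ and $\delta_{2}$.
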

\begin{proof}
From Proposition \ref{myprop1}, Proposition \ref{prop6050}, (\ref{f1ug}) and (\ref{f2ug}),
we deduce that there is two  constants $C_{1}, C_{2}>0$  independent of $g$ and $l$
such that there holds
\begin{equation}\label{u1l0}
  \|\{u_{1}^{(0)},u_{\infty}^{(0)}\}\|_{Z(0, T)}
  \leq
  C_{2}[g]
\end{equation}
and such that for all $l\geq0$ there holds
\begin{equation}\label{u1l}
  \|\{u_{1}^{(l+1)},u_{\infty}^{(l+l)}\}\|_{Z(0, T)}
  \leq C_{1}\|\{u_{1}^{(l)},u_{\infty}^{(l)}\}\|_{Z(0, T)}^{3}
  +C_{2}[g].
\end{equation}
Setting $l=0$ in (\ref{u1l}) and employing (\ref{u1l0}) leads to
\begin{equation}\label{u1l1}
  \|\{u_{1}^{(1)},u_{\infty}^{(l)}\}\|_{Z(0, T)}
  \leq C_{1}\|\{u_{1}^{(0)},u_{\infty}^{(0)}\}\|_{Z(0, T)}^{3}
  +C_{2}[g]
  \leq (C_{1}C_{2}^{3}[g]^{2}+C_{2})[g].
\end{equation}
Choosing $\delta_{1}<\sqrt{\frac{1}{C_{1}C_{2}^{3}}}$ yields
\begin{equation}\label{u1l10}
  \|\{u_{1}^{(1)},u_{\infty}^{(1)}\}\|_{Z(0, T)}
  \leq C_{1}\|\{u_{1}^{(0)},u_{\infty}^{(0)}\}\|_{Z(0, T)}^{3}
  +C_{2}[g]
  \leq (1+C_{2})[g]
\end{equation}
provided $[g]\leq \delta_{1}.$

By induction, one easily concludes
\begin{equation}\label{u1l12}
  \|\{u_{1}^{(l)},u_{\infty}^{(l)}\}\|_{Z(0, T)}
  \leq (1+C_{2})[g]
\end{equation}
for all $l\geq 2$ if choosing $\delta_{1}<\sqrt{\frac{1}{C_{1}(C_{2}+1)^{3}}}$.

Next, we easily see that $\bar{u}_{j}^{(l)}=u_{j}^{(l+1)}-u_{j}^{(l)}$ for $j=1,\infty$
satisfy the following equation
\begin{eqnarray*}
\begin{cases}
  \partial_{t}\bar{u}_{1}^{(l)}+A \bar{u}_{1}^{(l)}=F_{1}(u^{(l)},g)-F_{1}(u^{(l-1)},g), \\
  \partial_{t}\bar{u}_{\infty}^{(l)}+A \bar{u}_{\infty}^{(l)}=F_{\infty}(u^{(l)},g)-F_{\infty}(u^{(l-1)},g),
   \end{cases}
\end{eqnarray*}
where $u^{(l)}=u_{1}^{(l)}+u_{\infty}^{(l)}.$

Invoking Proposition \ref{myprop1}, Proposition \ref{prop6050},
 (\ref{f3ug}), (\ref{f4ug}) and (\ref{u1l12}), one can deduce that
\begin{equation}\label{u1l13}
  \|\{u_{1}^{(l+1)}-u_{1}^{(l)},u_{\infty}^{(l+1)}-u_{\infty}^{(l)}\}\|_{Z(0, T)}
  \leq C(1+C_{2})^{2}[g]^{2}\|\{u_{1}^{(l)}-u_{1}^{(l-1)},u_{\infty}^{(l)}-u_{\infty}^{(l-1)}\}\|_{Z(0, T)}
\end{equation}
for all $l\geq 1$ if $[g]\leq \delta_{1}.$

Setting $\delta_{2}=\min\{\delta_{1}, \frac{1}{2 C(1+C_{2})^{2}}\}$.
Let   $[g]\leq \delta_{2}$. Then $u_{j}^{(l)}(j=1, \infty)$
converges to $u_{j}(j=1, \infty)$ in the sense
\begin{eqnarray}
  \{u_{1}^{(l)}, u_{\infty}^{(l)}\}\rightarrow \{u_{1}, u_{\infty}\} \quad in \quad Z(0, T).\label{uinfty}
\end{eqnarray}
And one can easily verify that $\{u_{1}, u_{\infty}\}$ is a solution of (\ref{gl01}) satisfying
$u_{j}(0)=u_{j}(T)(j=1, \infty).$

 We thus complete the proof of Proposition \ref{prop804}.
\end{proof}

\begin{prop}\label{prop805}
Assume that $s\in\mathbb{R}$
and $U_{0}=U_{01}+U_{0\infty},$ where
$\operatorname{supp}\hat{U}_{01}\subset  \{|\xi|\leq r_{\infty}\}$
, $\|U_{01}\|_{L^{2}}+\|x\nabla U_{01}\|_{L^{2}}<\infty$
and $U_{0\infty}\in X_{\infty}^{2}.$
If there is a sufficiently small constant $\delta_{3}$ such that
\begin{equation*}
  M(U_{01},U_{0\infty},g)\equiv
  \|U_{01}\|_{L^{2}}+\|x\nabla U_{01}\|_{L^{2}}
  +\|U_{0\infty}\|_{H_{1}^{2}}+[g]\leq \delta_{3}
\end{equation*}
then there is a constant $C_{4}$ such that there uniquely exists a solution $\{U_{1}, U_{\infty}\}$
to  system (\ref{gl01}) on $[s,s+T]$ in $B_{Z(s, s+T)}(C_{4}M(U_{01},U_{0\infty},g))$
satisfying $U_{j}|_{t=0}=U_{0j}(j=1, \infty).$
The uniqueness holds in $B_{Z(s, s+T)}(C_{4}\delta_{3}).$
\end{prop}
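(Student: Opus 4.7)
The strategy is to adapt the iteration scheme used in the proof of Proposition \ref{prop804}, replacing the periodicity condition $u_j(0)=u_j(T)$ with the initial condition $U_j|_{t=s}=U_{0j}$. By the time translation invariance of the equation, it suffices to treat the case $s=0$; the general case follows by replacing $t$ with $t-s$ throughout. Accordingly I would set
\begin{equation*}
U_1^{(0)}(t)=\mathrm{e}^{-tA}U_{01},\qquad U_\infty^{(0)}(t)=\mathrm{e}^{-tA}U_{0\infty},
\end{equation*}
and inductively
\begin{equation*}
U_j^{(l)}(t)=\mathrm{e}^{-tA}U_{0j}+\int_0^t\mathrm{e}^{-(t-\tau)A}F_j\bigl(U^{(l-1)},g\bigr)\,\mathrm{d}\tau,\qquad j=1,\infty,
\end{equation*}
where $U^{(l-1)}=U_1^{(l-1)}+U_\infty^{(l-1)}$. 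The symmetry $U_{0j}(-x)=-U_{0j}(x)$ inherited from $F_j$ and the supports of $U_{0j}$ are preserved under each iterate.

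The first step is to control the homogeneous pieces in the $Z(0,T)$ norm using the semigroup bounds already established. For the low-frequency part, inequalities (\ref{in1})--(\ref{in3}) of Proposition \ref{myprop2} yield
\begin{equation*}
\|\mathrm{e}^{-tA}U_{01}\|_{X(0,T)}\leq C\bigl(\|U_{01}\|_{L^2}+\|x\nabla U_{01}\|_{L^2}\bigr),
\end{equation*}
while for the high-frequency part, claim 1 of Proposition \ref{prop605} together with the energy estimate (\ref{inhigh}) gives
\begin{equation*}
\|\mathrm{e}^{-tA}U_{0\infty}\|_{Y(0,T)}\leq C\|U_{0\infty}\|_{H_1^2}.
\end{equation*}
Combining these with the Duhamel-term bounds from (\ref{in4})--(\ref{in5}) and claim 2 of Proposition \ref{prop605}, and applying the nonlinear estimates (\ref{f1ug})--(\ref{f2ug}) from Proposition \ref{prop801}, one obtains constants $C_1',C_2'>0$ independent of $l$ and the data such that
\begin{equation*}
\|\{U_1^{(l+1)},U_\infty^{(l+1)}\}\|_{Z(0,T)}\leq C_2'\,M(U_{01},U_{0\infty},g)+C_1'\,\|\{U_1^{(l)},U_\infty^{(l)}\}\|_{Z(0,T)}^{3}.
\end{equation*}

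The remainder mirrors the proof of Proposition \ref{prop804} verbatim. An induction argument exactly like the one leading to (\ref{u1l12}) shows that, after choosing $\delta_3$ small enough, the iterates are uniformly bounded by $C_4 M(U_{01},U_{0\infty},g)$ with $C_4=1+C_2'$. Then, applying (\ref{f3ug})--(\ref{f4ug}) to the differences $U_j^{(l+1)}-U_j^{(l)}$, which satisfy (\ref{gl01}) with zero initial data and right-hand side $F_j(U^{(l)},g)-F_j(U^{(l-1)},g)$, produces
\begin{equation*}
\|\{U_1^{(l+1)}-U_1^{(l)},U_\infty^{(l+1)}-U_\infty^{(l)}\}\|_{Z(0,T)}\leq C_3'M^2\,\|\{U_1^{(l)}-U_1^{(l-1)},U_\infty^{(l)}-U_\infty^{(l-1)}\}\|_{Z(0,T)},
\end{equation*}
and a second shrinking of $\delta_3$ so that $C_3'\delta_3^2\leq 1/2$ makes this a strict contraction. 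The Cauchy sequence converges in $Z(0,T)$ to a solution $\{U_1,U_\infty\}$ of (\ref{gl01}) with $U_j|_{t=0}=U_{0j}$. Uniqueness in the larger ball $B_{Z(0,T)}(C_4\delta_3)$ follows by subtracting any two such solutions and invoking the same contraction estimate.

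The main obstacle is simply verifying that the homogeneous semigroup bound $\|\mathrm{e}^{-tA}U_{01}\|_{X(0,T)}$ correctly absorbs the full $X$-norm, in particular the part $\|\partial_t U_1^{(0)}\|_{L^2(0,T;L_1^2)}$ coming from (\ref{in3}); this is precisely the place where the weighted seminorm $\|x\nabla U_{01}\|_{L^2}$ of the initial data enters the estimate for $M$. Once this bookkeeping is in place, the contraction mechanism is essentially identical to that of Proposition \ref{prop804}, and the proof is complete.
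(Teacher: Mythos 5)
Your proposal is correct and follows essentially the same route as the paper, which itself only remarks that Proposition \ref{prop805} is proved by the same iteration argument as Proposition \ref{prop804}; you have merely filled in the expected details (Duhamel formula with initial data in place of the periodicity operator $(1-\mathrm{e}^{-TA})^{-1}$, homogeneous semigroup bounds from Propositions \ref{myprop2} and \ref{prop605}, nonlinear estimates from Proposition \ref{prop801}, and the contraction step). No substantive discrepancy with the paper's intended argument.
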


\begin{proof}
Using iteration argument, the proof of Proposition \ref{prop805}
is similar to the proof of Proposition \ref{prop804} and we omit the details.
\end{proof}
\textbf{Proof of Theorem \ref{thm1}:}
Having Propositions \ref{prop804} and  \ref{prop805} in hand, we
can use similar method as that in \cite{KageiTsuda2015} to prove
Theorem \ref{thm1}, we present the details here for the reader's convenience.

First, according to Proposition \ref{prop804}, one derives that if $[g]\leq \delta_{2},$
then equation (\ref{gl01}) admits a unique solution
$\{u_{1}^{(0)},u_{\infty}^{(0)}\}\in B_{Z(0, T)}(C_{3}[g])$
satisfying $u_{1}^{(0)}(0)=u_{1}^{(0)}(T)$ and $u_{\infty}^{(0)}(0)=u_{\infty}^{(0)}(T)$.
This implies that
\begin{equation}\label{0810}
  \sup_{t\in [0,T]}\left(
  \|u_{1}^{(0)}\|_{H^{1}_{1}}+\|u_{\infty}^{(0)}\|_{H^{2}_{1}}
  \right)\leq C_{3}[g].
\end{equation}
Let $[g]\leq \frac{\delta_{3}}{ C_{3}+1}$, then  Proposition \ref{prop805}
allows us to conclude that equation (\ref{gl01}) admits a unique solution
$\{u_{1}^{(1)},u_{\infty}^{(1)}\}\in B_{Z(T, 2T)}(C_{4}(C_{3}+1)[g])$
satisfying $u_{1}^{(1)}(T)=u_{1}^{(0)}(T)=u_{1}^{(0)}(0)$
and $u_{\infty}^{(1)}(T)=u_{\infty}^{(0)}(T)=u_{\infty}^{(0)}(0).$

Next, we define $v_{j}^{(1)}(j=1,\infty)$ and $v^{(1)}$  for  $t\in[0,T]$  by
\begin{equation*}
  v_{j}^{(1)}(t)=u_{j}^{(1)}(t+T),\quad v^{(1)}(t)=v_{1}^{(1)}(t)+v_{\infty}^{(1)}(t).
\end{equation*}
The it is easy to see that equation (\ref{gl01}) is satisfied with
$v_{j}^{(1)}(j=1,\infty)$ instead of  $u_{j}$ and $v^{(1)}$ instead of $u.$
And the initial data are $v_{j}^{(1)}(0)=u_{j}^{(0)}(0).$

Let $[g]\leq \min\{\delta_{2}, \frac{C_{4}\delta_{3}}{C_{3}}, \frac{\delta_{3}}{C_{3}+1}\}$.
Then $ v_{j}^{(1)}(t)=u_{j}^{(0)}(t)(j=1,\infty)$ for $t\in[0,T]$ due to uniqueness.
This yields $u_{j}^{(1)}(t)=u_{j}^{(0)}(t-T)$ for $t\in [T, 2T].$

For $j=1, \infty$, define
\begin{eqnarray*}
  u_{j}(t)
  =\begin{cases}
  u_{j}^{(0)}(t), \text{$t\in [0,T]$,} \\
  u_{j}^{(1)}(t), \text{$t\in [T,2T]$.}
  \end{cases}
\end{eqnarray*}
Then for $t\in [0,T],$ one finds $u_{j}(t+T)=u_{j}(t).$
Proposition \ref{prop805} and (\ref{0810})
allow us to conclude that
there uniquely exists a solution
$\{w_{1},w_{\infty}\}\in B_{Z(T/2, 3T/2)}(C_{4}(C_{3}+1)[g])$
to equation (\ref{gl01}) on $[T/2, 3T/2]$
satisfying $w_{j}(T/2)=u_{j}^{(0)}(T/2)$
and uniqueness yields $w_{j}=u_{j}$ on $[T/2, 3T/2]$.
This shows that $\{u_{1}, u_{\infty}\}$ satisfies equation (\ref{gl01})
in $Z(0, 2T).$
We then can obtain solution $\{u_{1}, u_{\infty}\}$ to (\ref{gl01})
in $Z_{per}(\mathbb{R})$ satisfying
 $\|\{u_{1}, u_{\infty}\}\|_{Z(0, T)}\leq C_{3}[g]$
by repeating this argument on $[mT, (m+1)T]$ for $m=\pm1, \pm2,\cdots.$
The periodic solution of (\ref{gl}) is then obtained by setting $u=u_{1}+u_{\infty}.$
The proof of the uniqueness part is due to the iteration argument, we omit the details.
We thus complete the proof of Theorem \ref{thm1}.

\section{Proof of Theorem \ref{thm2}}

The existence of the unique global solution stated in
Theorem \ref{thm2}
can be proved
using similar method as that in \cite{Kawashita2002},
we omit the details and just show the decay rates.
The method is due to \cite{Okita2014}.

Applying $P_{1}$ and $P_{\infty}$ to (\ref{gldiff}), respectively, leads to
\begin{equation}\label{lowdiff}
  \partial_{t}w_{1}-(1+i)\Delta w_{1}
  =P_{1}(2|v_{per}|^{2}w+v_{per}^{2}\bar{w}+|w|^{2}w+|w|^{2}v_{per})
\end{equation}
and
\begin{equation}\label{highdiff}
   \partial_{t}w_{\infty}-(1+i)\Delta w_{\infty}
  =P_{\infty}(2|v_{per}|^{2}w+v_{per}^{2}\bar{w}+|w|^{2}w+|w|^{2}v_{per}).
\end{equation}

Define
\begin{eqnarray*}
  &&N_{1}(t) \equiv \sup_{0\leq \tau\leq t}
  [(1+\tau)^{3/4}\|w_{1}(\tau)\|_{L^{2}}+(1+\tau)^{5/4}\|\nabla w_{1}(\tau)\|_{L^{2}}],\\
  &&N_{2}(t) \equiv \sup_{0\leq \tau\leq t}
  (1+\tau)^{5/4}\| w_{\infty}(\tau)\|_{H^{1}}, \\
  &&N(t)\equiv N_{1}(t)+ N_{2}(t).
\end{eqnarray*}

To prove Theorem \ref{thm2}, we first show the following two
Lemmas.

For the low frequency part, we will show
\begin{lem}\label{lemstability1}
There exists a $\epsilon>0$ such that if
$[g]\leq \epsilon$ and $N(t)\leq 1$ for $t\in[0, T]$.
Then there is a constant $C_{5}$ independent of $T$ such that
\begin{equation}\label{lowinequality}
  N_{1}(t)\leq C\|w_{0}\|_{L^{1}}
  +C\epsilon N(t)+CN^{2}(t)
\end{equation}
for $t\in[0,T]$.
\end{lem}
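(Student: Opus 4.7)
I would apply Duhamel's principle to (7.2): writing
\begin{equation*}
G := 2|v_{per}|^2 w + v_{per}^2 \bar w + |w|^2 w + |w|^2 v_{per},
\end{equation*}
one has $w_1(t) = e^{-tA}P_1 w_0 + \int_0^t e^{-(t-s)A}P_1 G(s)\,ds$, and the goal is to control $(1+t)^{3/4}\|w_1(t)\|_{L^2}$ and $(1+t)^{5/4}\|\nabla w_1(t)\|_{L^2}$ uniformly in $t\in[0,T]$. The key linear input is the heat-kernel-type bound on the low-frequency semigroup: the Fourier representation $\widehat{e^{-tA}P_1 f}(\xi) = \hat\chi_1(\xi)e^{-(1+i)t|\xi|^2}\hat f(\xi)$, combined with Plancherel, $|\hat f(\xi)|\leq \|f\|_{L^1}$, and a direct evaluation of $\bigl(\int_{|\xi|\leq r_\infty}|\xi|^{2l}e^{-2t|\xi|^2}d\xi\bigr)^{1/2}$ in $\mathbb{R}^3$, gives
\begin{equation*}
\|\nabla^l e^{-tA}P_1 f\|_{L^2}\leq C(1+t)^{-3/4-l/2}\|f\|_{L^1}, \qquad l=0,1.
\end{equation*}
Applied to $e^{-tA}P_1 w_0$, this immediately produces the $C\|w_0\|_{L^1}$ contribution.

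\textbf{Nonlinear estimates.} From Theorem 1.1, via the $Z$-norm control, Lemma 2.1 and the low-frequency embedding $\|v_{per,1}\|_{H^2}\leq C\|v_{per,1}\|_{L^2}$ of Lemma 2.2, one obtains $\|v_{per}\|_{L^\infty(\mathbb{R};L^p\cap L^\infty)}\leq C[g]\leq C\epsilon$ for $2\leq p\leq\infty$. Together with the decay rates built into $N(t)$, namely $\|w(s)\|_{L^2}\leq CN(s)(1+s)^{-3/4}$ and $\|\nabla w(s)\|_{L^2}\leq CN(s)(1+s)^{-5/4}$, H\"older and Gagliardo-Nirenberg yield
\begin{equation*}
\||w|^2 w\|_{L^1}\leq CN^3(s)(1+s)^{-3},\qquad \||w|^2 v_{per}\|_{L^1}\leq C\epsilon N^2(s)(1+s)^{-3/2}.
\end{equation*}
Convolving these against the kernel $(1+t-s)^{-3/4-l/2}$, with the standard split at $s=t/2$, delivers contributions of order $CN^2(t)(1+t)^{-3/4-l/2}$ and $C\epsilon N(t)(1+t)^{-3/4-l/2}$ (using $N(t)\leq 1$), which after taking suprema feed exactly into the $CN^2(t)$ and $C\epsilon N(t)$ terms of the conclusion.

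\textbf{Main obstacle.} The delicate piece is the linear-in-$w$ term $|v_{per}|^2 w$. A direct estimate $\||v_{per}|^2 w\|_{L^1}\leq C\|v_{per}\|_{L^4}^2\|w\|_{L^2}\leq C\epsilon^2 N(s)(1+s)^{-3/4}$ followed by the $L^1\to L^2$ convolution produces only $(1+t)^{-1/2}$ decay, falling short of the required $(1+t)^{-3/4}$. Since $v_{per}$ is time-periodic and carries no decay of its own, the standard convolution trick is insufficient, and the resolution I expect is to absorb the linear-in-$w$ terms $2|v_{per}|^2 w + v_{per}^2\bar w$ into the linear propagator rather than treating them as source. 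Concretely, one iterates Duhamel by substituting the whole expression for $w$ back into the linear piece and summing the resulting geometric series in $\epsilon$: for $\epsilon$ small, the series converges in the appropriate weighted norms and produces the correct $(1+t)^{-3/4-l/2}$ decay with an overall prefactor of order $\epsilon^2 N(t)$. One factor of $\epsilon$ is retained as the prefactor in $C\epsilon N(t)$, while the other is absorbed by choosing $\epsilon$ small. Repeating the gradient estimate with the sharper kernel $(1+t-s)^{-5/4}$ (which only improves the convolution), taking $\sup_{\tau\leq t}$ of both weighted quantities, and summing yields
\begin{equation*}
N_1(t)\leq C\|w_0\|_{L^1} + C\epsilon N(t) + CN^2(t).
\end{equation*}
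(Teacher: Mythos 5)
Your linear estimate for $e^{-tA}P_1w_0$, and your treatment of the cubic terms $|w|^2w$ and $|w|^2v_{per}$, match the paper's proof. You also correctly identify the real difficulty: the terms $2|v_{per}|^2w+v_{per}^2\bar w$ are linear in $w$ and $v_{per}$ carries no time decay, so the naive bound $C\epsilon^2\|w\|_{L^2}\lesssim \epsilon^2(1+s)^{-3/4}N(s)$ convolved with $(1+t-s)^{-3/4-l/2}$ only yields $(1+t)^{-1/2}$. However, your proposed resolution is a genuine gap. Iterating Duhamel and summing a ``geometric series in $\epsilon$'' does not produce uniform-in-time decay: each substitution costs another convolution of $(1+t-s)^{-3/4}$ against a source whose decay has already been degraded, and the standard convolution estimates give successively $(1+t)^{-1/2}$, $(1+t)^{-1/4}$, $(1+t)^{0}$, and then iterates that actually grow in $t$. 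So the $n$-th term of your series is of size roughly $(C\epsilon^{2})^{n}$ times a power of $(1+t)$ that increases with $n$; the series is not summable uniformly in $t$, and the claimed prefactor ``$\epsilon^{2}N(t)$ with the correct decay'' is unjustified. Making the idea of absorbing these terms into the propagator rigorous would require decay estimates for the non-autonomous semigroup generated by $\partial_t-(1+i)\Delta-2|v_{per}|^2\cdot-v_{per}^2\bar\cdot$ (a Floquet-type analysis), which you do not supply.

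The paper's resolution is different and is precisely the reason the periodic solution was built in weighted Sobolev spaces: since Theorem \ref{thm1} gives $\|(1+|x|)v_{per}\|_{L^2}\leq C[g]\leq C\epsilon$, one writes
\begin{equation*}
\bigl\||v_{per}|^2w\bigr\|_{L^1}\leq \|v_{per}\|_{L^{\infty}}\,\|(1+|x|)v_{per}\|_{L^2}\,\Bigl\|\tfrac{w}{1+|x|}\Bigr\|_{L^2}
\leq C\epsilon^{2}\|\nabla w\|_{L^2}\leq C\epsilon^{2}(1+s)^{-5/4}N(s),
\end{equation*}
using the Hardy inequality (Lemma \ref{lemhardy}) to trade the spatial weight on $v_{per}$ for a gradient on $w$, and similarly for $v_{per}^2\bar w$. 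The source now decays like $(1+s)^{-5/4}$, which is more than integrable, so the convolution with $(1+t-s)^{-3/4-l/2}$ returns the full rate $(1+t)^{-3/4-l/2}$ with prefactor $C\epsilon N(t)$ (indeed $\epsilon^2$), and no iteration of the propagator is needed. Without invoking the weighted information on $v_{per}$, your argument as written does not close.
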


\begin{proof}
The solution of (\ref{lowdiff}) can be written as
\begin{equation}\label{stable0}
  w_{1}(t)=\mbox{e}^{-(1+i)t\Delta}w_{10}
  +\int_{0}^{t}\mbox{e}^{-(1+i)(t-s)\Delta}
  P_{1}(2|v_{per}|^{2}w+v_{per}^{2}\bar{w}+|w|^{2}w+|w|^{2}v_{per})(s)\mbox{d}s,
\end{equation}
where $w_{10}=P_{1}w_{0}.$

Invoking Plancherel theorem yields
\begin{eqnarray}\label{stable1}
  &&\|\nabla^{l}P_{1}w_{0}\|_{L^{2}}\nonumber\\
  &&\leq
  \left[\int_{|\xi|\leq r_{\infty}}|\xi|^{2l}\mbox{e}^{-|\xi|^{2}}|\hat{w}_{0}(\xi)|^{2}\mbox{d}\xi\right]^{1/2}\nonumber\\
  &&\leq
  \min\left\{
  \left[\int_{|\xi|\leq r_{\infty}}|\xi|^{2l}\mbox{e}^{-t|\xi|^{2}}|\hat{w}_{0}(\xi)|^{2}\mbox{d}\xi\right]^{1/2},
  \left[\int_{|\xi|\leq 1}\mbox{e}^{-t|\xi|^{2}}|\hat{w}_{0}(\xi)|^{2}\mbox{d}\xi\right]^{1/2}
  \right\}\nonumber\\
  &&\leq
  \min\left\{C\|\hat{w}_{0}\|_{L^{\infty}}
  \left[\int_{\mathbb{R}^{3}}|\xi|^{2l}\mbox{e}^{-t|\xi|^{2}}\mbox{d}\xi\right]^{1/2},
  C\|\hat{w}_{0}\|_{L^{\infty}}\left[\int_{|\xi|\leq 1}\mbox{e}^{-t|\xi|^{2}}\mbox{d}\xi\right]^{1/2}
  \right\}\nonumber\\
  &&\leq
  \min\left\{C\|\hat{w}_{0}\|_{L^{\infty}}
  \left[\int_{\mathbb{R}^{3}}t^{-l}|\theta|^{2l}\mbox{e}^{-|\theta|^{2}}t^{-3/2}\mbox{d}\theta\right]^{1/2},
  C\|\hat{w}_{0}\|_{L^{\infty}}
  \right\}\nonumber\\
  &&\leq C(1+t)^{-(3/4+l/2)}\|w_{0}\|_{L^{1}}.
\end{eqnarray}
Using Lemma \ref{lemhardy}, H\"{o}lder inequality and Theorem \ref{thm1}, it finds
\begin{eqnarray}\label{stable2}
&&\|2|v_{per}|^{2}w+v_{per}^{2}\bar{w}+|w|^{2}w+|w|^{2}v_{per}\|_{L^{1}}\nonumber\\
&&\leq C\|v_{per}\|_{L^{\infty}}
\|(1+|x|)v_{per}\|_{L^{2}}\|\frac{1}{(1+|x|)}w\|_{L^{2}}
+\|w\|_{L^{3}}^{3}
+\|v_{per}\|_{L^{3}}\|w\|_{L^{2}}\|w\|_{L^{6}}\nonumber\\
&&\leq
C\epsilon^{2}(\|\nabla w_{1}\|_{L^{2}}+\|\nabla w_{\infty}\|_{L^{2}})
+C(\|w_{1}\|_{L^{2}}+\|w_{\infty}\|_{L^{2}})^{3/2}(\|\nabla w_{1}+\nabla w_{\infty}\|_{L^{2}})^{3/2}\nonumber\\
&&\qquad+C\epsilon(\|w_{1}\|_{L^{2}}+\|w_{\infty}\|_{L^{2}})(\|\nabla w_{1}+\nabla w_{\infty}\|_{L^{2}})\nonumber\\
&&\leq C\epsilon(1+t)^{-5/4}N(t)
+C(1+t)^{-3}N^{3}(t)
+C\epsilon(1+t)^{-2}N^{2}(t).
\end{eqnarray}
Applying $\nabla^{l}$ to (\ref{stable0}) and employing (\ref{stable1})(\ref{stable2}) leads to
\begin{eqnarray}\label{stable3}
&&\|\nabla^{l}w_{1}(\tau)\|_{L^{2}}\nonumber\\
&&\leq C(1+\tau)^{-(3/4+l/2)}\|w_{0}\|_{L^{1}}\nonumber\\
&&\quad+C\int_{0}^{t}(1+\tau-s)^{-(3/4+l/2)}
\|2|v_{per}|^{2}w+v_{per}^{2}\bar{w}+|w|^{2}w+|w|^{2}v_{per}\|_{L^{1}}\mbox{d}s\nonumber\\
&&\leq C(1+\tau)^{-(3/4+l/2)}\|w_{0}\|_{L^{1}}\nonumber\\
&&\quad+C\int_{0}^{\tau}(1+\tau-s)^{-(3/4+l/2)}[\epsilon(1+s)^{-5/4}N(s)\nonumber\\
&&\quad\quad\quad\quad+(1+s)^{-3}N^{3}(s)
+\epsilon(1+s)^{-2}N^{2}(s)]\mbox{d}s\nonumber\\
&&\leq C(1+\tau)^{-(3/4+l/2)}\|w_{0}\|_{L^{1}}\nonumber\\
&&\quad+C\epsilon(1+\tau)^{-(3/4+l/2)}N(t)+C(1+\tau)^{-(3/4+l/2)}N^{2}(t)
\end{eqnarray}
for $\tau\in [0,t].$ This yields (\ref{lowinequality})
and we complete the proof of Lemma \ref{lemstability1}.
\end{proof}

For the high frequency part, we will show

\begin{lem}\label{lemstability2}
There exists a $\epsilon>0$ such that if
$[g]\leq \epsilon$ and $N(t)\leq 1$ for $t\in[0, T]$.
Then we have
\begin{eqnarray}\label{highinequality}
  &&\frac{1}{2}\frac{\mbox{d}}{\mbox{d}t}\|w_{\infty}\|_{H^{1}}^{2}
  +(d_{3}\|  w_{\infty}\|_{H^{1}}^{2})
  +(d_{2}\| \nabla w_{\infty}\|_{H^{1}}^{2})\nonumber\\
  &&\leq C\epsilon(1+t)^{-5/2}N^{2}(t)
 +C(1+t)^{-5}N^{4}(t)
  +C(1+t)^{-5/2}N^{2}(t)\|\nabla w_{\infty}\|_{H^{1}}^{2}.
\end{eqnarray}
for $t\in[0,T]$ and some constant $d_{2}, d_{3}$ large enough.
\end{lem}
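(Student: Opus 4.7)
The plan is to carry out a straightforward $H^1$ energy estimate on the high frequency equation (7.2), using Lemma \ref{lem404}(ii) to convert part of the dissipation into a coercive zero-order term, and then to carefully split the nonlinear right-hand side into pieces that can be bounded using the decay rates encoded in $N(t)$ together with the smallness $\|v_{per}\|_{H^2}\leq C[g]\leq C\epsilon$ coming from Theorem \ref{thm1}.

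More concretely, I would first multiply (\ref{highdiff}) by $\bar{w}_{\infty}$, integrate over $\mathbb{R}^{3}$, take the real part, and use Lemma \ref{lem404}(i) to bound $P_{\infty}$; this produces
\begin{equation*}
\frac{1}{2}\frac{\md}{\md t}\|w_{\infty}\|_{L^{2}}^{2}+\|\nabla w_{\infty}\|_{L^{2}}^{2}
\leq \Big|\mathrm{Re}\int_{\mathbb{R}^{3}} P_{\infty}\mathcal{N}\,\bar{w}_{\infty}\,\md x\Big|,
\end{equation*}
where $\mathcal{N}=2|v_{per}|^{2}w+v_{per}^{2}\bar{w}+|w|^{2}w+|w|^{2}v_{per}$. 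Next I apply $\nabla$ to (\ref{highdiff}) and take the $L^{2}$ inner product with $\nabla\bar{w}_{\infty}$ (integrating by parts on the nonlinear term to put the extra derivative on $w_{\infty}$), obtaining the analogous inequality with $\|\nabla w_{\infty}\|_{L^{2}}^{2}$ on the left and $\|\nabla^{2}w_{\infty}\|_{L^{2}}^{2}$ as dissipation. Adding the two and invoking Lemma \ref{lem404}(ii), which gives $\|w_{\infty}\|_{L^{2}}\leq C\|\nabla w_{\infty}\|_{L^{2}}$, a standard convex combination of the two dissipative terms yields the desired LHS $\frac{1}{2}\frac{\md}{\md t}\|w_{\infty}\|_{H^{1}}^{2}+d_{3}\|w_{\infty}\|_{H^{1}}^{2}+d_{2}\|\nabla w_{\infty}\|_{H^{1}}^{2}$.

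The bulk of the work is then to estimate the nonlinear integrals $\int P_{\infty}\mathcal{N}\,\bar{w}_{\infty}\,\md x$ and $\int \nabla P_{\infty}\mathcal{N}\cdot\nabla\bar{w}_{\infty}\,\md x$. I would treat $\mathcal{N}$ term by term:
\begin{enumerate}
\item For the quadratic-in-$v_{per}$ linear pieces $|v_{per}|^{2}w$ and $v_{per}^{2}\bar{w}$, I use $\|v_{per}\|_{L^{\infty}}\leq C\|v_{per}\|_{H^{2}}\leq C\epsilon$ (Lemma \ref{lem201} combined with Theorem \ref{thm1}), pairing $v_{per}$-factors with $w_{\infty}$ and $\nabla w_{\infty}$ via H\"{o}lder and the Hardy inequality (Lemma \ref{lemhardy}), which together with $\|\nabla w\|_{L^{2}}\leq C(1+t)^{-5/4}N(t)$ will produce the contribution $C\epsilon(1+t)^{-5/2}N^{2}(t)$.
\item For the cubic term $|w|^{2}w$, I estimate $\||w|^{2}w\|_{L^{2}}\leq \|w\|_{L^{\infty}}^{2}\|w\|_{L^{2}}$ and use $\|w\|_{L^{\infty}}\leq C\|\nabla w\|_{H^{1}}\leq C(1+t)^{-5/4}N(t)$ together with $\|w\|_{L^{2}}\leq C(1+t)^{-3/4}N(t)$; the resulting bound after pairing with $\nabla w_{\infty}$ gives $C(1+t)^{-5}N^{4}(t)$.
\item For the mixed quadratic $|w|^{2}v_{per}$, similar H\"{o}lder / Sobolev estimates give a contribution that can be absorbed into either the $\epsilon(1+t)^{-5/2}N^{2}$ term (using one factor $\|v_{per}\|_{L^{\infty}}\leq C\epsilon$) or into the $(1+t)^{-5/2}N^{2}\|\nabla w_{\infty}\|_{H^{1}}^{2}$ term via Young's inequality.
\end{enumerate}
Any leftover top-order piece $\|\nabla^{2}w_{\infty}\|_{L^{2}}^{2}$ coming from the integration by parts in the gradient estimate is absorbed into $d_{2}\|\nabla w_{\infty}\|_{H^{1}}^{2}$ on the left by choosing the Young parameter small.

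The main difficulty, as usual in this kind of weighted decay analysis, is the careful bookkeeping needed to match exactly the decay exponents on the right-hand side. The linear-in-$w$ coupling terms $v_{per}^{2}w$ are the most delicate: a naive Cauchy--Schwarz bound produces $C\epsilon^{2}(1+t)^{-2}N^{2}(t)$, which is too weak, and to recover the sharper $C\epsilon(1+t)^{-5/2}N^{2}(t)$ one must use Hardy's inequality (Lemma \ref{lemhardy}) to replace one $w$ factor by $\nabla w$, thereby converting a factor of $\|w\|_{L^{2}}$ into the faster-decaying $\|\nabla w\|_{L^{2}}$. The cubic and quadratic terms are more routine, but the absorbable term $C(1+t)^{-5/2}N^{2}(t)\|\nabla w_{\infty}\|_{H^{1}}^{2}$ must be kept separate (rather than Young-ed to the left immediately) so that it can be absorbed only after integrating in time in the subsequent closing argument where $N(t)$ is shown to remain small.
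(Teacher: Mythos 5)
Your overall strategy is the same as the paper's: an $H^{1}$ energy estimate on (\ref{highdiff}) in which the gradient-level nonlinear term is integrated by parts so that it is paired with $\nabla^{2}w_{\infty}$, Lemma \ref{lem404}(ii) to generate the zero-order coercive term $d_{3}\|w_{\infty}\|_{H^{1}}^{2}$, an $L^{2}$ bound on $2|v_{per}|^{2}w+v_{per}^{2}\bar{w}+|w|^{2}w+|w|^{2}v_{per}$ using the smallness of $v_{per}$ from Theorem \ref{thm1} and a Sobolev trade of $w$ for $\nabla w$ (to upgrade the decay $(1+t)^{-3/4}$ to $(1+t)^{-5/4}$), and Young's inequality, keeping the term $C(1+t)^{-5/2}N^{2}(t)\|\nabla w_{\infty}\|_{H^{1}}^{2}$ un-absorbed. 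The paper implements this by bounding the nonlinearity by $C\|v_{per}\|_{L^{6}}^{2}\|w\|_{L^{6}}+C\|w\|_{L^{6}}^{3}+C\|v_{per}\|_{L^{6}}\|w\|_{L^{6}}^{2}$ and using $\|w\|_{L^{6}}\leq C\|\nabla w\|_{L^{2}}\leq C(1+t)^{-5/4}N(t)$.

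There is, however, one step in your plan that fails as written: for the cubic term you use $\|w\|_{L^{\infty}}\leq C\|\nabla w\|_{H^{1}}\leq C(1+t)^{-5/4}N(t)$. The second inequality is not available, because $N(t)$ controls only $\|w_{1}\|_{L^{2}}$, $\|\nabla w_{1}\|_{L^{2}}$ and $\|w_{\infty}\|_{H^{1}}$; it does not control $\|\nabla^{2}w_{\infty}\|_{L^{2}}$, which enters $\|\nabla w\|_{H^{1}}$ and hence $\|w_{\infty}\|_{L^{\infty}}$ via Lemma \ref{lem201}. In the closing bootstrap the second derivatives of $w_{\infty}$ are only controlled through the time-integrated dissipation, never pointwise in time with a decay rate, so an estimate of the nonlinearity through $\|w\|_{L^{\infty}}$ cannot be closed in terms of $N(t)$ alone. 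The fix is exactly the paper's choice: $\||w|^{2}w\|_{L^{2}}=\|w\|_{L^{6}}^{3}\leq C\|\nabla w\|_{L^{2}}^{3}\leq C(1+t)^{-15/4}N^{3}(t)$, which after Young (and $N\leq1$) produces the stated $C(1+t)^{-5}N^{4}(t)$; note that your $L^{\infty}$--$L^{2}$ splitting, even if it were justified, would only give $(1+t)^{-13/4}N^{3}(t)$ before Young. A related caution for the $|v_{per}|^{2}w$, $v_{per}^{2}\bar{w}$ terms: at the energy level the Hardy-based pairing must be arranged so that $w_{\infty}$ is measured in $L^{2}$, $L^{3}$ or $L^{6}$ (all controlled by $N$), not in $L^{\infty}$ or in a weighted norm; the simplest route, and the one the paper takes, is plain H\"{o}lder with $\|v_{per}\|_{L^{6}}^{2}\|w\|_{L^{6}}\|\nabla^{2}w_{\infty}\|_{L^{2}}$, Hardy's inequality (Lemma \ref{lemhardy}) being needed only for the $L^{1}$ bound in the low-frequency Lemma \ref{lemstability1}.
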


\begin{proof}
The standard energy method yields
\begin{eqnarray}\label{stable4}
  &&\frac{1}{2}\frac{\mbox{d}}{\mbox{d}t}\|w_{\infty}\|_{H^{1}}^{2}+
  \| \nabla w_{\infty}\|_{H^{1}}^{2}\nonumber\\
  &&\leq \|2|v_{per}|^{2}w+v_{per}^{2}\bar{w}+|w|^{2}w+|w|^{2}v_{per}\|_{L^{2}}\|\nabla^{2}w_{\infty}\|_{L^{2}},
\end{eqnarray}
where integration by parts and Lemma \ref{lem404}(ii) have been employed.

Direct calculation leads to
\begin{eqnarray}\label{stable5}
&&\|2|v_{per}|^{2}w+v_{per}^{2}\bar{w}+|w|^{2}w+|w|^{2}v_{per}\|_{L^{2}}\nonumber\\
&&\leq C\|v_{per}\|_{L^{6}}^{2}\|w\|_{L^{6}}
+C\|w\|_{L^{6}}^{3}+C\|v_{per}\|_{L^{6}}\|w\|_{L^{6}}^{2}\nonumber\\
&&\leq C\epsilon^{2}(\|\nabla w_{1}\|_{L^{2}}+\|\nabla w_{\infty}\|_{L^{2}})
+C(\|\nabla w_{1}\|_{L^{2}}+\|\nabla w_{\infty}\|_{L^{2}})^{3}\nonumber\\
&&\quad\quad+C\epsilon(\|\nabla w_{1}\|_{L^{2}}+\|\nabla w_{\infty}\|_{L^{2}})^{2}\nonumber\\
&&\leq C\epsilon(1+t)^{-5/4}N(t)
+C(1+t)^{-15/4}N^{3}(t)
+C\epsilon(1+t)^{-5/2}N^{2}(t).
\end{eqnarray}
Substituting (\ref{stable5}) into (\ref{stable4}) and using Young inequality, one obtains
\begin{eqnarray}\label{stable6}
  &&\frac{1}{2}\frac{\mbox{d}}{\mbox{d}t}\|w_{\infty}\|_{H^{1}}^{2}+
  \| \nabla w_{\infty}\|_{H^{1}}^{2}\nonumber\\
  &&\leq C\epsilon[(1+t)^{-5/2}N^{2}(t)+\|\nabla^{2}w_{\infty}\|_{L^{2}}^{2}]\nonumber\\
  &&\quad+C(1+t)^{-5/2}N^{2}(t)[(1+t)^{-5/2}N^{2}(t)+\|\nabla^{2}w_{\infty}\|_{L^{2}}^{2}]\nonumber\\
  &&\quad+C\epsilon[(1+t)^{-5}N^{4}(t)+\|\nabla^{2}w_{\infty}\|_{L^{2}}^{2}].
\end{eqnarray}
(\ref{stable6}) and Lemma \ref{lem404}(ii) yield (\ref{highinequality}).
We thus complete the proof of Lemma \ref{lemstability2}.
\end{proof}

\textbf{Proof of Theorem \ref{thm2}:}

We will show that
if there is a constant $\epsilon_{2}>0$
small enough such that $\|w_{0}\|_{H^{1}\cap L^{1}}\leq \epsilon_{2},$
then $N(t)\leq C\|w_{0}\|_{H^{1}\cap L^{1}}$ for $t\in[0, T]$ with $c$ independent of $T.$
A bootstrap argument yields  $N(t)\leq D_{0}$ for all $t$ with $D_{0}$ a constant
independent of $t.$

First, (\ref{highinequality}) implies
\begin{eqnarray}\label{stable7}
  &&\|w_{\infty}\|_{H^{1}}^{2}
+ d_{2}\int_{0}^{t}\mbox{e}^{-d_{3}(t-\tau)}\|\nabla w_{\infty}\|_{H^{1}}^{2}\mbox{d}\tau\nonumber\\
&&\leq \mbox{e}^{-d_{3}t}\|w_{\infty0}\|_{H^{1}}^{2}
+C\epsilon N^{2}(t)\int_{0}^{t}\mbox{e}^{-d_{3}(t-\tau)}(1+\tau)^{-5/2}\mbox{d}\tau\nonumber\\
&&\quad\quad+CN^{4}(t)\int_{0}^{t}\mbox{e}^{-d_{3}(t-\tau)}(1+\tau)^{-5}\mbox{d}\tau\nonumber\\
&&\quad\quad+CN^{2}(t)\int_{0}^{t}\mbox{e}^{-d_{3}(t-\tau)}(1+\tau)^{-5/2}\|\nabla w_{\infty}\|_{H^{1}}^{2}(\tau)\mbox{d}\tau\nonumber\\
&&\leq \mbox{e}^{-d_{3}t}\|w_{\infty0}\|_{H^{1}}^{2}
+C\epsilon (1+t)^{-5/2}N^{2}(t)
+C(1+t)^{-5}N^{4}(t)\nonumber\\
&&\quad\quad+CN^{2}(t)\int_{0}^{t}\mbox{e}^{-d_{3}(t-\tau)}\|\nabla w_{\infty}\|_{H^{1}}^{2}(\tau)\mbox{d}\tau.
\end{eqnarray}
Setting $\mathcal{A}(t)\equiv (1+t)^{5/2}\int_{0}^{t}\mbox{e}^{-d_{3}(t-\tau)}\|\nabla w_{\infty}\|_{H^{1}}^{2}(\tau)\mbox{d}\tau.$
Then one has from (\ref{stable7}) that
\begin{eqnarray}\label{stable8}
  N_{2}^{2}(t)+d_{2}\mathcal{A}(t)
  \leq C[\|w_{\infty0}\|_{H^{1}}^{2}+\epsilon N^{2}(t)+N^{4}(t)+N^{2}(t)\mathcal{A}(t)].
\end{eqnarray}

Adding (\ref{stable8}) to (\ref{lowinequality}) and choosing $\epsilon$
sufficiently small, we obtain
\begin{eqnarray}\label{stable9}
  N^{2}(t)+d_{2}^{\prime}\mathcal{A}(t)
  \leq C_{5}[\|w_{0}\|_{H^{1}\cap L^{1}}^{2}+N^{4}(t)+N^{2}(t)\mathcal{A}(t)].
\end{eqnarray}

On the other hand, we note that there is a constant
$ C_{6}>0$ such that
\begin{equation*}
  N(0)\leq C_{6} \|w_{0}\|_{H^{1}\cap L^{1}}.
\end{equation*}
The continuity of $N(t)$ in $t$ yields that there is a $t_{0}>0$
such that
\begin{equation*}
  N(t)\leq 2C_{6} \|w_{0}\|_{H^{1}\cap L^{1}} \quad \text{for all $t\in[0,t_{0}]$.}
\end{equation*}
Set $C_{7}=\max\{\sqrt{C_{5}/2}, C_{6}\}.$

We will prove that $N(t)< 2C_{7}\|w_{0}\|_{H^{1}\cap L^{1}}$
for all $t\in[0,T]$ if $\epsilon_{2}$ is selected suitably small.

Suppose this is not the case.
Then there is a $t_{1}\in (t_{0},T]$ such that
\begin{equation*}
  N(t)<2C_{7}\|w_{0}\|_{H^{1}\cap L^{1}} \quad \text{for all $t\in[0,t_{1})$}
\end{equation*}
and
\begin{equation}\label{nt1}
  N(t_{1})=2C_{7}\|w_{0}\|_{H^{1}\cap L^{1}}.
\end{equation}

However, from (\ref{stable9}), we deduce
\begin{eqnarray*}
  N^{2}(t)+d_{2}^{\prime}\mathcal{A}(t)
 &\leq& C_{5}\|w_{0}\|_{H^{1}\cap L^{1}}^{2}
  +C_{5}N^{2}(t)[N^{2}(t)
  +\mathcal{A}(t)]\\
  &<& C_{5}\|w_{0}\|_{H^{1}\cap L^{1}}^{2}
  +\frac{1}{2}[N^{2}(t)
  +d_{2}^{\prime}\mathcal{A}(t)]\quad \text{for all $t\in[0,t_{1}]$}
\end{eqnarray*}
provided we choose
$\epsilon_{2}<\epsilon_{3}\equiv\min\left\{\frac{1}{\sqrt{8C_{5}C_{7}^{2}}}, \frac{\sqrt{d_{2}^{\prime}}}{\sqrt{8C_{5}C_{7}^{2}}}\right\}$.

The above inequality leads to
\begin{eqnarray*}
  N^{2}(t)+d_{2}^{\prime}\mathcal{A}(t)
  < 2C_{5}\|w_{0}\|_{H^{1}\cap L^{1}}^{2}\leq 4C_{7}^{2}\|w_{0}\|_{H^{1}\cap L^{1}}^{2} \quad \text{for all $t\in[0,t_{1}]$,}
\end{eqnarray*}
which contradicts with (\ref{nt1}).

We thus complete the proof of Theorem \ref{thm2}.

\textbf{Acknowledgement}

The research of B. Guo
is partially supported by the National Natural Science Foundation
of China, grant 11731014.


\end{document}